\newtheorem{theorem}{Theorem}[section]
\newtheorem{lemma}[theorem]{Lemma}
\newtheorem{proposition}[theorem]{Proposition}
\theoremstyle{definition}
\newtheorem{definition}[theorem]{Definition}
\theoremstyle{plain}
\theoremstyle{definition}
\theoremstyle{remark}
\newcommand{\topol}{{\text{\rm top}}}
\newcommand{\cA}{{\mathcal A}}
\newcommand{\cB}{{\mathcal B}}
\newcommand{\cM}{{\mathcal M}}
\newcommand{\cU}{{\mathcal U}}
\newcommand{\cQ}{{\mathcal Q}}
\newcommand{\cR}{{\mathcal R}}
\newcommand{\cS}{{\mathcal S}}
\newcommand{\Cb}{{\mathbb C}}
\newcommand{\Zb}{{\mathbb Z}}
\newcommand{\Eb}{{\mathbb E}}
\newcommand{\Rb}{{\mathbb R}}
\newcommand{\Nb}{{\mathbb N}}
\newcommand{\diam}{{\rm diam}}
\newcommand{\sL}{{\mathscr L}}
\newcommand{\Sym}{{\rm Sym}}
\newcommand{\Hom}{{\rm Hom}}
\newcommand{\ch}{{\bar{h}}}
\newcommand{\erg}{{\rm e}}
\newcommand{\Map}{{\rm Map}}
\begin{document}

\title[Soficity, amenability, and dynamical entropy]{Soficity, amenability, and dynamical entropy}

\author{David Kerr}
\author{Hanfeng Li}
\address{\hskip-\parindent
David Kerr, Department of Mathematics, Texas A{\&}M University,
College Station TX 77843-3368, U.S.A.}
\email{kerr@math.tamu.edu}

\address{\hskip-\parindent
Hanfeng Li, Department of Mathematics, SUNY at Buffalo,
Buffalo NY 14260-2900, U.S.A.}
\email{hfli@math.buffalo.edu}

\date{November 18, 2010}

\begin{abstract}
In a previous paper the authors developed an operator-algebraic approach to Lewis Bowen's
sofic measure entropy that yields invariants for actions of countable sofic groups
by homeomorphisms on a compact metrizable space and by measure-preserving transformations
on a standard probability space. We show here that these measure and topological entropy invariants
both coincide with their classical counterparts when the acting group is amenable.
\end{abstract}

\maketitle

\section{Introduction}

In \cite{Bowen10} Lewis Bowen introduced a notion of entropy for measure-preserving actions of
a countable discrete sofic group on a standard probability space admitting a generating finite partition.
By a limiting process the definition also applies more generally whenever there exists a generating
partition with finite entropy. The idea is to dynamically model a given finite partition by partitions
of a finite set on which the group acts in an approximate way according to the definition of soficity.
Given a fixed sequence of sofic approximations, the entropy is locally defined as the exponential
growth rate of the number of model partitions relative to the size of the finite sets
on which the sofic approximations operate. Taking an infimum over the parameters which control
the localization then defines the entropy of the original partition.
This quantity is then shown to take a common value over all generating
finite partitions. It may depend though on the choice of sofic approximation sequence, yielding
in general a collection of entropy invariants for the system. However,
in the case that the acting group is amenable and there exists a generating finite partition,
Bowen showed in \cite{Bowen10a} that sofic measure entropy coincides with classical Kolmogorov-Sinai entropy
for all choices of sofic approximation sequence.

Applying an operator algebra perspective, the present authors developed in \cite{KerLi10} an alternative
approach to sofic entropy that is more akin to Rufus Bowen's definition of topological entropy for
$\Zb$-actions in terms of $\varepsilon$-separated partial orbits. This approach furnishes both
measure and topological dynamical invariants for general actions, and these entropies are related by a
variational principle as in the classical case \cite[Sect.\ 6]{KerLi10}.
For measure-preserving actions admitting a generating
partition with finite entropy, our measure entropy coincides with Lewis Bowen's \cite[Sect.\ 3]{KerLi10}.

The goal of this paper is to prove that, when the acting group is amenable, the sofic measure and
topological entropies from \cite{KerLi10} both coincide with their classical counterparts,
independently of the sofic approximation sequence. In the measurable
case this generalizes Bowen's result from \cite{Bowen10a} by means of a complete different type of argument.
Once we have the result for measure entropy
the topological version ensues by combining the variational principle from \cite{KerLi10}
with the classical variational principle. We will also give a direct proof in the topological case
as it illustrates some of the basic ideas without the additional probabilistic complications that arise
in the treatment of measure-preserving actions.

In \cite{KerLi10} we took the operator algebra approach to defining sofic entropy because
it was crucial for showing that one actually obtains a conjugacy invariant in the measurable case.
However, for many purposes, including that of this paper, it is simpler to express
both topological and measure entropy in terms of the dynamics on the space itself,
as in Rufus Bowen's definition. In the measurable case this requires some topological
structure, namely a compact metrizable space on which the group acts continuously
with an invariant Borel probability measure. Since such topological models always exist,
there is no loss in generality in taking this viewpoint, which we will do in this paper.
We will therefore begin in Sections~\ref{S-topological defn} and \ref{S-measure defn} by
formulating the spatial definitions of sofic topological and measure entropy and establishing
their equivalence with the original linear definitions from \cite{KerLi10}.

The basis for our analysis of the amenable case is a
sofic approximation version of the Rokhlin lemma of Ornstein and Weiss,
which can be extracted from Ornstein and Weiss's proof \cite{OrnWei87}. This appeared
in Section~4 of \cite{Ele06} in a form that treats more generally the case of finite graphs. In our sofic approximation
situation we will need a stronger statement that allows us to prescribe the quasitiling coverage of the finite approximation
space and the set from which the tiling centres come. In Section~\ref{S-Rokhlin}
we will give a self-contained proof of this Rokhlin lemma for sofic approximations of countable discrete amenable
groups following the line of argument in \cite{OrnWei87}.
In Section~\ref{S-topological} we prove that the sofic and classical
topological entropies coincide for continuous actions of a countable discrete amenable group on a compact metrizable space.
Finally, in Section~\ref{S-measure} we show that the sofic measure entropy from \cite{KerLi10} and the
classical Kolmogorov-Sinai entropy coincide for measure-preserving actions of a countable discrete amenable group
on a standard probability space.

We round out the introduction with some terminology concerning amenable and sofic groups
and spanning and separated sets.
For general information on unital commutative $C^*$-algebras as used in this paper and any unexplained
notation and terminology see the introduction to \cite{KerLi10}. The classical definitions of measure
and topological entropy for actions of countable discrete amenable groups
will be recalled in Sections~\ref{S-topological} and \ref{S-measure}, respectively.

For $d\in\Nb$ we write $\Sym (d)$ for the group of permutations of $\{ 1,\dots ,d \}$.
Let $G$ be a countable discrete group. The identity element of such a $G$ will always be denoted by $e$.
The group $G$ is said to be {\it amenable} if it admits a left invariant mean, i.e., a state on $\ell^\infty (G)$ which is invariant
under left translation by $G$. This is equivalent to the existence of a F{\o}lner sequence, which is a
sequence $\{ F_i \}_{i=1}^\infty$ of nonempty finite subsets of $G$ such that
$|F_i |^{-1} |sF_i \Delta F_i | \to 0$ as $i\to\infty$ for all $s\in G$.
We say that $G$ is {\it sofic} if for $i\in\Nb$ there
are a sequence $\{ d_i \}_{i=1}^\infty$ of positive integers and a sequence
$\{ \sigma_i \}_{i=1}^\infty$ of maps $s\mapsto\sigma_{i,s}$ from $G$ to $\Sym (d_i )$
which is asymptotically multiplicative and free in the sense that
\[
\lim_{i\to\infty}
\frac{1}{d_i} \big| \{ a\in \{ 1,\dots ,d_i \} : \sigma_{i,st} (a) = \sigma_{i,s} \sigma_{i,t} (a) \} \big| = 1
\]
for all $s,t\in G$ and
\[
\lim_{i\to\infty}
\frac{1}{d_i} \big| \{ a\in \{ 1,\dots ,d_i \} : \sigma_{i,s} (a) \neq \sigma_{i,t} (a) \} \big| = 1
\]
for all distinct $s,t\in G$. Such a sequence $\{ \sigma_i \}_{i=1}^\infty$ for which
$\lim_{i\to\infty} d_i = \infty$ is referred to as a {\it sofic approximation sequence} for $G$.
The condition $\lim_{i\to\infty} d_i = \infty$ is assumed in order to avoid pathologies in the theory of
sofic entropy (e.g., it is essential for the variational principle in \cite{KerLi10}) and is automatic
if $G$ is infinite. Note that if $G$ is amenable then it is sofic, as one can easily construct a sofic approximation
sequence from a F{\o}lner sequence.

For a map $\sigma :G\to\Sym (d)$ for some $d\in\Nb$ we will
denote $\sigma_s (a)$ for $s\in G$ and $a\in \{ 1,\dots ,d\}$ simply by $sa$ when convenient, and also use
$\sigma$ to denote the induced map from $G$ into the automorphism group of the $C^*$-algebra
$C(\{ 1,\dots ,d \} ) \cong\Cb^d$ given by $\sigma_s (f)(a) = f(s^{-1} a)$
for all $s\in G$, $f\in \Cb^d$, and $a\in \{1,\dots ,d\}$. For a $d\in\Nb$ we will invariably use $\zeta$ to denote the
uniform probability measure on $\{ 1,\dots ,d \}$,
which will be regarded as a state (i.e., a unital positive linear functional) on the $C^*$-algebra
$\Cb^d \cong C(\{ 1,\dots ,d \} )$ whenever appropriate.

Let $(Y,\rho )$ be a pseudometric space and $\varepsilon\geq 0$. A set $A\subseteq Y$ is said
to be {\it $(\rho ,\varepsilon )$-separated} or {\it $\varepsilon$-separated with respect to $\rho$} if
$\rho (x,y) \geq \varepsilon$ for all distinct $x,y\in A$, and {\it $(\rho ,\varepsilon )$-spanning} or
{\it $\varepsilon$-spanning with respect to $\rho$} if for every $y\in Y$ there is an $x\in A$
such that $\rho (x,y) < \varepsilon$.
We write $N_\varepsilon (Y, \rho )$ for the maximal cardinality of a finite $(\rho ,\varepsilon )$-separated subset of $Y$.
If $G$ is a group acting on $Y$ and $F$ is a nonempty finite subset of $G$ then
we define the pseudometric $\rho_F$ on $Y$ by $\rho_F (x,y) = \max_{s\in F} \rho (sx,sy)$.
\medskip

\noindent{\it Acknowledgements.}
The first author was partially supported by NSF
grant DMS-0900938. He thanks Lewis Bowen for several helpful discussions. Part of this work was carried
out during a visit of the first author to SUNY at Buffalo in February 2010 and he thanks the
analysis group there for its hospitality. The second author was partially supported by NSF grants DMS-0701414
and DMS-1001625.
We are grateful to the referee for helpful comments.

\section{Topological entropy}\label{S-topological defn}

Let $G$ be a countable sofic group,
$X$ a compact metrizable space, and $\alpha$ a continuous action of $G$
on $X$. The action of $G$ on points will usually be
expressed by the concatenation $(s,x)\mapsto sx$, and $\alpha$ will be also be used for the induced action of $G$ on
$C(X)$ by automorphisms, so that for $f\in C(X)$ and $s\in G$ the function $\alpha_s (f)$
is given by $x \mapsto f(s^{-1} x)$. A subset of $C(X)$ is said to be {\it dynamically generating}
if it is not contained in any proper $G$-invariant unital $C^*$-subalgebra of $C(X)$.

First we recall the definition of sofic topological entropy from \cite{KerLi10} and then show how it
can be reformulated using approximately equivariant maps from the sofic approximation space into $X$.
Throughout this section $\Sigma = \{ \sigma_i : G \to \Sym (d_i ) \}_{i=1}^\infty$ is a fixed sofic approximation sequence for $G$.
Let $\cS=\{p_n\}_{n\in \Nb}$ be a sequence in the unit ball of $C_{\Rb} (X)$.
For a given $d\in\Nb$ we define on the set of unital homomorphisms from $C(X)$ to $\Cb^d$ the pseudometrics
\begin{align*}
\rho_{\cS, 2} (\varphi , \psi ) &= \sum_{n=1}^{\infty} \frac{1}{2^n}\| \varphi (p_n) - \psi (p_n) \|_2, \\
\rho_{\cS, \infty} (\varphi , \psi ) &= \sum_{n=1}^{\infty} \frac{1}{2^n}\| \varphi (p_n) - \psi (p_n) \|_\infty,
\end{align*}
where the norm $\|\cdot \|_2$ refers to the uniform probability measure $\zeta$ on $\{1, \dots, d\}$.
For a nonempty finite set $F\subseteq G$,
a $\delta > 0$, and a map $\sigma : G\to\Sym (d)$ for some $d\in\Nb$
we define $\Hom (\cS , F,\delta ,\sigma )$ to be
the set of all unital homomorphisms $\varphi : C(X) \to \Cb^d$ such that
\[ \rho_{\cS, 2}( \varphi\circ\alpha_s,  \sigma_s \circ\varphi)< \delta
\]
for all $s\in F$.
For an $\varepsilon > 0$ we then set
\begin{align*}
h_\Sigma^\varepsilon (\cS ,F,\delta )
&= \limsup_{i\to\infty} \frac{1}{d_i} \log N_\varepsilon (\Hom (\cS ,F,\delta ,\sigma_i ), \rho_{\cS, 2} ) ,\\
h_\Sigma^\varepsilon (\cS ,F) &= \inf_{\delta > 0} h_\Sigma^\varepsilon (\cS ,F,\delta ) ,\\
h_\Sigma^\varepsilon (\cS ) &= \inf_{F} h_\Sigma^\varepsilon (\cS ,F) ,\\
h_\Sigma (\cS ) &= \sup_{\varepsilon > 0} h_\Sigma^\varepsilon (\cS )
\end{align*}
where $F$ in the third line ranges over all nonempty finite subsets of $G$.
If $\Hom (\cS ,F,\delta ,\sigma_i )$ is empty for all sufficiently large $i$, we set
$h_\Sigma^\varepsilon (\cS ,F,\delta ) = -\infty$.
By Theorem~4.5 of \cite{KerLi10} the quantity $h_\Sigma (\cS )$ is the same for all dynamically
generating $\cS$, and we define the topological entropy $h_\Sigma (X,G)$ of the system to be this value.

The following lemma is a version of Lemma~4.8 in \cite{KerLi10}, saying that $h_\Sigma (\cS)$ can also be computed by substituting $\rho_{\cS ,\infty}$ for $\rho_{\cS ,2}$,
and can be established by a similar argument.

\begin{lemma}\label{L-infinity norm top}
Let $\cS$ be a sequence in the unit ball of $C_\Rb (X)$. Then
\[
h_\Sigma (\cS) = \sup_{\varepsilon > 0} \inf_F \inf_{\delta > 0}
\limsup_{i\to\infty} \frac{1}{d_i} \log N_{\varepsilon} (\Hom (\cS ,F,\delta ,\sigma_i ),\rho_{\cS , \infty} )
\]
where $F$ ranges over the nonempty finite subsets of $G$.
\end{lemma}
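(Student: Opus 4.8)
The plan is to prove two inequalities, writing $R$ for the right-hand side of the asserted formula. Identifying a unital homomorphism $\varphi\colon C(X)\to\Cb^{d}$ with the point $(x_1,\dots,x_d)\in X^{d}$ for which $\varphi(f)(a)=f(x_a)$, one has coordinatewise $\|\varphi(p_n)-\psi(p_n)\|_2\le\|\varphi(p_n)-\psi(p_n)\|_\infty$ with respect to $\zeta$, so that $\rho_{\cS,2}\le\rho_{\cS,\infty}$ on $\Hom(\cS,F,\delta,\sigma_i)$. Consequently every $(\rho_{\cS,2},\varepsilon)$-separated set is $(\rho_{\cS,\infty},\varepsilon)$-separated, giving $N_\varepsilon(\Hom(\cS,F,\delta,\sigma_i),\rho_{\cS,2})\le N_\varepsilon(\Hom(\cS,F,\delta,\sigma_i),\rho_{\cS,\infty})$ for every $F$, $\delta$ and $i$. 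Taking $\tfrac1{d_i}\log$, then $\limsup_i$, $\inf_\delta$, $\inf_F$ and finally $\sup_\varepsilon$ yields $h_\Sigma(\cS)\le R$. Note that the defining membership condition for $\Hom$ is expressed via $\rho_{\cS,2}$ and is unchanged throughout, so only the separation pseudometric varies.

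For the reverse inequality $R\le h_\Sigma(\cS)$ I would follow the combinatorial argument behind Lemma~4.8 of \cite{KerLi10}. Fix $\varepsilon>0$ and choose $N\in\Nb$ with $\sum_{n>N}2^{-n}\cdot 2<\varepsilon/4$, so the tail of both pseudometrics is negligible and, for distinct $\varphi,\psi$, the bound $\rho_{\cS,\infty}(\varphi,\psi)\ge\varepsilon$ forces $\|\varphi(p_n)-\psi(p_n)\|_\infty\ge 3\varepsilon/4$ for some $n\le N$ (by the pigeonhole principle, as $\sum_{n\le N}2^{-n}<1$). Now fix a scale $\kappa>0$ to be sent to $0$ at the end, and let $B\subseteq\Hom(\cS,F,\delta,\sigma_i)$ be a maximal $(\rho_{\cS,2},\kappa)$-separated subset, so that the $\rho_{\cS,2}$-balls of radius $\kappa$ about points of $B$ cover $\Hom(\cS,F,\delta,\sigma_i)$. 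It then suffices to bound, uniformly, the number of points of a $(\rho_{\cS,\infty},\varepsilon)$-separated set that can lie in one such ball.

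The key estimate is that $\rho_{\cS,2}$-closeness forces agreement off a small coordinate set. For $\varphi$ in the ball about $\psi$ one has $\|\varphi(p_n)-\psi(p_n)\|_2<2^{n}\kappa$, so the set $S_{\varphi}=\{a:\ |\varphi(p_n)(a)-\psi(p_n)(a)|\ge 3\varepsilon/8\ \text{for some}\ n\le N\}$ has cardinality at most $m:=d\kappa^{2}C(\varepsilon,N)$ by the Chebyshev inequality, where $C(\varepsilon,N)$ depends only on $\varepsilon$ and $N$. If two points $\varphi,\varphi'$ of the ball share the same set $S=S_\varphi=S_{\varphi'}$ and the vectors $(\varphi(p_n)(a))_{n\le N,a\in S}$ and $(\varphi'(p_n)(a))_{n\le N,a\in S}$ fall in a common cell of a mesh-$(\varepsilon/4)$ partition of $[-1,1]^{N}$ at each $a\in S$, then $\|\varphi(p_n)-\varphi'(p_n)\|_\infty<3\varepsilon/4$ for all $n\le N$, whence $\rho_{\cS,\infty}(\varphi,\varphi')<\varepsilon$; thus distinct separated points are distinguished by this finite datum. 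Counting the choices gives a per-ball bound of $\binom{d}{\le m}\,(8/\varepsilon)^{Nm}$, and with $t=m/d=C(\varepsilon,N)\kappa^{2}$ its exponential rate $\tfrac1d\log\big(\binom{d}{\le m}(8/\varepsilon)^{Nm}\big)$ is at most $t\log(e/t)+Nt\log(8/\varepsilon)+o(1)=:\eta(\kappa)+o(1)$, where $\eta(\kappa)\to0$ as $\kappa\to0$.

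Combining, $N_\varepsilon(\Hom(\cS,F,\delta,\sigma_i),\rho_{\cS,\infty})\le |B|\cdot\binom{d_i}{\le m}(8/\varepsilon)^{Nm}$, so $\tfrac1{d_i}\log N_\varepsilon(\cdot,\rho_{\cS,\infty})\le\tfrac1{d_i}\log N_\kappa(\cdot,\rho_{\cS,2})+\eta(\kappa)+o(1)$ for every $F,\delta,i$. Applying $\limsup_i$, $\inf_\delta$ and $\inf_F$ gives $\inf_F\inf_\delta\limsup_i\tfrac1{d_i}\log N_\varepsilon(\cdot,\rho_{\cS,\infty})\le h_\Sigma^{\kappa}(\cS)+\eta(\kappa)\le h_\Sigma(\cS)+\eta(\kappa)$; letting $\kappa\to0$ and then taking $\sup_\varepsilon$ yields $R\le h_\Sigma(\cS)$. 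The main obstacle is the middle step: one must see that the gap between the two pseudometrics is controlled not by a pointwise comparison but by the fact that, at a sufficiently fine $\rho_{\cS,2}$-scale $\kappa$, discrepancies that are large in $\|\cdot\|_\infty$ are supported on an asymptotically negligible fraction $t=O(\kappa^{2})$ of the $d_i$ coordinates, so the extra counting factor is subexponential and disappears as $\kappa\to0$.
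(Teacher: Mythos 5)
Your proof is correct, and it is essentially the intended argument: the paper gives no proof here but defers to Lemma~4.8 of \cite{KerLi10}, whose proof runs exactly along your lines --- the trivial direction from $\|\cdot\|_2\le\|\cdot\|_\infty$, and the reverse direction by covering with small $\rho_{\cS,2}$-balls and showing via a Chebyshev estimate that $\rho_{\cS,\infty}$-separated points within one ball are distinguished by data (a coordinate set of density $O(\kappa^2)$ plus a discretization) whose count is subexponential as $\kappa\to 0$. The quantifier bookkeeping at the end ($\limsup_i$, $\inf_\delta$, $\inf_F$, then $\kappa\to 0$ and $\sup_\varepsilon$) is also handled correctly, including the empty-set convention since $\Hom(\cS,F,\delta,\sigma_i)$ is the same set for both pseudometrics.
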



Now let $\rho$ be a continuous pseudometric on $X$, which will play the role of $\cS$ in our spatial definition.
For a given $d\in\Nb$, we define on the set of all maps from $\{ 1,\dots ,d\}$ to $X$ the pseudometrics
\begin{align*}
\rho_2 (\varphi , \psi ) &= \bigg( \frac{1}{d} \sum_{a=1}^d (\rho (\varphi (a),\psi (a)))^2 \bigg)^{1/2} , \\
\rho_\infty (\varphi ,\psi ) &= \max_{a=1,\dots ,d} \rho (\varphi (a),\psi (a)) .
\end{align*}

\begin{definition}\label{D-map top}
Let $F$ be a nonempty finite subset of $G$ and $\delta > 0$.
Let $\sigma$ be a map from $G$ to $\Sym (d)$ for some $d\in\Nb$.
We define $\Map (\rho ,F,\delta ,\sigma )$ to be the set of all maps $\varphi : \{ 1,\dots ,d\} \to X$ such that
$\rho_2 (\varphi\circ\sigma_s , \alpha_s \circ\varphi ) < \delta$ for all $s\in F$.
\end{definition}

\begin{definition}
Let $F$ be a nonempty finite subset of $G$ and
$\delta > 0$. For $\varepsilon > 0$ we define
\begin{align*}
h_{\Sigma ,2}^\varepsilon (\rho ,F, \delta ) &=
\limsup_{i\to\infty} \frac{1}{d_i} \log N_\varepsilon (\Map (\rho ,F,\delta ,\sigma_i ),\rho_2 ) ,\\
h_{\Sigma ,2}^\varepsilon (\rho ,F) &= \inf_{\delta > 0} h_{\Sigma ,2}^\varepsilon (\rho ,F,\delta ) ,\\
h_{\Sigma ,2}^\varepsilon (\rho ) &= \inf_{F} h_{\Sigma ,2}^\varepsilon (\rho ,F) ,\\
h_{\Sigma ,2} (\rho ) &= \sup_{\varepsilon > 0} h_{\Sigma ,2}^\varepsilon (\rho ) ,
\end{align*}
where $F$ in the third line ranges over the nonempty finite subsets of $G$. If
$\Map (\rho ,F,\delta ,\sigma_i )$ is empty for all sufficiently large $i$, we set
$h_{\Sigma ,2}^\varepsilon (\rho ,F, \delta ) = -\infty$.
We similarly define $h_{\Sigma ,\infty}^\varepsilon (\rho ,F, \delta )$, $h_{\Sigma ,\infty}^\varepsilon (\rho ,F)$,
$h_{\Sigma ,\infty}^\varepsilon (\rho )$ and $h_{\Sigma ,\infty} (\rho )$
using $N_\varepsilon(\cdot, \rho_\infty)$ in place of $N_\varepsilon(\cdot, \rho_2)$.
\end{definition}

%

We say that $\rho$ is {\it dynamically generating} \cite[Sect.\ 4]{Li10} if
for any distinct points $x,y\in X$ one has $\rho(sx, sy)>0$ for some $s\in G$.

\begin{proposition} \label{P-topological entropy}
Suppose that $\rho$ is dynamically generating. Then
\[
h_\Sigma (X,G) = h_{\Sigma ,2} (\rho )=h_{\Sigma ,\infty} (\rho) .
\]
\end{proposition}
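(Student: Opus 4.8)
The plan is to move from the homomorphism picture defining $h_\Sigma(X,G)$ to the spatial picture via the Gelfand correspondence, and then to show that all the pseudometrics that arise, while typically not bi-Lipschitz comparable, are uniformly equivalent with constants independent of the sofic index $i$ --- which is all that the entropy formula, a $\sup_\varepsilon$ of exponential separated-set growth rates, can detect. First I would record the bijection: every unital homomorphism $\varphi\colon C(X)\to\Cb^d$ has the form $\varphi(f)(a)=f(\hat\varphi(a))$ for a unique map $\hat\varphi\colon\{1,\dots,d\}\to X$, and $\varphi\mapsto\hat\varphi$ is a bijection onto all such maps. A direct computation shows that $\varphi\circ\alpha_s$ corresponds to $\alpha_{s^{-1}}\circ\hat\varphi$ and $\sigma_s\circ\varphi$ to $\hat\varphi\circ\sigma_{s^{-1}}$, so the equivariance defect $\rho_{\cS,2}(\varphi\circ\alpha_s,\sigma_s\circ\varphi)$ is exactly the spatial defect of $\hat\varphi$ at the element $s^{-1}$, differing from the defect used in $\Map(\rho,\cdot)$ only in which pseudometric is applied. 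Since both $h_\Sigma(X,G)$ and $h_{\Sigma,2}(\rho)$ take an infimum over all finite $F\subseteq G$, the resulting interchange $F\leftrightarrow F^{-1}$ is harmless.

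Next I would manufacture a sequence $\cS$ from $\rho$. Fix a dense sequence $\{y_k\}$ in $X$, put $M=\diam_\rho(X)<\infty$, set $p_k=M^{-1}\rho(\cdot,y_k)$ in the unit ball of $C_\Rb(X)$, and let $\cS=\{p_k\}$. Density gives $\sup_k|p_k(x)-p_k(y)|=M^{-1}\rho(x,y)$, and the hypothesis that $\rho$ is dynamically generating translates through Stone--Weierstrass into $\cS$ being dynamically generating, so that $h_\Sigma(\cS)=h_\Sigma(X,G)$ by Theorem~4.5 of \cite{KerLi10}. I would also introduce the auxiliary continuous pseudometric $\rho_{\cS}(x,y)=\sum_k2^{-k}|p_k(x)-p_k(y)|$, which satisfies $\rho_{\cS}\le M^{-1}\rho$ and has the same null pairs as $\rho$; two continuous pseudometrics on the compact space $X$ with the same null pairs are uniformly equivalent, since the identity between the associated quotient metric spaces is a continuous bijection of compact spaces and hence a homeomorphism. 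Thus $\rho$ and $\rho_{\cS}$ are uniformly equivalent.

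The heart of the argument, and the step I expect to be the main obstacle, is to lift this uniform equivalence to the $d$-point model spaces with constants independent of $d$. Under the bijection one has $\rho_{\cS,2}(\varphi,\psi)=\sum_k2^{-k}\|p_k\circ\hat\varphi-p_k\circ\hat\psi\|_2$. Bounding $|p_k(x)-p_k(y)|\le M^{-1}\rho(x,y)$ gives $\rho_{\cS,2}\le M^{-1}\rho_2$, while Minkowski's inequality gives $(\rho_{\cS})_2\le\rho_{\cS,2}$, where $(\rho_{\cS})_2$ is the $\ell^2$-averaged pseudometric built pointwise from $\rho_{\cS}$. A Chebyshev estimate on the coordinates $a$ where $\rho_{\cS}(\hat\varphi(a),\hat\psi(a))$ is large, using $\rho\le M$ to bound their contribution, upgrades the equivalence of $\rho$ and $\rho_{\cS}$ on $X$ to a uniform equivalence of $(\rho_{\cS})_2$ and $\rho_2$ on maps with $d$-independent moduli; the sandwich $(\rho_{\cS})_2\le\rho_{\cS,2}\le M^{-1}\rho_2$ then forces $\rho_{\cS,2}$ and $\rho_2$ to be uniformly equivalent. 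The analogue for the maxima is easier, since $\max_a\rho_{\cS}(\hat\varphi(a),\hat\psi(a))<\beta$ forces $\max_a\rho(\hat\varphi(a),\hat\psi(a))<\varepsilon$ coordinatewise, matching $\rho_\infty$ with the pseudometric underlying $\rho_{\cS,\infty}$. Note that $\Map(\rho,F,\delta,\sigma_i)$ and $\Hom(\cS,F,\delta,\sigma_i)$ both use the $2$-pseudometrics by definition, so a single constraint comparison serves both the $2$- and $\infty$-versions.

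Finally I would feed these equivalences into the formula. The pointwise bound $\rho_{\cS,2}\le M^{-1}\rho_2$ yields $\Map(\rho,F,\delta,\sigma_i)\subseteq\Hom(\cS,F^{-1},\delta',\sigma_i)$ for a suitable $\delta'$, and the uniform reverse bound the opposite containment at a smaller parameter; combined with monotonicity of $N_\varepsilon$ in the set and the fact that an $\varepsilon$-separated set for one counting pseudometric is $\theta(\varepsilon)$-separated for the other with $\theta$ independent of $i$, the operators $\inf_F$, $\inf_\delta$ and $\sup_\varepsilon$ absorb every parameter adjustment. This gives $h_{\Sigma,2}(\rho)=h_\Sigma(\cS)=h_\Sigma(X,G)$. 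Running the same final comparison with the $\infty$-pseudometrics, and invoking Lemma~\ref{L-infinity norm top} to compute $h_\Sigma(\cS)$ via $\rho_{\cS,\infty}$, yields $h_{\Sigma,\infty}(\rho)=h_\Sigma(X,G)$ as well, completing the proof. The one genuinely delicate point remains the $d$-uniformity in the third paragraph: because $\rho$ and $\rho_{\cS}$ are only uniformly, not bi-Lipschitz, equivalent, one must check that uniform equivalence survives passage to the averaged and maximized pseudometrics on the finite models, and it is precisely the boundedness $\rho\le M$ that makes the exceptional-coordinate estimate work in the $\ell^2$ case.
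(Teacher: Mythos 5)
Your proof is correct and follows essentially the same route as the paper's: pass through the Gelfand correspondence between unital homomorphisms $C(X)\to\Cb^d$ and maps $\{1,\dots,d\}\to X$, produce a dynamically generating sequence $\cS$ whose weighted $\ell^1$ pseudometric $\rho_\cS$ is uniformly equivalent to $\rho$ by compactness, and lift that equivalence to the $d$-point model pseudometrics uniformly in $d$. The only cosmetic differences are that the paper builds $\cS$ from an abstract generating sequence for $C(Y)$ with $Y$ the quotient of $X$ by $\rho$ rather than from distance functions to a dense sequence, and that you make explicit two points the paper leaves as "easily checked," namely the $F\leftrightarrow F^{-1}$ bookkeeping and the exceptional-coordinate (Chebyshev) estimate behind the $d$-uniform equivalence of the averaged pseudometrics.
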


\begin{proof}
We will show that $h_\Sigma (X,G) = h_{\Sigma ,2} (\rho )$. The proof for $h_\Sigma (X,G)=h_{\Sigma ,\infty} (\rho)$ is similar,
in view of
Lemma~\ref{L-infinity norm top}.

We say that two continuous pseudometrics $\rho$ and $\rho'$ on $X$ are equivalent if for any $\varepsilon>0$
there is an $\varepsilon'>0$ such
that, for any points $x,y\in X$, if $\rho'(x, y)<\varepsilon'$ then $\rho(x, y)<\varepsilon$, and vice versa.
If $\rho$ and $\rho'$ are equivalent, then the pseudometrics $\rho_2$ and $\rho'_2$ on the set of all maps
$\{1, \dots, d\}\rightarrow X$ are uniformly equivalent in the sense that
for any $\delta>0$ there is some $\delta'>0$ such that, for any $d\in \Nb$ and any maps $\Phi$ and $\Psi$
from $\{1, \dots, d\}$ to $X$, if $\rho'_2(\Phi, \Psi)<\delta'$ then $\rho_2(\Phi, \Psi)<\delta$, and vice versa.
From this one concludes easily that $h_{\Sigma ,2} (\rho )=h_{\Sigma ,2} (\rho')$.

Let $Y$ be the quotient space of $X$ modulo $\rho$. That is, $Y$ is a quotient of $X$ such that, for any points
$x,y\in X$, $x$ and $y$ have the same image in $Y$ if and only if $\rho(x, y)=0$.
Then $\rho$ induces a compatible metric on $Y$. Let $\cS=\{p_n\}_{n\in \Nb}$ be a sequence in the unit ball of
$C_{\Rb}(Y)$ generating $C(Y)$ as a unital $C^*$-algebra.
Then we have a compatible metric $\rho'$ on $Y$ defined by
\[
\rho'(x, y)= \sum_{n=1}^{\infty}\frac{1}{2^n}|p_n(x)-p_n(y)|.
\]
Via the quotient map $X\rightarrow Y$, we may think of $\cS$ as a sequence in $C(X)$ and $\rho'$ as a continuous pseudometric on $X$. Then both $\cS$ and $\rho'$ are dynamically generating, and, since $Y$ is compact,
$\rho$ is equivalent to $\rho'$. Now it suffices to show that $h_\Sigma (\cS)=h_{\Sigma ,2} (\rho')$.

Note that for any $d\in \Nb$ there is a natural one-to-one correspondence between the set of unital homomorphisms
$\phi: C(X)\rightarrow \Cb^d$ and the set of maps
$\Phi: \{1, \dots, d\}\rightarrow X$. For each $\Phi$, the corresponding $\phi$ sends $f\in C(X)$ to $f\circ \Phi$.
Via this correspondence, one may think
of $\rho_{\cS, 2}$ as a pseudometric on the set of all maps $\{1, \dots, d\} \rightarrow X$. It is easily checked that
$\rho_{\cS, 2}$ and $\rho'_2$ are uniformly equivalent. It follows that $h_\Sigma (\cS)=h_{\Sigma ,2} (\rho')$.
\end{proof}

%

\section{Measure entropy}\label{S-measure defn}

Let $G$ be a countable sofic group,
$(X,\mu )$ a standard probability space, and $\alpha$ an action of $G$
by measure-preserving transformations on $X$. As before $\Sigma = \{ \sigma_i : G\to\Sym (d_i ) \}$
is a fixed sofic approximation sequence. The entropy $h_{\Sigma ,\mu} (X,G)$ is defined
as in the topological case but now using approximately multiplicative linear maps from $L^\infty (X,\mu )$ to $\Cb^{d_i}$
which are approximately equivariant and approximately pull back the uniform probability measure on
$\{ 1,\dots ,d_i \}$ to $\mu$ \cite[Defn.\ 2.2]{KerLi10}.
We will not reproduce here the details of the definition, which has been formulated as such in order
to show that one obtains a measure conjugacy invariant. Instead we will recall a
more convenient equivalent definition that applies when
$\mu$ is a $G$-invariant Borel probability measure for a continuous action of $G$ on a compact metrizable
space $X$ \cite[Sect.\ 5]{KerLi10}.
This permits us to work with homomorphisms instead of maps which are merely approximately
multiplicative, which means that, as in the topological case, we can alternatively speak in terms of approximately
equivariant maps at the spatial level, as we will explain.

So suppose that $G$ acts continuously on a compact metrizable space $X$ with a $G$-invariant
Borel probability measure $\mu$.
Let $\cS = \{ p_n \}_{n=1}^\infty$ be a sequence in the unit ball of $C_\Rb (X)$.
Recall the pseudometrics $\rho_{\cS ,2}$ and $\rho_{\cS ,\infty}$ defined in the second paragraph of
Section~\ref{S-topological defn}.
Let $F$ be a nonempty finite subset of $G$
and $m\in\Nb$. We write $\cS_{F,m}$ for the set of all
products of the form $\alpha_{s_1} (f_1 ) \cdots \alpha_{s_j} (f_j )$ where $1\le j\le m$ and
$f_1 , \dots ,f_j \in \{ p_1 ,\dots ,p_m \}$ and $s_1 , \dots ,s_j \in F$.
For a map $\sigma : G\to\Sym (d)$ for some $d\in\Nb$, we
write $\Hom_\mu^X (\cS ,F,m,\delta ,\sigma )$ for the set of unital homomorphisms
$\varphi : C(X)\to\Cb^d$ such that
\begin{enumerate}
\item[(i)]
$| \zeta\circ\varphi (f) - \mu (f) | < \delta$ for all $f\in \cS_{F, m}$, and

\item[(ii)]
$\| \varphi\circ\alpha_s (f) - \sigma_s \circ\varphi (f) \|_2 < \delta$ for all $s\in F$ and $f\in \{p_1, \dots, p_m\}$.
\end{enumerate}
For $\varepsilon > 0$ we set
\begin{align*}
\ch_{\Sigma ,\mu}^\varepsilon (\cS ,F,m,\delta ) &=
\limsup_{i\to\infty} \frac{1}{d_i} \log N_\varepsilon (\Hom_\mu^X (\cS ,F,m,\delta ,\sigma_i ),\rho_{\cS ,2} ) ,\\
\ch_{\Sigma ,\mu}^\varepsilon (\cS ,F,m) &= \inf_{\delta > 0} \ch_{\Sigma ,\mu}^\varepsilon (\cS ,F,m,\delta ) ,\\
\ch_{\Sigma ,\mu}^\varepsilon (\cS ,F) &= \inf_{m\in\Nb} \ch_{\Sigma ,\mu}^\varepsilon (\cS ,F,m) ,\\
\ch_{\Sigma ,\mu}^\varepsilon (\cS ) &= \inf_{F} \ch_{\Sigma ,\mu}^\varepsilon (\cS ,F) ,\\
\ch_{\Sigma ,\mu} (\cS ) &= \sup_{\varepsilon > 0} \ch_{\Sigma ,\mu}^\varepsilon (\cS ) ,
\end{align*}
where $F$ in the fourth line ranges over the nonempty finite subsets of $G$. If $\Hom_\mu^X (\cS ,F,m,\delta ,\sigma_i )$
is empty for all sufficiently large $i$, we set $\ch_{\Sigma ,\mu}^\varepsilon (\cS ,F,m,\delta ) = -\infty$.
In the case that $\cS$ is dynamically generating in the sense of the first paragraph of the previous section,
$\ch_{\Sigma ,\mu} (\cS )$ is equal to $h_{\Sigma ,\mu} (X,G)$ \cite[Prop.\ 5.4]{KerLi10}.

The following lemma is a measure-theoretic version of Proposition~4.8 in \cite{KerLi10}, saying that $\ch_{\Sigma ,\mu} (\cS)$ can also be computed by substituting $\rho_{\cS ,\infty}$ for $\rho_{\cS ,2}$,
and can be proved in the same way.

\begin{lemma}\label{L-infinity norm}
Let $\cS$
be a sequence in the unit ball of $C_\Rb (X)$. Then
\[
\ch_{\Sigma ,\mu} (\cS) = \sup_{\varepsilon > 0} \inf_F \inf_{m\in\Nb} \inf_{\delta > 0}
\limsup_{i\to\infty} \frac{1}{d_i} \log N_{\varepsilon} (\Hom_\mu^X (\cS ,F,m,\delta ,\sigma_i ),\rho_{\cS , \infty} )
\]
where $F$ ranges over the nonempty finite subsets of $G$.

\end{lemma}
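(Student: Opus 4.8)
The statement to prove is Lemma~\ref{L-infinity norm}, which asserts that the measure entropy $\ch_{\Sigma,\mu}(\cS)$ can be computed using $\rho_{\cS,\infty}$ in place of $\rho_{\cS,2}$ in the separated-set counting. The paper explicitly says this ``can be proved in the same way'' as Proposition~4.8 in \cite{KerLi10}, so my plan would mirror that argument, adapting it to the presence of the measure-compatibility condition (i).

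\emph{The plan is to} establish two inequalities comparing the quantity on the right-hand side (call it $H_\infty$) with $\ch_{\Sigma,\mu}(\cS)$. One direction is essentially free. Since $\|\cdot\|_2 \le \|\cdot\|_\infty$ pointwise on $\Cb^d$ with respect to the uniform measure $\zeta$, we have $\rho_{\cS,2}(\varphi,\psi) \le \rho_{\cS,\infty}(\varphi,\psi)$ for all $\varphi,\psi$. Hence any $(\rho_{\cS,\infty},\varepsilon)$-separated set is automatically $(\rho_{\cS,2},\varepsilon)$-separated, giving $N_\varepsilon(\cdot,\rho_{\cS,\infty}) \le N_\varepsilon(\cdot,\rho_{\cS,2})$ on the same $\Hom$ set. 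Taking $\frac{1}{d_i}\log$, the $\limsup$, and then the nested infima and supremum yields $H_\infty \le \ch_{\Sigma,\mu}(\cS)$ directly. \emph{First I would} dispatch this easy direction.

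\emph{The harder direction} is $\ch_{\Sigma,\mu}(\cS) \le H_\infty$, and here the main work lies. The key technical step, following \cite[Prop.\ 4.8]{KerLi10}, is a combinatorial ``slicing'' argument: given a $(\rho_{\cS,2},\varepsilon)$-separated subset of $\Hom_\mu^X(\cS,F,m,\delta,\sigma_i)$, one wants to extract a comparably large subset that is $(\rho_{\cS,\infty},\varepsilon')$-separated for a slightly smaller $\varepsilon'$, at the cost of a subexponential multiplicative factor that washes out after $\frac{1}{d_i}\log$ and the $\limsup$. The idea is that if $\rho_{\cS,2}(\varphi,\psi)\ge\varepsilon$ but $\rho_{\cS,\infty}$ is small, then $\varphi$ and $\psi$ must disagree substantially on a \emph{large} fraction of coordinates $a\in\{1,\dots,d_i\}$ for some truncation $\{p_1,\dots,p_N\}$ of $\cS$ (with $N=N(\varepsilon)$ chosen so the tail $\sum_{n>N}2^{-n}$ is negligible). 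One then groups homomorphisms according to a coarse combinatorial ``pattern'' recording, on a chosen subset of coordinates, the approximate values of $\varphi(p_1),\dots,\varphi(p_N)$ up to a fine partition of the (compact, hence totally bounded) range. Two $\rho_{\cS,2}$-separated maps sharing the same pattern on a near-full set of coordinates can be shown to be $\rho_{\cS,\infty}$-separated, and the number of patterns is subexponential in $d_i$, so the largest same-pattern class is exponentially comparable to the original separated set.

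\emph{The one point requiring care} relative to the purely topological Proposition~4.8 is that the ambient sets here are $\Hom_\mu^X(\cS,F,m,\delta,\sigma_i)$, carrying the extra measure-matching constraints~(i) and the finer equivariance constraint~(ii) indexed by $m$. I expect this to cause no real difficulty: condition~(i) and the form of condition~(ii) are preserved under passing to subsets, so the slicing procedure operates entirely within the fixed set $\Hom_\mu^X(\cS,F,m,\delta,\sigma_i)$ without perturbing membership, and the same $F$, $m$, $\delta$ are carried through both sides of the inequality before the infima are taken. Thus the additional infimum over $m\in\Nb$ present in the measure-theoretic formula is handled simply by fixing $F$, $m$, and $\delta$ throughout the comparison and only passing to the infima at the end. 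The genuine obstacle is purely the combinatorial extraction of an $\infty$-separated subfamily from a $2$-separated one with subexponential loss, exactly as in \cite{KerLi10}, and I would reproduce that argument verbatim with $\Hom_\mu^X$ substituted for the topological $\Hom$.
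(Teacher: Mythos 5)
Your proposal has the two directions of the comparison interchanged, and the error is not cosmetic. From $\|\cdot\|_2\le\|\cdot\|_\infty$ you correctly get $\rho_{\cS,2}\le\rho_{\cS,\infty}$, but the consequence you draw is backwards: a set on which $\rho_{\cS,\infty}\ge\varepsilon$ for all distinct pairs need not satisfy $\rho_{\cS,2}\ge\varepsilon$, since $\rho_{\cS,2}$ is the \emph{smaller} pseudometric. The correct trivial implication is that a $(\rho_{\cS,2},\varepsilon)$-separated set is automatically $(\rho_{\cS,\infty},\varepsilon)$-separated, so $N_\varepsilon(\cdot,\rho_{\cS,2})\le N_\varepsilon(\cdot,\rho_{\cS,\infty})$ and the free inequality is $\ch_{\Sigma,\mu}(\cS)\le H_\infty$, not $H_\infty\le\ch_{\Sigma,\mu}(\cS)$ as you claim. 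Consequently the ``hard direction'' you then set out to prove --- extracting a $(\rho_{\cS,\infty},\varepsilon')$-separated subfamily from a $(\rho_{\cS,2},\varepsilon)$-separated one --- is vacuous (the whole family is already $\rho_{\cS,\infty}$-separated with the same $\varepsilon$), and the scenario driving your slicing heuristic, namely $\rho_{\cS,2}(\varphi,\psi)\ge\varepsilon$ with $\rho_{\cS,\infty}(\varphi,\psi)$ small, cannot occur.

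The genuine content of the lemma, which your proposal never engages, is the inequality $H_\infty\le\ch_{\Sigma,\mu}(\cS)$: a $(\rho_{\cS,\infty},\varepsilon)$-separated set can a priori be much larger than any $(\rho_{\cS,2},\varepsilon)$-separated set, because two maps $\{1,\dots,d\}\to X$ that differ appreciably at only one coordinate are $\rho_{\cS,\infty}$-far but $\rho_{\cS,2}$-close. The argument of \cite[Lemma 4.8]{KerLi10}, which the paper invokes, handles this by taking a maximal $(\rho_{\cS,\infty},\varepsilon)$-separated set $E$, choosing inside it a maximal $(\rho_{\cS,2},\varepsilon')$-separated (hence $\varepsilon'$-spanning) subset, and then bounding the number of elements of $E$ lying in a single $\rho_{\cS,2}$-ball of radius $\varepsilon'$: such elements pairwise differ significantly only on a \emph{small} fraction of coordinates, so one counts the choices of that small coordinate set together with the finitely many approximate values on it, obtaining a factor of exponential growth rate that tends to $0$ as $\varepsilon'\to 0$. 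Your remark that the measure constraints (i) and (ii) in $\Hom_\mu^X$ pass to subsets and cause no additional difficulty is correct, but as written the proposal proves only the trivial inequality (and even that by an invalid step) and omits the counting argument that the lemma actually requires.
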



Now let $\rho$ be a continuous pseudometric on $X$. Recall the associated pseudometrics $\rho_2$ and $\rho_\infty$
as defined before Definition~\ref{D-map top}.

\begin{definition}
Let $F$ be a nonempty finite subset of $G$, $L$ a finite subset of $C(X)$, and $\delta > 0$.
Let $\sigma$ be a map from $G$ to $\Sym (d)$ for some $d\in\Nb$.
We define $\Map_\mu (\rho ,F, L,\delta ,\sigma )$ to be the set of all maps $\varphi : \{ 1,\dots ,d\} \to X$ such that
\begin{enumerate}
\item $\rho_2 (\varphi\circ\sigma_s , \alpha_s \circ\varphi ) < \delta$ for all $s\in F$, and

\item $\big| (\varphi_*\zeta)(f)
- \mu (f) \big| < \delta$ for all $f\in L$.
\end{enumerate}
\end{definition}

\begin{definition}
Let $F$ be a nonempty finite subset of $G$, $L$ a finite subset of $C(X)$, and $\delta > 0$.
For $\varepsilon > 0$ we define
\begin{align*}
h_{\Sigma ,\mu ,2}^\varepsilon (\rho ,F, L,\delta ) &=
\limsup_{i\to\infty} \frac{1}{d_i} \log N_\varepsilon (\Map_\mu (\rho ,F, L,\delta ,\sigma_i ),\rho_2 ) ,\\
h_{\Sigma ,\mu ,2}^\varepsilon (\rho ,F, L) &= \inf_{\delta > 0} h_{\Sigma ,\mu ,2}^\varepsilon (\rho ,F,L,\delta ) ,\\
h_{\Sigma ,\mu ,2}^\varepsilon (\rho ,F) &= \inf_{L} h_{\Sigma ,\mu ,2}^\varepsilon (\rho ,F,L) ,\\
h_{\Sigma ,\mu ,2}^\varepsilon (\rho ) &= \inf_{F} h_{\Sigma ,\mu ,2}^\varepsilon (\rho ,F) ,\\
h_{\Sigma ,\mu ,2} (\rho ) &= \sup_{\varepsilon > 0} h_{\Sigma ,\mu ,2}^\varepsilon (\rho ) ,
\end{align*}
where $L$ in the third line ranges over the finite subsets of $C(X)$ and
$F$ in the fourth line ranges over the nonempty finite subsets of $G$.
If $\Map_\mu (\rho ,F, L,\delta ,\sigma_i )$ is empty for all
sufficiently large $i$, we set $h_{\Sigma ,\mu ,2}^\varepsilon (\rho ,F, L,\delta ) = -\infty$.
We similarly define $h_{\Sigma ,\mu ,\infty}^\varepsilon (\rho ,F, L,\delta )$,
$h_{\Sigma ,\mu ,\infty}^\varepsilon (\rho ,F, L)$, $h_{\Sigma ,\mu ,\infty}^\varepsilon (\rho ,F)$,
$h_{\Sigma ,\mu ,\infty}^\varepsilon (\rho )$, and $h_{\Sigma ,\mu ,\infty} (\rho ) $ using $
N_\varepsilon(\cdot, \rho_\infty)$ in place of $N_\varepsilon(\cdot, \rho_2)$.
\end{definition}

%

Recall from the previous section that $\rho$ is said to be dynamically generating if
for any distinct points $x,y\in X$ one has $\rho(sx, sy)>0$ for some $s\in G$.

\begin{proposition} \label{P-measure entropy}
Suppose that $\rho$ is dynamically generating. Then
\[
h_{\Sigma ,\mu} (X,G) = h_{\Sigma ,\mu, 2} (\rho )=h_{\Sigma, \mu ,\infty}(\rho) .
\]
\end{proposition}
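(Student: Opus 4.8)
The plan is to mirror the structure of the proof of Proposition~\ref{P-topological entropy}, adapting each reduction step to accommodate the additional measure-matching condition. First I would establish the analogue of the equivalence-of-pseudometrics reduction: if $\rho$ and $\rho'$ are equivalent continuous pseudometrics on $X$, then $\rho_2$ and $\rho'_2$ are uniformly equivalent on the space of maps $\{1,\dots,d\}\to X$, and this uniform equivalence shows $h_{\Sigma,\mu,2}(\rho)=h_{\Sigma,\mu,2}(\rho')$. Note that the condition $|(\varphi_*\zeta)(f)-\mu(f)|<\delta$ in the definition of $\Map_\mu$ involves only $\rho$ through the maps $\varphi$, not through the pseudometric directly, so it is unaffected by replacing $\rho$ by an equivalent $\rho'$; thus the sets $\Map_\mu(\rho,F,L,\delta,\sigma)$ and $\Map_\mu(\rho',F,L,\delta,\sigma)$ coincide, and only the separated-set count changes, which is controlled by uniform equivalence.

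Next I would pass to the quotient space $Y$ of $X$ modulo $\rho$, on which $\rho$ descends to a compatible metric. As in the topological case, I choose a sequence $\cS=\{p_n\}$ in the unit ball of $C_\Rb(Y)$ generating $C(Y)$, define the compatible metric $\rho'(x,y)=\sum_n 2^{-n}|p_n(x)-p_n(y)|$ on $Y$, and pull both $\cS$ and $\rho'$ back to $X$ via the quotient map $X\to Y$. Then $\cS$ and $\rho'$ are dynamically generating, $\rho$ is equivalent to $\rho'$ by compactness of $Y$, and by the previous step it suffices to prove $\ch_{\Sigma,\mu}(\cS)=h_{\Sigma,\mu,2}(\rho')$. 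Using the one-to-one correspondence between unital homomorphisms $\phi:C(X)\to\Cb^d$ and maps $\Phi:\{1,\dots,d\}\to X$ (with $\phi(f)=f\circ\Phi$), I would check that $\rho_{\cS,2}$ and $\rho'_2$ are uniformly equivalent pseudometrics on the map space, exactly as before, so the separated-set counts agree in the limit.

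The new ingredient, and the step I expect to require the most care, is reconciling the two definitions of the homomorphism/map sets: the linear-definition set $\Hom_\mu^X(\cS,F,m,\delta,\sigma)$ imposes the measure condition $|\zeta\circ\varphi(f)-\mu(f)|<\delta$ over the enlarged family $\cS_{F,m}$ of products $\alpha_{s_1}(f_1)\cdots\alpha_{s_j}(f_j)$, whereas the spatial set $\Map_\mu(\rho,F,L,\delta,\sigma)$ imposes $|(\varphi_*\zeta)(f)-\mu(f)|<\delta$ over an arbitrary finite $L\subseteq C(X)$. Under the homomorphism--map correspondence one has $\zeta\circ\phi(f)=(\Phi_*\zeta)(f)$, so these are the same type of condition; the point is that the infima over $m\in\Nb$ (for $\Hom_\mu^X$) and over finite $L\subseteq C(X)$ (for $\Map_\mu$) are cofinal in each other. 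Indeed, since $\cS$ generates $C(X)$, the products in $\bigcup_{m}\cS_{F,m}$ are dense in the unit ball enough that testing against them controls testing against any fixed finite $L$ up to an adjustment of $\delta$, and conversely any $\cS_{F,m}$ is a particular finite subset of $C(X)$. I would argue that increasing $m$ while ranging $F$ over all finite subsets produces the same value as ranging $L$ over all finite subsets of $C(X)$, using continuity of the maps $f\mapsto \zeta\circ\phi(f)$ and $f\mapsto\mu(f)$ in the supremum norm to absorb the approximation. Combining this matching of the measure constraints with the uniform equivalence of $\rho_{\cS,2}$ and $\rho'_2$ yields $\ch_{\Sigma,\mu}(\cS)=h_{\Sigma,\mu,2}(\rho')$, and the $\rho_\infty$ statement follows identically using Lemma~\ref{L-infinity norm} in place of the $\rho_2$ computation.
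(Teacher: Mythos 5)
Your proposal follows essentially the same route as the paper: reduce to the quotient metric $\rho'$ and the generating sequence $\cS$ via uniform equivalence of the induced pseudometrics exactly as in the topological case, and then observe that the two measure constraints are cofinal in each other — the nontrivial direction being that, since $\spn(\cS_{F,m}\cup\{1\})$ is sup-norm dense in $C(X)$ by Stone--Weierstrass, testing $|\zeta\circ\phi(f)-\mu(f)|<\delta'$ on $\cS_{F,m}$ controls $|(\Phi_*\zeta)(g)-\mu(g)|<\delta$ for any fixed finite $L$ once $\delta'$ is shrunk by the size of the coefficients in the approximating linear combinations. This matches the paper's argument, which makes that last quantitative point explicit by taking $\delta'=\delta/(2M)$ with $M$ the maximal coefficient sum.
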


\begin{proof}
One can argue as in the proof of Proposition~\ref{P-topological entropy}, appealing to
Lemma~\ref{L-infinity norm} in the case of $h_{\Sigma, \mu ,\infty}(\rho)$. The only extra thing to observe is that for $\cS$ and
$\rho'$ as in the proof of Proposition~\ref{P-topological entropy}, given any finite subset $L$ of $C(X)$ and $\delta>0$
there exist a nonempty finite subset $F$ of $G$, an $m\in \Nb$ and a $\delta'>0$ such that, for any $d\in \Nb$ and any map
$\sigma: G\rightarrow \Sym(d)$, if $\phi$ is a unital homomorphism $C(X)\rightarrow \Cb^d$ satisfying
$|\zeta\circ \phi(f)-\mu(f)|<\delta'$ for all $f\in \cS_{F, m}$, then $|(\Phi_*\zeta)(g)-\mu(g)|<\delta$ for all $g\in L$,
where $\Phi$ is the corresponding map $\{1, \dots, d\}\rightarrow X$. Indeed, since $\cS$ is dynamically generating
one can find a nonempty finite subset $F$ of $G$ and an $m\in \Nb$ such that for each
$g\in L$ there exists some $\tilde{g}$ in the linear span of $\cS_{F, m}\cup \{1\}$ with $\|g-\tilde{g}\|_{\infty}<\delta/4$.
Denote by $M$ the maximum over all $g\in L$ of the sum of the absolute values of the coefficients of $\tilde{g}$
written as a linear combination of elements in $\cS_{F, m}\cup \{1\}$. Then one may take $\delta'$ to be $\delta/(2M)$.
\end{proof}

%

\section{The Rokhlin lemma for sofic approximations of countable discrete amenable groups}\label{S-Rokhlin}


Here we give a proof of the Rokhlin lemma for sofic approximations of countable discrete amenable
groups (Lemma~\ref{L-Rokhlin}), which will be used in both Sections~\ref{S-topological}
and \ref{S-measure}. The argument is extracted from \cite{OrnWei87}.

\begin{definition}
Let $(X,\mu )$ be a finite measure space and let $\delta \geq 0$.
A measurable set $A\subseteq X$ is said to {\it $\delta$-cover} or be a {\it $\delta$-covering} of $X$
if $\mu (A)\geq \delta \mu (X)$. A family of measurable subsets of $X$ is said to {\it $\delta$-cover} or be a
{\it $\delta$-covering} of $X$ if the union of its elements $\delta$-covers $X$.
A collection $\{ A_i \}_{i\in I}$ of
positive measure subsets of $X$ is said to be a {\it $\delta$-even covering} of $X$ if there exists
a number $M > 0$ such that $\sum_{i\in I} \mathbf{1}_{A_i} \leq M$ and
$\sum_{i\in I} \mu (A_i ) \geq (1-\delta ) M \mu (X)$.
We call $M$ a {\it multiplicity} of the $\delta$-even covering.
\end{definition}

\begin{definition}
Let $(X,\mu )$ be a finite measure space and let $\varepsilon \geq 0$.
A collection $\{ A_i \}_{i\in I}$ of positive measure sets is said to be
{\it $\varepsilon$-disjoint} if there exist pairwise disjoint sets $\widehat{A}_i \subseteq A_i$ such that
$\mu (\widehat{A}_i ) \geq (1-\varepsilon ) \mu (A_i )$ for all $i\in I$.
\end{definition}

The following two lemmas are from page~23 of \cite{OrnWei87}.

\begin{lemma}\label{L-frac}
Let $(X,\mu )$ be a finite measure space.
Let $\delta \in (0,1)$ and let $\{ A_i \}_{i\in I}$ be a countable $\delta$-even covering of $X$.
Then for every positive measure $B\subseteq X$ there exists an $i\in I$ such that
\[ \frac{\mu (A_i \cap B)}{\mu (A_i )} \leq \frac{\mu (B)}{(1-\delta) \mu (X)} . \]
\end{lemma}

\begin{proof}
If for some measurable $B\subseteq X$ we had
\[ \mu (A_i \cap B ) > \frac{\mu (B)}{(1-\delta )\mu (X)} \mu (A_i ) \]
for every $i\in I$, then taking a multiplicity $M$ for the $\delta$-even covering
and summing over $i$ would yield
\begin{align*}
\sum_{i\in I} \mu (A_i \cap B) &> \frac{\mu (B)}{(1-\delta )\mu (X)} \sum_{i\in I} \mu (A_i )
\geq \mu (B) M \\
&\geq \int_X \mathbf{1}_B (x) \bigg( \sum_{i\in I} \mathbf{1}_{A_i} (x) \bigg) d\mu (x) \\
&= \int_X \bigg( \sum_{i\in I} \mathbf{1}_{A_i \cap B} (x) \bigg) d\mu (x)
= \sum_{i\in I} \bigg( \int_X \mathbf{1}_{A_i \cap B} (x) \bigg) d\mu (x)\\
&= \sum_{i\in I} \mu (A_i \cap B) ,
\end{align*}
a contradiction.
\end{proof}

\begin{lemma}\label{L-disjointcover}
Let $(X,\mu )$ be a finite measure space.
Let $\delta ,\varepsilon\in [0,1)$ and
let $\{ A_i \}_{i\in I}$ be a finite $\delta$-even covering of $X$ by positive measure sets.
Then there is an $\varepsilon$-disjoint subcollection of $\{ A_i \}_{i\in I}$
which $\varepsilon (1-\delta )$-covers $X$.
\end{lemma}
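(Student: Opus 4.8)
The plan is to run a greedy selection that builds the subcollection one set at a time, always choosing a new set that overlaps the previously chosen ones by only a small fraction of itself; the $\varepsilon$-disjointness will then be automatic, and the stopping condition will force the chosen union to be large. Concretely, I would construct an ordered subcollection indexed by $J\subseteq I$ as follows. Having selected sets indexed by a current $J$, with union $U=\bigcup_{j\in J}A_j$, I look for some index $i\in I\setminus J$ satisfying
\[
\mu\Big( A_i\cap\bigcup_{j\in J}A_j\Big)\le\varepsilon\,\mu(A_i),
\]
and if one exists I adjoin it to $J$. Since $I$ is finite this procedure terminates after finitely many steps, producing a final index set $J$ and union $U=\bigcup_{j\in J}A_j$.

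The first thing I would verify is that $\{A_j\}_{j\in J}$ is genuinely $\varepsilon$-disjoint. Writing the selected sets in the order $A_{j_1},A_{j_2},\dots$ in which they were chosen, I set $\widehat{A}_{j_k}=A_{j_k}\setminus\bigcup_{l<k}A_{j_l}$. These are pairwise disjoint by construction, and the selection criterion applied at the moment $A_{j_k}$ was chosen gives $\mu(A_{j_k}\cap\bigcup_{l<k}A_{j_l})\le\varepsilon\,\mu(A_{j_k})$, whence $\mu(\widehat{A}_{j_k})\ge(1-\varepsilon)\mu(A_{j_k})$. This is exactly the definition of $\varepsilon$-disjointness.

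Next I would establish the covering estimate. By termination, every $i\in I\setminus J$ fails the selection criterion, so $\mu(A_i\cap U)>\varepsilon\,\mu(A_i)$; and for $i\in J$ we have $A_i\subseteq U$, so $\mu(A_i\cap U)=\mu(A_i)\ge\varepsilon\,\mu(A_i)$. Summing over all $i\in I$ and invoking the even-covering hypothesis $\sum_{i\in I}\mu(A_i)\ge(1-\delta)M\mu(X)$ yields
\[
\sum_{i\in I}\mu(A_i\cap U)\ \ge\ \varepsilon\sum_{i\in I}\mu(A_i)\ \ge\ \varepsilon(1-\delta)M\mu(X).
\]
On the other hand, the multiplicity bound $\sum_{i\in I}\mathbf{1}_{A_i}\le M$ gives the opposite estimate
\[
\sum_{i\in I}\mu(A_i\cap U)=\int_U\Big(\sum_{i\in I}\mathbf{1}_{A_i}\Big)\,d\mu\ \le\ M\mu(U).
\]
Comparing the two displays and cancelling $M$ produces $\mu(U)\ge\varepsilon(1-\delta)\mu(X)$, which is the desired $\varepsilon(1-\delta)$-covering property.

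The routine parts are the termination (immediate from finiteness of $I$) and the manipulation of the slices $\widehat{A}_{j_k}$. The step I expect to require the most care is the final double-counting argument: one must correctly pair the per-set lower bounds $\mu(A_i\cap U)\ge\varepsilon\,\mu(A_i)$ (valid for \emph{all} $i\in I$, selected or not) against the multiplicity upper bound, and keep track of where the even-covering constant $M$ enters so that it cancels cleanly. The key insight making the whole scheme work is that the single overlap threshold $\varepsilon$ simultaneously governs both the disjointness of the chosen family and, via the stopping condition, the efficiency of its coverage.
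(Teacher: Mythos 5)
Your proof is correct and follows essentially the same route as the paper's: the paper takes a maximal $\varepsilon$-disjoint subcollection and invokes its Lemma~\ref{L-frac} (whose proof is exactly your double-counting of $\sum_{i}\mu(A_i\cap U)$ against the multiplicity $M$) to show that a small union would contradict maximality, which is the contrapositive packaging of your greedy termination argument. The only cosmetic difference is that you inline the counting estimate rather than quoting it as a separate lemma.
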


\begin{proof}
Take a maximal $\varepsilon$-disjoint subcollection
$\{ A_i \}_{i\in J}$ of $\{ A_i \}_{i\in I}$.
If this does not $\varepsilon (1-\delta )$-cover $X$ then by Lemma~\ref{L-frac} there is an
$i_0 \in I$ such that
\[ \frac{\mu \big( A_{i_0} \cap \hspace*{0.5mm} \bigcup_{i\in J} A_i \big)}{\mu (A_{i_0} )}
 \leq \frac{\mu \big( \bigcup_{i\in J} A_i \big)}{(1-\delta )\mu (X)} < \varepsilon \]
so that by adding $A_{i_0}$ to the collection $\{ A_i \}_{i\in J}$ we again have an
$\varepsilon$-disjoint collection, contradicting maximality.
\end{proof}


\begin{lemma} \label{L-Rokhlin}
Let $G$ be a countable discrete group.
Let $0\le \tau<1$, and $0<\eta<1$. Then there are an $\ell\in \Nb$ and $\eta', \eta''>0$
such that, whenever $e\in F_1\subseteq F_2\subseteq \cdots \subseteq F_\ell$ are finite subsets of $G$ with $|(F_{k-1}^{-1}F_k) \setminus F_k|\le \eta'|F_k|$
for $k=2, \dots, \ell$, there exists a finite set $F\subseteq G$ containing $e$
such that
for every $d\in \Nb$, every map $\sigma: G\rightarrow \Sym(d)$ with a set $B\subseteq \{1, \dots, d\}$ satisfying
$|B|\ge (1-\eta'')d$ and
\[\sigma_{st}(a)=\sigma_s\sigma_t(a), \sigma_s(a)\neq \sigma_{s'}(a), \sigma_e(a)=a\]
for all $a\in B$ and $s, t, s'\in F$ with $s\neq s'$, and any set $V\subseteq \{1, \dots, d\}$ with $|V|\ge (1-\tau)d$,
there exist $C_1, \dots, C_\ell\subseteq V$ such that
\begin{enumerate}

\item for every $k=1, \dots, \ell$ and $c\in C_k$, the map $s\mapsto \sigma_s(c)$ from $F_k$ to $\sigma(F_k)c$ is bijective,

\item the sets $\sigma(F_1)C_1, \dots, \sigma(F_\ell)C_\ell$ are pairwise disjoint
and the family $\bigcup_{k=1}^\ell\{\sigma(F_k)c: c\in C_k\}$ is $\eta$-disjoint and $(1-\tau-\eta)$-covers $\{1, \dots, d\}$.
\end{enumerate}
\end{lemma}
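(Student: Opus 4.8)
The plan is to adapt the Ornstein--Weiss quasitiling scheme to the sofic setting, building the tiles greedily from the largest set $F_\ell$ down to the smallest $F_1$ and using Lemma~\ref{L-disjointcover} to guarantee a fixed fraction of progress at each stage. Take $F=F_\ell^{-1}F_\ell$, which contains $e$ and all the products $s^{-1}t$ with $s,t\in F_\ell$ that will occur; note that for any $c\in B$ the freeness hypothesis makes $s\mapsto\sigma_s(c)$ injective on $F_k\subseteq F$, so condition~(i) holds automatically and $|\sigma(F_k)c|=|F_k|$ whenever the centre lies in $B$. I would fix the parameters in the order the statement demands: given $\tau,\eta$, first choose $\delta\in(0,1)$ with $\delta>\tau/(\tau+\eta)$ (possible as $\tau/(\tau+\eta)<1$), then choose $\ell$ so large that $(1-\eta(1-\delta))^\ell\le\tau+\eta$, and finally choose $\eta',\eta''>0$ small enough that the error quantity $\epsilon_0$ introduced below (a fixed multiple of $\eta'+\eta''$, depending only on $\eta$) satisfies $(\tau+\epsilon_0)/\delta\le\tau+\eta$.

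The core is a single covering step. Suppose disjoint sets $T_\ell,\dots,T_{k+1}$ have already been placed, each $T_j=\sigma(F_j)C_j$ a union of full $F_j$-tiles with centres $C_j\subseteq B\cap V$ that form an $\eta$-disjoint family, and let $W=\{1,\dots,d\}\setminus\bigcup_{j>k}T_j$ be the uncovered region. Let $Z_k$ be the set of $c\in B\cap V\cap W$ with $\sigma(F_k)c\subseteq W$. I claim $\{\sigma(F_k)c:c\in Z_k\}$ is a $\delta$-even covering of $W$ with multiplicity $M=|F_k|$. The multiplicity bound is free: a point $a$ lies in $\sigma(F_k)c$ only if $c\in\{\sigma_s^{-1}(a):s\in F_k\}$, a set of at most $|F_k|$ elements, since each $\sigma_s$ is a genuine permutation; hence $\sum_{c\in Z_k}\mathbf{1}_{\sigma(F_k)c}\le|F_k|$. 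Because each tile is full, the coverage bound reduces to $|Z_k|\ge(1-\delta)|W|$, as then $\sum_{c\in Z_k}\zeta(\sigma(F_k)c)=|Z_k|\,|F_k|/d\ge(1-\delta)|F_k|\,|W|/d=(1-\delta)M\zeta(W)$. Lemma~\ref{L-disjointcover} then yields an $\eta$-disjoint subfamily $\{\sigma(F_k)c:c\in C_k\}$ with $C_k\subseteq Z_k\subseteq W$ that $\eta(1-\delta)$-covers $W$; putting $T_k=\sigma(F_k)C_k\subseteq W$ keeps the $T_j$ pairwise disjoint and removes at least $\eta(1-\delta)|W|$ points from the uncovered region.

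Everything hinges on the estimate $|Z_k|\ge(1-\delta)|W|$, and this is the step I expect to be the main obstacle. A centre $c\in B\cap V\cap W$ fails to lie in $Z_k$ only if $\sigma(F_k)c$ meets $W^c=\bigcup_{j>k}T_j$, i.e. $c\in(\bigcup_{s\in F_k}\sigma_s^{-1}(W^c))\setminus W^c$; the crude bound $|F_k|\,|W^c|$ is useless, so I would process $W^c$ tile by tile. For a previously placed tile $\sigma(F_j)c'$ with $c'\in B$, the multiplicativity and identity relations on $B$ give $\sigma_s^{-1}(\sigma_t(c'))=\sigma_{s^{-1}t}(c')$ for $s\in F_k$, $t\in F_j$, so modulo the global exceptional set $\{1,\dots,d\}\setminus B$ one has $\bigcup_{s\in F_k}\sigma_s^{-1}(\sigma(F_j)c')\subseteq\sigma(F_k^{-1}F_j)c'$. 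Since $k<j$ forces $F_k\subseteq F_{j-1}$, the nestedness hypothesis gives $|(F_k^{-1}F_j)\setminus F_j|\le|(F_{j-1}^{-1}F_j)\setminus F_j|\le\eta'|F_j|$, so each such tile contributes at most $\eta'|F_j|$ new centres outside $W^c$. Summing over $j>k$ and using $\eta$-disjointness to bound $\sum_{j>k}|C_j|\,|F_j|\le(1-\eta)^{-1}|W^c|\le(1-\eta)^{-1}d$, the total number of offending centres is at most $\eta''d+(1-\eta)^{-1}\eta'd$. Discarding in addition the at most $|V^c|\le\tau d$ centres outside $V$ and the at most $|B^c|\le\eta''d$ centres outside $B$, one gets $|Z_k|\ge|W|-(\tau+\epsilon_0)d$ with $\epsilon_0=2\eta''+(1-\eta)^{-1}\eta'$. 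While $|W|\ge(\tau+\eta)d$ the choice of $\delta$ makes $(\tau+\epsilon_0)d\le\delta|W|$, giving the claim; once $|W|<(\tau+\eta)d$ the target coverage is already attained.

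Finally I would assemble the pieces. Iterating the covering step for $k=\ell,\ell-1,\dots,1$ multiplies $|W|$ by at most $1-\eta(1-\delta)$ at each stage as long as $|W|\ge(\tau+\eta)d$, so by the choice of $\ell$ the final uncovered region $W_{\mathrm{final}}$ has $|W_{\mathrm{final}}|\le(\tau+\eta)d$. By construction the $T_1,\dots,T_\ell$ are pairwise disjoint, each $\sigma(F_k)c$ is a full $F_k$-tile with $c\in C_k\subseteq V$, and the whole family $\bigcup_{k=1}^\ell\{\sigma(F_k)c:c\in C_k\}$ is $\eta$-disjoint, since the $\eta$-disjoint cores chosen within each level remain pairwise disjoint across levels because the $T_k$ are disjoint. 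As $|\bigcup_k T_k|=\sum_k|T_k|=d-|W_{\mathrm{final}}|\ge(1-\tau-\eta)d$, the family $(1-\tau-\eta)$-covers $\{1,\dots,d\}$, which is precisely~(ii).
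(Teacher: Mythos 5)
Your proof is correct and follows essentially the same route as the paper's: a greedy recursion from $F_\ell$ down to $F_1$, applying Lemma~\ref{L-disjointcover} at each stage and bounding the forbidden centres tile by tile via the approximate invariance $|(F_k^{-1}F_j)\setminus F_j|\le \eta'|F_j|$ together with $\eta$-disjointness. The only difference is bookkeeping: you run the even-covering argument inside the residual set $W$, so the uncovered mass decays geometrically and $\ell$ is read off from $(1-\eta(1-\delta))^\ell\le\tau+\eta$, whereas the paper takes even coverings of all of $\{1,\dots,d\}$ and tracks the covered fraction through the explicit recursion $t_{n+1}=\eta(1-\tau-\eta''-(1+\eta'/(1-\eta))t_n)+t_n$.
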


\begin{proof}
Take $\eta', \eta''>0$ such that $1-\tau-2\eta''>0$, $\eta(1+\eta'/(1-\eta))<1$,
and $(1-\tau-2\eta'')(1+\eta'/(1-\eta))^{-1}>1-\tau-\eta$.
Define an increasing sequence $\{t_n\}_{n\in \Nb}$ in $[0, +\infty)$ by setting $t_1=\eta(1-\tau-\eta'')$
and, for $n\in\Nb$,
\[
t_{n+1} = \eta\bigg(1-\tau-\eta''-\bigg(1+\frac{\eta'}{1-\eta} \bigg) t_n \bigg) + t_n
\]
if $1-\tau-\eta''- (1+\eta'/(1-\eta )) t_n\ge 0$ and $t_{n+1} = t_n$ otherwise.
It is easily checked that $1-\tau-\eta''-(1+\eta'/(1-\eta))\lim_{n\to \infty}t_n\le 0$.
Thus there exists some $\ell\in \Nb$ with $1-\tau-\eta''-(1+\eta'/(1-\eta))t_\ell< \eta''$ and $t_1<t_2<\dots <t_\ell$. Then $t_\ell \ge (1-\tau-2\eta'')(1+\eta'/(1-\eta))^{-1}>1-\tau-\eta$.

Suppose now that
  $e\in F_1\subseteq F_2\subseteq \cdots \subseteq F_\ell$
  are finite subsets of $G$ with
  $|(F_{k-1}^{-1}F_k) \setminus F_k|\le \eta'|F_k|$
  for $k=2, \dots, \ell$. Set $F=F_\ell\cup F_\ell^{-1}$. Note that for every $c\in B$ and $k=1, \dots, \ell$ the map $s\mapsto \sigma_s(c)$ from $F_k$ to $\sigma(F_k)c$ is bijective. We will recursively construct the sets $C_1, \dots, C_\ell$ in reverse order
so that the sets $\sigma(F_1)C_1, \dots, \sigma(F_\ell)C_\ell$ are pairwise disjoint
and the family $\bigcup_{n=k+1}^\ell\{\sigma(F_n)c: c\in C_n\}$ is $\eta$-disjoint and $t_{\ell-k}$-covers $\{1, \dots, d\}$
for each $k=0,\dots ,\ell-1$. Since $t_\ell \geq 1-\tau-\eta$ we will thereby obtain condition (ii). Moreover
we will choose $C_1, \dots, C_\ell$ to be subsets of $B\cap V$ so that condition (i) holds automatically.

Note that $\sigma_{s^{-1}}\sigma_s(a)=\sigma_e(a)=a$ for all $a\in B$ and $s\in F_\ell$,
and thus for all distinct $a, c\in B$ and $s\in F_\ell$ we have
\[
\sigma_{s^{-1}}\sigma_s(a)\neq \sigma_{s^{-1}}\sigma_s(c),
\]
and hence
\[
\sigma_s(a)\neq \sigma_s(c).
\]

To begin the recursive construction we observe that
\[
\sum_{c\in B\cap V}|\sigma(F_\ell)c|=|F_\ell |\cdot |B\cap V|\ge |F_\ell|\cdot (1-\tau-\eta'')d ,
\]
so that the family $\{\sigma(F_\ell)c\}_{c\in B\cap V}$ is a $(\tau+\eta'')$-even covering of $\{1, \dots, d\}$ with
multiplicity $|F_\ell|$. By Lemma~\ref{L-disjointcover}, we can find a set
$C_\ell\subseteq B\cap V$ such that the family $\{\sigma(F_\ell)c\}_{c\in C_\ell}$ is $\eta$-disjoint and
$\eta(1-\tau-\eta'')$-covers $\{1, \dots, d\}$.

Suppose that $1\le k<\ell$ and we have found $C_{k+1}, \dots, C_\ell\subseteq B\cap V$ such that the sets
$\sigma(F_{k+1})C_{k+1}, \dots, \sigma(F_\ell)C_\ell$ are pairwise disjoint and
the family $\bigcup_{n=k+1}^\ell\{\sigma(F_n)c: c\in C_n\}$ is $\eta$-disjoint and $t_{\ell-k}$-covers
$\{1, \dots, d\}$. Set $t'_{\ell-k}=|\bigcup_{n=k+1}^\ell \sigma(F_n)C_n|/d$ and
$E=\big\{c\in B\cap V: \sigma(F_k)c\cap \big(\bigcup_{n=k+1}^\ell\sigma(F_n)C_n\big)=\emptyset\big\}$.
For every $c\in (B\cap V)\setminus E$ we have $\sigma_s(c)=\sigma_t(a)$ for some
$n=k+1, \dots, \ell$, $a\in C_n$, $t\in F_n$, and $s\in F_k$, and hence
\[
c=\sigma_{s^{-1}}\sigma_s(c)=\sigma_{s^{-1}}\sigma_t(a)=\sigma_{s^{-1}t}(a)\in \bigcup_{n=k+1}^\ell\sigma(F_k^{-1}F_n)C_n.
\]
Therefore
\[
(B\cap V)\setminus E\subseteq \bigcup_{n=k+1}^\ell\sigma(F_k^{-1}F_n)C_n.
\]
For every $n=k+1, \dots, \ell$, since the family $\{\sigma(F_n)c: c\in C_n\}$ is $\eta$-disjoint we have
\[
(1-\eta)|F_n|\cdot |C_n|\le |\sigma(F_n)C_n|.
\]
Thus
\begin{align*}
\bigg|\bigcup_{n=k+1}^\ell \sigma(F_k^{-1}F_n)C_n\bigg|
&\le \bigg|\bigcup_{n=k+1}^\ell \sigma((F_k^{-1}F_n)\setminus F_n)C_n\bigg|+\bigg|\bigcup_{n=k+1}^\ell \sigma(F_n)C_n\bigg|\\
&\le \sum_{n=k+1}^\ell |(F_k^{-1}F_n)\setminus F_n|\cdot |C_n|+t'_{\ell-k}d\\
&\le \sum_{n=k+1}^\ell |(F_{n-1}^{-1}F_n)\setminus F_n|\cdot |C_n|+t'_{\ell-k}d\\
&\le \sum_{n=k+1}^\ell \eta'|F_n|\cdot |C_n|+t'_{\ell-k}d\\
&\le \sum_{n=k+1}^\ell \frac{\eta'}{1-\eta}|\sigma(F_n)C_n|+t'_{\ell-k}d\\
&= \bigg(1+\frac{\eta'}{1-\eta}\bigg) t'_{\ell-k}d,
\end{align*}
where the last equality follows form the assumption that the sets
$\sigma(F_{k+1})C_{k+1}, \dots, \sigma(F_\ell)C_\ell$ are pairwise disjoint.
Therefore
\begin{align*}
|E|=|B\cap V|-|(B\cap V)\setminus E| &\ge (1-\tau-\eta'')d-\bigg|\bigcup_{n=k+1}^\ell \sigma(F_k^{-1}F_n)C_n\bigg|\\
&\ge (1-\tau-\eta'')d-\bigg(1+\frac{\eta'}{1-\eta}\bigg)t'_{\ell-k}d.
\end{align*}
It follows that
\[
\sum_{c\in E}|\sigma(F_k)c|=|F_k |\cdot |E|\ge |F_k|\cdot \bigg(1-\tau-\eta''-\bigg(1+\frac{\eta'}{1-\eta}\bigg)t'_{\ell-k}\bigg)d.
\]
Thus the family $\{\sigma(F_k)c\}_{c\in E}$ is a $(\tau+\eta''+(1+\eta'(1-\eta)^{-1})t'_{\ell-k})$-even covering of $\{1, \dots, d\}$ with
multiplicity $|F_k|$. By Lemma~\ref{L-disjointcover}, we can find a set
$C_k\subseteq E$ such that the family $\{\sigma(F_k)c\}_{c\in C_k}$ is $\eta$-disjoint and
$\eta(1-\tau-\eta''-(1+\eta'(1-\eta)^{-1})t'_{\ell-k})$-covers $\{1, \dots, d\}$. Then
the sets $\sigma(F_k)C_k, \dots, \sigma(F_\ell)C_\ell$ are pairwise disjoint,
 and the family $\bigcup_{n=k}^\ell\{\sigma(F_n)c: c\in C_n\}$
is $\eta$-disjoint. Because the family $\bigcup_{n=k+1}^\ell\{\sigma(F_n)c: c\in C_n\}$ $t_{\ell-k}$-covers
$\{1, \dots, d\}$, we have $t'_{\ell-k}\ge t_{\ell-k}$.  Since $\eta(1+\eta'/(1-\eta))<1$, we get
\begin{align*}
\bigg|\bigcup_{n=k}^\ell\sigma(F_n)C_n\bigg|&=|\sigma(F_k)C_k|+\bigg|\bigcup_{n=k+1}^\ell\sigma(F_n)C_n\bigg|\\
&\ge \eta(1-\tau-\eta''-(1+\eta'(1-\eta)^{-1})t'_{\ell-k})d+t'_{\ell-k}d\\
&= (\eta(1-\tau-\eta''-(1+\eta'(1-\eta)^{-1})t'_{\ell-k})+t'_{\ell-k})d\\
&\ge (\eta(1-\tau-\eta''-(1+\eta'(1-\eta)^{-1})t_{\ell-k})+t_{\ell-k})d\\
&= t_{\ell-(k-1)}d,
\end{align*}
completing the recursive construction.
%
\end{proof}

For an amenable countable discrete group $G$, by \cite[Cor.\ 5.3]{Namioka}, there is a F{\o}lner sequence $\{F_n\}_{n\in \Nb}$ of $G$
satisfying $F_n\subseteq F_{n+1}$ and $F_n^{-1}=F_n$ for all $n\in \Nb$.
In particular, this is a two-sided F{\o}lner sequence.
By using $\eta$-disjointness to pass to a genuinely disjoint family,
we obtain from Lemma~\ref{L-Rokhlin} the following.

\begin{lemma}\label{L-Rokhlin2}
Let $G$ be an amenable countable discrete group.
Let $0\le \tau<1$, $0<\eta<1$, $K$ be a nonempty finite subset of $G$, and $\delta>0$.
Then there are an $\ell\in \Nb$, nonempty finite subsets $F_1, \dots, F_\ell$ of $G$ with $|KF_k \setminus F_k|<\delta |F_k|$ and
$|F_kK\setminus F_k|<\delta|F_k|$ for
all $k=1, \dots, \ell$, a finite set $F\subseteq G$ containing $e$,
and an $\eta'>0$ such that,
for every $d\in \Nb$, every map $\sigma: G\rightarrow \Sym(d)$ for which there is a set $B\subseteq \{1, \dots, d\}$ satisfying
$|B|\ge (1-\eta')d$ and
\[
\sigma_{st}(a)=\sigma_s\sigma_t(a), \sigma_s(a)\neq \sigma_{s'}(a), \sigma_e(a)=a
\]
for all $a\in B$ and $s, t, s'\in F$ with $s\neq s'$, and every set $V\subseteq \{1, \dots, d\}$ with $|V|\ge (1-\tau)d$,
there exist $C_1, \dots, C_\ell\subseteq V$ such that
\begin{enumerate}
\item for every $k=1, \dots, \ell$, the map $(s, c)\mapsto \sigma_s(c)$ from $F_k\times C_k$ to $\sigma(F_k)C_k$ is bijective,

\item the family $\{ \sigma(F_1)C_1, \dots, \sigma(F_\ell)C_\ell \}$ is disjoint and $(1-\tau-\eta)$-covers $\{1, \dots, d\}$.
\end{enumerate}
\end{lemma}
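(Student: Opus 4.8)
The plan is to peel off the two features that go beyond Lemma~\ref{L-Rokhlin}: the two-sided $K$-invariance of the tiles, and the upgrade from $\eta$-disjointness to genuine disjointness. I would fix the shapes $F_1,\dots,F_\ell$ once and for all, so that they meet both the nesting tolerance demanded by Lemma~\ref{L-Rokhlin} and the prescribed $K$-invariance, and then feed them into Lemma~\ref{L-Rokhlin} repeatedly, disjointifying its output and exhausting the uncovered part. The hard part will be keeping the covering loss under control through this exhaustion.

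\emph{Choosing the shapes.} Using \cite[Cor.\ 5.3]{Namioka} I fix a nested symmetric two-sided F{\o}lner sequence $\{\tilde F_n\}$. Since $\tau+\eta\ge 1$ renders the conclusion vacuous I assume $\tau+\eta<1$ and set $\tau_0=1-\eta/2$, so that every "uncovered ratio'' encountered below stays $\le\tau_0<1$. Applying Lemma~\ref{L-Rokhlin} with parameters $\tau_0$ and an auxiliary $\eta_0\in(0,\eta)$ produces the number of scales $\ell$ together with a nesting constant $\kappa>0$ and a soficity tolerance $\eta''>0$. I then pick indices $n_1<\dots<n_\ell$ recursively and set $F_k=\tilde F_{n_k}$: having fixed $F_1,\dots,F_{k-1}$, I take $n_k$ so large that $|KF_k\setminus F_k|<\delta|F_k|$, $|F_kK\setminus F_k|<\delta|F_k|$, and $|(F_{k-1}^{-1}F_k)\setminus F_k|\le\sum_{s\in F_{k-1}}|sF_k\setminus F_k|\le\kappa|F_k|$, the latter estimates coming from the F{\o}lner property of $\tilde F_{n_k}$ relative to the already-fixed finite sets $K$ and $F_{k-1}$. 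Lemma~\ref{L-Rokhlin} now supplies $F\ni e$, and I set the output tolerance $\eta':=\eta''$.

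\emph{One disjointification.} Given $\sigma$, $B$, $V$ and any target $V'$ with $|V'|\ge(1-\tau_0)d$, Lemma~\ref{L-Rokhlin} returns $C_1,\dots,C_\ell\subseteq V'$ with the blocks $\sigma(F_k)C_k$ pairwise disjoint and the family $\bigcup_k\{\sigma(F_k)c:c\in C_k\}$ both $\eta_0$-disjoint and $(1-\tau_0-\eta_0)$-covering. Genuine disjointness can fail only inside one block, where all tiles have the common size $|F_k|$. I pass to a maximal pairwise-disjoint subcollection $\{\sigma(F_k)c\}_{c\in C_k^\circ}$ and estimate its coverage through the disjoint cores $\widehat A_c\subseteq\sigma(F_k)c$ with $|\widehat A_c|\ge(1-\eta_0)|F_k|$ furnished by $\eta_0$-disjointness: each discarded tile meets a retained one, its centre lies in $\sigma(F_k^{-1}F_k)C_k^\circ$, and the disjoint cores of all tiles meeting a fixed retained tile sit inside a set of size $|F_kF_k^{-1}F_k|$, so one retained tile absorbs at most $|F_kF_k^{-1}F_k|/((1-\eta_0)|F_k|)$ tiles. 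Hence $\{\sigma(F_k)c\}_{c\in C_k^\circ}$ covers a definite fraction $\lambda>0$ of $\sigma(F_k)C_k$, and condition (i) holds for the $C_k^\circ$.

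\emph{Exhaustion and the main obstacle.} A single pass covers only a fixed fraction, so I iterate with the same data $F_1,\dots,F_\ell,F,\eta'$. In round $r$ let $U_r$ be the still-uncovered set and apply the previous step with $V'$ equal to the part of $U_r\cap V$ lying far enough inside $U_r$ that the new tiles avoid all earlier ones; genuine disjointness is then maintained across rounds and conditions (i)--(ii) persist for the accumulated centre sets. While this interior has at least $(1-\tau_0)d=(\eta/2)d$ points, each round adds at least $\lambda(1-\tau_0-\eta_0)d$ newly covered points, so the process halts after a number of rounds bounded in terms of $\tau,\eta,\eta_0,\lambda$. The delicate point, and the step I expect to be the real obstacle, is to bound uniformly in $d$ the covering that is lost both in each disjointification (the factor $\lambda$, governed by $|F_kF_k^{-1}F_k|/|F_k|$) and, more seriously, in restricting to the interior of the uncovered set, which can a priori be badly fragmented. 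Controlling this boundary loss — so that the accumulated deficit stays below $\eta-\eta_0$ and the final uncovered set drops below $(\tau+\eta)d$ — is where the near-invariance of the $F_k$ must be played off against a sufficiently small choice of $\eta_0$.
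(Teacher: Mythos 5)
Your choice of the shapes $F_k$ from a symmetric nested F{\o}lner sequence, so as to meet simultaneously the nesting tolerance of Lemma~\ref{L-Rokhlin} and the two-sided $K$-invariance, matches the paper. The divergence, and the gap, is in how you upgrade $\eta$-disjointness to genuine disjointness. You discard tiles, passing to a maximal pairwise disjoint subcollection within each scale, and then try to recover the lost coverage by exhausting the uncovered set. The first step already loses too much: by your own estimate a maximal disjoint subfamily need only retain a fraction of order $|F_k|/|F_kF_k^{-1}F_k|$ of the coverage, and while this is a fixed positive constant once the shapes are fixed, it is nowhere near $1$ (and tends to $0$ along F{\o}lner sequences in, say, lamplighter groups). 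So everything rides on the exhaustion, and there the boundary loss you flag is not a delicate optimization but an uncontrollable one with the data as you have set it up: the admissible centres for round $r+1$ must avoid $\sigma(F_k^{-1})$ of the already covered region, and a covered tile $\sigma(F_j)c$ with $j<k$ contributes up to $|(F_k^{-1}F_j)\setminus F_j|$ newly excluded points. The hypotheses of Lemma~\ref{L-Rokhlin} control only the invariance of the \emph{larger} sets under the \emph{smaller} ones ($|(F_{k-1}^{-1}F_k)\setminus F_k|\le\eta'|F_k|$), never quantities like $|(F_\ell^{-1}F_1)\setminus F_1|$, which can be of order $|F_\ell|\cdot|F_1|$; summed over the scale-$1$ tiles the exclusion zone can exceed $d$ by a large factor. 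Hence the ``interior'' of the uncovered set can be empty while most of $V$ is still uncovered, the iteration stalls, and the accumulated coverage need not reach $(1-\tau-\eta)d$. You correctly identify this as the obstacle, but it is why this route fails rather than a technicality to be absorbed into a small $\eta_0$.

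The passage the paper intends is the opposite one: keep \emph{every} tile and shrink it. The definition of $\eta_0$-disjointness hands you pairwise disjoint cores $\widehat{A}_{k,c}=\sigma(F_{k,c})c$ with $F_{k,c}\subseteq F_k$ and $|F_{k,c}|\ge(1-\eta_0)|F_k|$, so the genuinely disjoint family of cores covers at least $(1-\eta_0)(1-\tau-\eta_0)d\ge(1-\tau-\eta)d$ once $\eta_0\le\eta/2$ --- a multiplicative loss of $1-\eta_0$ instead of your $\lambda$, with no iteration needed. The price is that the core shapes depend on $c$ and on $\sigma$, whereas the statement requires the shapes in advance; this is reconciled by enumerating, once the $F_k$ are fixed, all subsets $F'\subseteq F_k$ with $|F'|\ge(1-\eta_0)|F_k|$ as the output list of shapes. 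There are finitely many; each inherits $|KF'\setminus F'|<\delta|F'|$ and $|F'K\setminus F'|<\delta|F'|$ from $KF'\setminus F'\subseteq(KF_k\setminus F_k)\cup(F_k\setminus F')$ when $\eta_0$ and the invariance imposed on $F_k$ are small enough; and for a given $\sigma$ one sorts the centres by the shape of their core, empty classes being harmless. Conditions (i) and (ii) then hold verbatim for the refined family. To keep your discard-and-exhaust architecture you would need genuinely new input in the spirit of exact tiling theorems, whereas the core argument closes the proof in a few lines.
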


\section{Topological entropy in the amenable case}\label{S-topological}

We begin by recalling the classical definition of topological entropy \cite{AdlKonMcA65,Mou85}.
Let $G$ be an amenable countable discrete group and $\alpha$ a continuous action of $G$ on a compact metrizable space $X$.
For an open cover $\cU$ of $X$ we write $N(\cU )$ for the minimal cardinality of a subcover of $\cU$.
For a nonempty finite set $F\subseteq G$ we abbreviate $\bigvee_{s\in F} s^{-1} \cU$ to $\cU^F$.
As guaranteed by the subadditivity result in Section~6 of \cite{LinWei00}, for a finite open cover
$\cU$ of $X$ the quantities
\[
\frac{1}{|F|} \log N( \cU^F)
\]
converge to a limit as the nonempty finite set $F\subseteq G$ becomes more and more left invariant in the sense
that for every $\varepsilon > 0$ there are a nonempty finite set $K\subseteq G$ and a $\delta > 0$
such that the displayed quantity is within $\varepsilon$ of the limiting value whenever $|KF\Delta F| \leq \delta |F|$.
We write this limit as $h_\topol (\cU )$.
The classical topological entropy $h_\topol (X,G)$ is defined as the supremum of the quantities $h_\topol (\cU )$
over all finite open covers $\cU$ of $X$.

Given a F{\o}lner sequence $\{ F_k \}_{k=1}^\infty$ and a compatible metric $\rho$ on $X$,
the entropy $h_\topol (X,G)$ can be alternatively expressed as
\[
\sup_{\varepsilon > 0} \limsup_{k\to\infty} \frac{1}{|F_k|} \log N_\varepsilon (X,\rho_{F_k} )
\]
using the notation established in the introduction.
This approach to entropy using metrics was introduced by Rufus Bowen for $\Zb$-actions \cite{Bow71},
and the standard arguments showing its equivalence in that case with the open cover definition
apply equally well to the general amenable setting.

Let $\Sigma$ be a fixed sofic approximation sequence for $G$.
We will prove in this section that $h_\Sigma (X,G) = h_\topol (X,G)$.
The basis for the argument is the fact that
every good enough sofic approximation for $G$ can be approximately decomposed into copies of F{\o}lner sets
(Lemma~\ref{L-Rokhlin}). This decomposition implies that the maps in the definition of
sofic topological entropy approximately decompose into partial orbits over F{\o}lner sets.


\begin{lemma}\label{L-ineq1}
Let $G$ be an amenable countable discrete group acting continuously on a compact metrizable space $X$.
Then $h_\Sigma (X,G)\le h_\topol(X,G)$.
\end{lemma}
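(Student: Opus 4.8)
The plan is to bound the number of approximately equivariant homomorphisms $\varphi : C(X) \to \Cb^{d_i}$ by decomposing each sofic approximation into copies of a single F{\o}lner set and then counting the number of possible restrictions of $\varphi$ over each such copy in terms of the classical entropy. Fix a finite open cover $\cU$ of $X$; by the spatial reformulation in Proposition~\ref{P-topological entropy} it suffices to work with $h_{\Sigma,\infty}(\rho)$ for a dynamically generating compatible metric $\rho$, and via Lemma~\ref{L-infinity norm top} with $\rho_{\infty}$. Given $\varepsilon > 0$, the classical entropy $h_\topol(X,G)$ is, up to $\varepsilon$, the exponential growth rate of $N_\varepsilon(X, \rho_F)$ over left-invariant $F$; so I would first choose a F{\o}lner set $F$ (with $e \in F$, $F = F^{-1}$) so that $\frac{1}{|F|}\log N_{\varepsilon/2}(X, \rho_F)$ is within $\varepsilon$ of $h_\topol(X,G)$.

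The core of the argument is the Rokhlin decomposition. First I would apply Lemma~\ref{L-Rokhlin2} (or directly Lemma~\ref{L-Rokhlin}) with this $F$ to produce, for all sufficiently good $\sigma_i$, a disjoint family of tiles $\sigma_i(F)c$, $c \in C$, that $(1-\tau)$-covers $\{1,\dots,d_i\}$ with the map $s \mapsto \sigma_{i,s}(c)$ bijective onto each tile, where $\tau$ is chosen tiny relative to $\varepsilon$. The key observation is then that if $\varphi \in \Hom(\cS, F', \delta, \sigma_i)$ is approximately equivariant on a large finite set $F'$ of the group, then along each tile $\sigma_i(F)c$ the map $a \mapsto \varphi(a)$ looks approximately like a partial orbit: reading off the values $\varphi(\sigma_{i,s}(c))$ for $s \in F$ gives a point of $X^F$, and approximate equivariance forces this tuple to be $\varepsilon$-close (in $\rho_F$, after translating) to a genuine partial orbit $(s x_c)_{s\in F}$ for some $x_c \in X$. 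Consequently, up to a controlled error, the restriction of $\varphi$ to each tile is determined to within $\varepsilon/2$ by choosing one of at most $N_{\varepsilon/2}(X,\rho_F)$ many $\rho_F$-separated points.

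Counting then proceeds by multiplying over tiles: the number of $\varepsilon$-separated $\varphi$'s (with respect to $\rho_\infty$) is bounded, up to the contribution from the uncovered fraction $\tau d_i$ of coordinates, by roughly $N_{\varepsilon/2}(X,\rho_F)^{|C|}$, where $|C| \le d_i/|F|$ since the tiles are disjoint. Taking logarithms, dividing by $d_i$, and using $|C|\,|F| \le d_i$ yields
\[
\frac{1}{d_i}\log N_\varepsilon\big(\Hom(\cS,F',\delta,\sigma_i),\rho_{\cS,\infty}\big)
\le \frac{1}{|F|}\log N_{\varepsilon/2}(X,\rho_F) + o(1) + (\text{error from } \tau),
\]
where the $\tau$-error accounts for the arbitrary values of $\varphi$ on the uncovered coordinates, bounded using a fixed finite $\varepsilon/2$-spanning set of $X$ and hence of the form $\tau \log N_{\varepsilon/2}(X,\rho)$, which can be made small by shrinking $\tau$. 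Letting $i \to \infty$, then $\delta \to 0$ and $F'$ increase, gives $h_{\Sigma,\infty}^\varepsilon(\rho) \le h_\topol(X,G) + O(\varepsilon)$, and finally supremizing over $\varepsilon$ gives the claim.

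The main obstacle I anticipate is making the passage ``approximate equivariance on a tile $\Rightarrow$ genuine partial orbit'' quantitatively correct and uniform. Two subtleties must be handled carefully: first, the Rokhlin lemma gives equivariance relations $\sigma_{st} = \sigma_s\sigma_t$ only on a large subset $B$ of $\{1,\dots,d_i\}$ and only for $s,t$ in a prescribed $F$, so I must ensure the enlarged set $F'$ and the ``good'' set $B$ are chosen before invoking the lemma and that the bijectivity in part~(i) of Lemma~\ref{L-Rokhlin2} is what converts the $\rho_2$ (or $\rho_\infty$) equivariance estimate into an $\rho_F$-estimate on the tuple $(\varphi(\sigma_{i,s}(c)))_{s\in F}$; second, the definition uses $\rho_2$ (an averaged quantity), so the equivariance defect $\delta$ controls the error only on average over coordinates, meaning I should either pass to $\rho_\infty$ via Lemma~\ref{L-infinity norm top} at the outset and argue that for most tiles the per-coordinate error is small, or absorb a further small-measure set of ``bad'' tiles into the $\tau$-type error term. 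Keeping all these error budgets ($\tau$, the F{\o}lner defect, $\delta$, and the fraction of bad coordinates) simultaneously under control, and in the right order of quantifiers, is the delicate bookkeeping at the heart of the proof.
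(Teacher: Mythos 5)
Your overall strategy is the same as the paper's: quasitile the sofic approximation space by F{\o}lner sets via the Rokhlin lemma, transfer $\varepsilon$-separation of maps on a tile $\sigma(F)c$ to $\rho_F$-separation of the base values $\varphi(c)$ in $X$, and bound the count by a product of quantities $N_{\varepsilon/4}(X,\rho_{F_k})$ over tiles plus a crude spanning bound on the uncovered coordinates. However, two points need repair. The smaller one: you cannot work with a single F{\o}lner set $F$ and genuinely disjoint tiles covering all but a $\tau$-fraction of $\{1,\dots,d_i\}$. The Ornstein--Weiss disjointification (Lemma~\ref{L-disjointcover}) extracts only an $\eta$-proportion covering from translates of one set, and the near-full coverage in Lemmas~\ref{L-Rokhlin} and \ref{L-Rokhlin2} is achieved precisely by iterating over a tower $F_1\subseteq\cdots\subseteq F_\ell$. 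This is harmless for your counting (replace $|C|\,|F|\le d_i$ by $\sum_k|C_k|\,|F_k|\le(1-\eta)^{-1}d_i$; the paper in fact uses the $\eta$-disjoint version of Lemma~\ref{L-Rokhlin} and shrinks each $F_k$ to a subset $F_{k,c}$ with $|F_{k,c}|\ge(1-\eta)|F_k|$ to obtain genuine disjointness), but the decomposition you invoke must be the $\ell$-set one.

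The substantive gap is in the passage from the averaged $\rho_2$-equivariance to pointwise control. For each $\varphi$ the set $\Lambda_\varphi$ of coordinates $a$ with $\rho(\varphi(sa),s\varphi(a))<\sqrt{\delta}$ for all $s\in F_\ell$ has density at least $1-|F_\ell|\delta$, but it depends on $\varphi$. Your product-over-tiles bound requires the tiles, and in particular the base points $c$, to be good for \emph{every} map in the separated family simultaneously; ``absorbing a small set of bad tiles into the $\tau$-error'' does not work as stated, because different $\varphi$'s have different bad tiles and the tile-by-tile product bound then fails to apply to the whole family. The paper's fix is a pigeonhole step: the number of possible sets $\Lambda_\varphi$ is at most $\sum_{j\le n\delta d}\binom{d}{j}\le\exp(\beta d)$ with $\beta\to 0$ as $\delta\to 0$ (Stirling), so one passes to a subfamily $W$ of the maximal $(\rho_\infty,\varepsilon)$-separated set $D$ with $|D|\le\exp(\kappa d)|W|$ on which $\Lambda_\varphi$ equals a common set $\Theta$, and only then invokes the Rokhlin lemma with $V=\Theta$. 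Without this uniformization (or an equivalent device) the counting does not close. The rest of your outline --- the triangle-inequality transfer $\rho_{F_{k,c}}(\varphi(c),\psi(c))\ge\max_{s}\rho(\varphi(sc),\psi(sc))-2\sqrt{\delta}$ and the order of quantifiers ($\varepsilon$, then the $F_k$, then $\delta$, then $i$) --- matches the paper and is sound.
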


\begin{proof}
We may assume that $h_\topol(X,G)<\infty$. Let $\rho$ be a compatible metric on $X$.
Let $\varepsilon , \kappa > 0$. To establish the lemma, by Proposition~\ref{P-topological entropy} it suffices to show that
$h^{\varepsilon}_{\Sigma ,\infty} (\rho)\le h_\topol(X,G) + 4\kappa$.

There are a nonempty finite subset $K$ of $G$ and $\delta'>0$ such that
$N_{\varepsilon/4}(X, \rho_{F'})<\exp((h_\topol(X,G) + \kappa)|F'|)$ for every nonempty finite subset
$F'$ of $G$ satisfying $|KF'\setminus F'|<\delta'|F'|$.

Take an $\eta\in (0,1)$ such that $(N_{\varepsilon/4}(X, \rho ))^{2\eta} \leq \exp(\kappa)$ and
$(1-\eta )^{-1} (h_\topol(X,G) + \kappa) \leq h_\topol(X,G) + 2\kappa$.
%
Let $\ell\in\Nb$ and $\eta'> 0$ be as given by Lemma~\ref{L-Rokhlin} with respect to $\eta$ and $\tau = \eta$.
Take finite subsets
$e\in F_1 \subseteq F_2 \subseteq \dots \subseteq F_\ell$
of $G$ such that $|(F_{k-1}^{-1} F_k )\setminus F_k| \leq \eta' |F_k |$ for $k=2, \dots , \ell$ and $|KF_k\setminus F_k|<\delta'|F_k|$ for every $k=1, \dots, \ell$. Then
\begin{align} \label{E-top upper}
N_{\varepsilon/4}(X, \rho_{F_k}) \leq \exp((h_\topol(X,G) + \kappa)|F_k |)
\end{align}
for every $k=1, \dots, \ell$.

Let $\delta > 0$ be a small positive number which we will determine in a moment.
Let $\sigma$ be a map from $G$ to $\Sym (d)$ for some $d\in\Nb$ which is a good enough sofic approximation for $G$.
We will show that $N_{\varepsilon}(\Map(\rho, F_{\ell}, \delta, \sigma), \rho_{\infty})\le \exp((h_\topol(X,G) + 4\kappa)d$,
which will complete the proof since we can then conclude that
$h^{\varepsilon}_{\Sigma ,\infty}(\rho, F_\ell, \delta)\le h_\topol(X,G) + 4\kappa$
and hence $h^{\varepsilon}_{\Sigma ,\infty}(\rho)\le h_\topol(X,G) + 4\kappa$.

For every $\varphi\in\Map (\rho ,F_\ell, \delta, \sigma )$, we have
$\rho_2(\varphi\circ \sigma_{s}, \alpha_s\circ \varphi) < \delta$
for all $s\in F_\ell$. Thus the set $\Lambda_{\varphi}$
of all $a\in\{ 1,\dots ,d \}$ such that
\[
\rho(\varphi (sa), s\varphi(a)) < \sqrt{\delta}
\]
for all $s\in F_\ell$ has cardinality at least $(1-|F_\ell |\delta) d$.

For each $J\subseteq \{1, \dots, d\}$ we
define on the set of maps from $\{1, \dots, d\}$ to $X$ the pseudometric
\[
\rho_{J,\infty} (\varphi , \psi ) = \rho_{\infty} (\varphi|_J,  \psi|_J).
\]

Take a $(\rho_\infty, \varepsilon )$-separated subset $D$ of $\Map (\rho ,F_\ell, \delta , \sigma )$
of maximal cardinality.

Set $n = |F_\ell |$. When $n\delta <1$,
the number of subsets of $\{1, \dots, d\}$ of cardinality no greater than $n\delta d$
is equal to $\sum_{j=0}^{\lfloor n\delta d \rfloor} \binom{d}{j}$, which is at most
$n\delta d \binom{d}{n\delta d}$,
which by Stirling's approximation is less than $\exp(\beta d)$
for some $\beta > 0$ depending on $\delta$ and $n$
but not on $d$ when $d$ is sufficiently large with $\beta\to 0$ as $\delta\to 0$ for a fixed $n$.
Thus when $\delta$ is small enough and $d$ is large enough, there is a subset $W$ of $D$ with $\exp(\kappa d)|W|\ge |D|$
such that the set $\Lambda_{\varphi}$ is the same, say $\Theta$, for every $\varphi\in W$,
and $|\Theta|/d>1-\eta$.

Since we chose $\ell$ and $\eta'$ so that the conclusion of Lemma~\ref{L-Rokhlin} holds, when
$\sigma$ is a good enough sofic approximation
for $G$, there exist $C_1, \dots, C_\ell \subseteq \Theta$ such that
\begin{enumerate}
\item for all $k=1, \dots, \ell$ and $c\in C_k$
the map $s\mapsto \sigma_s(c)$ from $F_k$ to $\sigma(F_k)c$ is bijective,

\item the family $\bigcup_{k=1}^\ell\{\sigma(F_k)c: c\in C_k\}$ is $\eta$-disjoint
and $(1-2\eta)$-covers $\{1, \dots, d\}$.
\end{enumerate}
Denote by $\sL$ the set of all pairs $(k,c)$ such that $k\in \{ 1,\dots ,\ell\}$ and $c\in C_k$. By $\eta$-disjointness,
for every $(k, c)\in \sL$ we can find an $F_{k, c}\subseteq F_k$ with
$|F_{k, c}|\ge (1-\eta)|F_k|$ such that the sets $\sigma(F_{k, c})c$ for $(k, c)\in \sL$ are pairwise disjoint.

Let $(k, c)\in \sL$. Take an $(\varepsilon/2)$-spanning subset $V_{k,c}$ of $W$ with respect to
$\rho_{\sigma(F_{k, c}) c,\infty}$ of minimal cardinality. We will show that
$|V_{k, c}|\le \exp((h_\topol(X,G) + \kappa)|F_k|)$ when $\delta$ is small enough.
To this end, let $V$ be an $(\varepsilon/2)$-separated subset of $W$ with respect to $\rho_{\sigma(F_{k, c})c ,\infty}$.
For any two distinct elements $\varphi$ and $\psi$ of $V$ we have, for every $s\in F_{k, c}$, since $c\in \Lambda_{\varphi}\cap \Lambda_{\psi}$,
\begin{align*}
\rho(s\varphi(c), s\psi(c))&\ge \rho(\varphi(sc), \psi(sc))-\rho(s\varphi(c), \varphi(sc))-\rho(s\psi(c), \psi(sc))\\
&\ge  \rho(\varphi(sc), \psi(sc))-2\sqrt{\delta},
\end{align*}
and hence
\begin{align*}
\rho_{F_{k, c}}(\varphi(c), \psi(c))&=\max_{s\in F_{k, c}}\rho(s\varphi(c), s\psi(c))\ge \max_{s\in F_{k, c}}\rho(\varphi(sc), \psi(sc))-2\sqrt{\delta}>\varepsilon/2-\varepsilon/4=\varepsilon/4,
\end{align*}
granted that $\delta$ is taken small enough.
Thus $\{\varphi(c) : \varphi\in V \}$ is a $(\rho_{F_{k, c}}, \varepsilon/4 )$-separated subset of $X$ of cardinality $|V|$,
so that
\begin{align*}
|V|\le N_{\varepsilon/4}(X, \rho_{F_{k, c}})\le N_{\varepsilon/4}(X, \rho_{F_k})
\overset{\eqref{E-top upper}}\le \exp((h_\topol(X,G) + \kappa)|F_k|).
\end{align*}
Therefore
\[
|V_{k, c}|
\le N_{\varepsilon/2}(W, \rho_{\sigma(F_{k, c})c ,\infty})\le \exp((h_\topol(X,G) + \kappa)|F_k|),
\]
as we wished to show.

Set
\[
H = \{ 1,\dots ,d\} \setminus \bigcup \{ \sigma (F_{k, c})c : (k,c)\in \sL\} .
\]
and take an $(\varepsilon/2)$-spanning subset $V_H$ of $W$ with respect to $\rho_{H,\infty}$ of minimal cardinality.
We have
\[
|V_H | \leq (N_{\varepsilon/4}(X, \rho ))^{|H|} \leq (N_{\varepsilon/4}(X, \rho ))^{2\eta d}.
\]

Write $U$ for the set of all maps
$\varphi : \{1, \dots, d\}\rightarrow X$ such that $\varphi|_H\in V_H|_H$ and
$\varphi|_{\sigma (F_{k, c})c}\in V_{k,c}|_{\sigma (F_{k, c})c}$ for all $(k,c)\in\sL$.
Then, by our choice of $\eta$,
\begin{align*}
|U | &= |V_H | \prod_{(k,c)\in\sL} |V_{k,c} |
\le (N_{\varepsilon/4}(X, \rho ))^{2\eta d} \exp\bigg(\sum_{(k, c)\in \sL}(h_\topol(X,G) + \kappa)|F_k|\bigg)\\
&= (N_{\varepsilon/4}(X, \rho ))^{2\eta d}\exp\bigg((h_\topol(X,G) + \kappa)\sum_{k=1}^\ell|F_k| |C_k|\bigg)\\
&\le \exp(\kappa d)
\exp\bigg(\frac{1}{1-\eta}(h_\topol(X,G) + \kappa)d\bigg)\\
&\le \exp(\kappa d)
\exp((h_\topol(X,G) + 2\kappa)d)=\exp((h_\topol(X,G) + 3\kappa)d).
\end{align*}

Now since every element of $W$ lies within $\rho_{\infty}$-distance $\varepsilon/2$ to an element
of $U$ and $W$ is $\varepsilon$-separated with respect to $\rho_{\infty}$,
the cardinality of $W$ is at most that of $U$.
Therefore
\begin{align*}
N_{\varepsilon}(\Map (\rho, F_\ell, \delta, \sigma), \rho_\infty)&=|D|\le \exp(\kappa d)|W|\le \exp(\kappa d)|U|\\
&\le \exp(\kappa d)\exp((h_\topol(X,G) + 3\kappa)d)\\
&=\exp((h_\topol(X,G) + 4\kappa)d),
\end{align*}
as desired.
\end{proof}

\begin{lemma}\label{L-ineq2}
Let $G$ be an amenable countable discrete group acting continuously on a compact metrizable space $X$.
Then $h_\Sigma (X,G)\ge h_\topol(X,G)$.
\end{lemma}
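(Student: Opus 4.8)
The plan is to construct, for each finite open cover of $X$ and each prescribed tolerance of approximate equivariance, an exponentially large family of pairwise $\rho_\infty$-separated maps lying in the relevant $\Map$-set, thereby forcing the sofic counting numbers to grow at least as fast as the classical cover entropy. Fix a compatible metric $\rho$ on $X$. It is dynamically generating (take $s=e$), so by Proposition~\ref{P-topological entropy} it suffices to prove $h_{\Sigma ,\infty}(\rho)\ge h_\topol(X,G)$, and since $h_\topol(X,G)=\sup_\cU h_\topol(\cU)$ over finite open covers, I fix a cover $\cU$ and aim at $h_{\Sigma ,\infty}(\rho)\ge h_\topol(\cU)$.

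The key preliminary is a \emph{uniform} lower bound on separated-set counts. Choosing $\varepsilon>0$ below half the Lebesgue number of $\cU$, a standard Lebesgue-number argument gives $N_\varepsilon(X,\rho_F)\ge N(\cU^F)$ for every nonempty finite $F\subseteq G$. Because $\frac{1}{|F|}\log N(\cU^F)$ \emph{converges} to $h_\topol(\cU)$ as $F$ becomes more invariant (the subadditivity recalled in Section~\ref{S-topological}), for every $\kappa>0$ there are a finite $K\subseteq G$ and $\delta_0>0$ with $N_\varepsilon(X,\rho_F)\ge\exp((h_\topol(\cU)-\kappa)|F|)$ whenever $|KF\setminus F|<\delta_0|F|$. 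Obtaining this bound uniformly over \emph{all} sufficiently invariant $F$, rather than merely along one F{\o}lner sequence, is the main obstacle: the Rokhlin lemma will hand me F{\o}lner shapes that I do not control precisely, and I can only guarantee they are very invariant. Passing through the cover entropy $h_\topol(\cU)$, which is a genuine limit, is exactly what makes this uniformity available.

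Now fix a finite $F'\subseteq G$ and $\delta'>0$. Apply Lemma~\ref{L-Rokhlin2} with $\tau=0$, a small $\eta>0$, the set $K\cup F'$ in place of $K$, and a sufficiently small F{\o}lner parameter, to obtain shapes $F_1,\dots,F_\ell$ (each so invariant that the previous bound applies and that $|F_k\setminus s^{-1}F_k|$ is tiny for all $s\in F'$), a finite $F\ni e$, and $\eta'>0$. For $\sigma_i$ a good enough sofic approximation, the lemma yields centre sets $C_k$, with centres in the set $B$ of well-behaved points, so that $(s,c)\mapsto\sigma_s(c)$ is bijective on $F_k\times C_k$ and $\{\sigma(F_k)C_k\}_k$ is disjoint and $(1-\eta)$-covers $\{1,\dots,d_i\}$. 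For each $k$ choose a maximal $(\rho_{F_k},\varepsilon)$-separated set $S_k\subseteq X$, so $|S_k|\ge\exp((h_\topol(\cU)-\kappa)|F_k|)$. To each assignment $f$ picking $f(k,c)\in S_k$ for every $(k,c)$ I associate the map $\varphi_f$ defined on the tile of $c\in C_k$ by $\varphi_f(\sigma_s(c))=s\,f(k,c)$ for $s\in F_k$, extended by a fixed basepoint off the tiles.

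Two verifications finish the argument. First, $\varphi_f\in\Map(\rho,F',\delta',\sigma_i)$: for $s\in F'$ and a tile point $a=\sigma_t(c)$ with $t,st\in F_k$ and $c\in B$, multiplicativity of $\sigma$ on $B$ gives $\sigma_s\sigma_t(c)=\sigma_{st}(c)$, whence $\varphi_f(\sigma_s a)=(st)f(k,c)=s\varphi_f(a)$ exactly; the exceptional points (the uncovered region, of density at most $\eta$, and the boundary indices $t\in F_k\setminus s^{-1}F_k$, of total density at most the F{\o}lner parameter) contribute at most $(\diam(X,\rho))^2$ times this density to $\rho_2(\varphi_f\circ\sigma_s,\alpha_s\circ\varphi_f)^2$, which is $<(\delta')^2$ once $\eta$ and the F{\o}lner parameter are small enough in terms of $\delta'$. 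Second, distinct assignments are $\rho_\infty$-separated by $\varepsilon$: if $f,g$ differ at some $(k,c)$, then $f(k,c),g(k,c)$ are $(\rho_{F_k},\varepsilon)$-separated, so some $s\in F_k$ has $\rho(sf(k,c),sg(k,c))\ge\varepsilon$, and at $a=\sigma_s(c)$ the two maps differ by at least $\varepsilon$. Hence, using $|\sigma(F_k)C_k|=|F_k||C_k|$ and the coverage bound, $N_\varepsilon(\Map(\rho,F',\delta',\sigma_i),\rho_\infty)\ge\prod_k|S_k|^{|C_k|}\ge\exp((h_\topol(\cU)-\kappa)(1-\eta)d_i)$. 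Taking $\limsup_i$, then $\inf$ over $\delta'$ (which lets $\eta\to0$ and removes the factor $1-\eta$ by monotonicity in $\delta'$), then $\inf$ over $F'$ and $\sup$ over $\varepsilon$, and finally $\kappa\to0$ and $\sup$ over $\cU$, yields $h_{\Sigma ,\infty}(\rho)\ge h_\topol(X,G)$, and hence $h_\Sigma(X,G)\ge h_\topol(X,G)$.
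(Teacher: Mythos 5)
Your proposal is correct and follows essentially the same route as the paper's proof: a Lebesgue-number choice of $\varepsilon$ giving $N_\varepsilon(X,\rho_{F'})\ge N(\cU^{F'})$, the uniform lower bound $\exp((h_\topol(\cU)-\kappa)|F_k|)$ for all sufficiently invariant shapes, Lemma~\ref{L-Rokhlin2} to tile the sofic approximation space, product assignments of $(\rho_{F_k},\varepsilon)$-separated points along the tiles to build $\rho_\infty$-separated elements of the $\Map$-set, and the resulting count $\prod_k|E_k|^{|C_k|}$. The only cosmetic difference is that you verify approximate equivariance via exact multiplicativity on the good set $B$ plus boundary/exceptional densities, whereas the paper folds this into the blanket hypothesis that $\sigma$ is a good enough sofic approximation; the content is identical.
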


\begin{proof}
Let $\rho$ be a compatible metric on $X$.
Let $\cU$ be a finite open cover of $X$, and let $\theta>0$.
To prove the lemma it suffices to show that $h_{\Sigma ,\infty} (\rho ) \geq h_\topol(\cU) - 2\theta$.

Take $\varepsilon>0$ such that every open $\varepsilon$-ball in $X$ with respect to $\rho$ is contained in some atom of
$\cU$. Then $N_{\varepsilon}(X, \rho_{F'})\ge N(\cU^{F'})$ for every nonempty finite subset $F'$ of $G$. Thus, when
$F'$ is sufficiently left invariant, one has $|F'|^{-1}\log N_{\varepsilon}(X, \rho_{F'})\ge  h_\topol(\cU) - \theta$.

Let $F$ be a nonempty finite subset of $G$ and $\delta > 0$. Let $\sigma$ be a map from $G$ to $\Sym (d)$
for some $d\in\Nb$.
Now it suffices to show that if $\sigma$ is a good enough sofic approximation then
\begin{align} \label{E-top lower}
\frac{1}{d} \log N_{\varepsilon} (\Map (\rho ,F,\delta ,\sigma ),\rho_{\infty} )\ge h_\topol(\cU) - 2\theta .
\end{align}



Take $\delta'>0$ such that $\sqrt{\delta'}\diam_{\rho}(X)<\delta/2$ and
$(1-\delta' )(h_\topol (\cU ) - \theta ) \ge h_\topol (\cU ) - 2\theta$.
By Lemma~\ref{L-Rokhlin2} there are an $\ell\in \Nb$
and nonempty finite subsets $F_1, \dots, F_\ell$ of $G$
which are sufficiently left invariant so that
\[
\inf_{k=1,\dots ,\ell} \frac{1}{|F_k |} \log N_{\varepsilon}(X, \rho_{F_k}) \ge h_\topol(\cU) - \theta
\]
such that for every map $\sigma : G\to\Sym(d)$ for some $d\in \Nb$ which is a good
enough sofic approximation for $G$ there exist  $C_1, \dots, C_\ell\subseteq \{1, \dots, d\}$ satisfying the following:
\begin{enumerate}
\item for every $k=1, \dots, \ell$, the map $(s, c)\mapsto \sigma_s(c)$ from $F_k\times C_k$ to $\sigma(F_k)C_k$ is bijective,

\item the family $\{ \sigma(F_1)C_1, \dots, \sigma(F_\ell)C_\ell \}$ is disjoint and $(1-\delta')$-covers $\{1, \dots, d\}$.
\end{enumerate}

For every $k\in\{ 1,\dots, \ell\}$ pick an $\varepsilon$-separated set
$E_k \subseteq X$ with respect to $\rho_{F_k}$ of maximal cardinality.
For each $h = (h_k )_{k=1}^\ell \in\prod_{k=1}^\ell (E_k )^{C_k}$
take a map $\varphi_h : \{1, \dots, d\}\rightarrow X$ such that
\[
\varphi_h(s c) = s(h_k (c))
\]
for all $k\in \{ 1,\dots ,\ell \}$, $c\in C_k$, and $s\in F_k$.
Observe that if $\max_{k=1,\dots ,\ell} |FF_k \Delta F_k| /|F_k|$ is small enough, as will be the case
if we take $F_1, \dots, F_\ell$ to be sufficiently left invariant, and $\sigma$ is a good enough sofic approximation for $G$, then
we will have $\rho_2(\alpha_s\circ \varphi_h,  \varphi_h\circ \sigma_s)< \delta$ for all
$s\in F$, so that $\varphi_h \in\Map (\rho , F ,\delta , \sigma )$.

Now if $h = (h_k )_{k=1}^\ell$ and $h' = (h_k' )_{k=1}^\ell$ are distinct elements of
$\prod_{k=1}^\ell (E_k )^{C_k}$, then $h_k (c) \neq h_k' (c)$ for some $k\in \{ 1,\dots ,\ell \}$ and $c\in C_k$.
Since $h_k (c)$ and $h_k' (c)$ are distinct points in $E_k$ which is $\varepsilon$-separated with respect to $\rho_{F_k}$,
$h_k (c)$ and $h_k' (c)$ are $\varepsilon$-separated with respect to $\rho_{F_k}$, and thus
we have
$\rho_{\infty} (\varphi_h ,\varphi_{h'} ) \ge \varepsilon$. Therefore
\begin{align*}
\frac1d \log N_{\varepsilon} (\Map (\rho ,F,\delta ,\sigma ),\rho_{\infty} )
&\geq \frac1d \sum_{k=1}^\ell |C_k | \log |E_k | \\
&\geq \frac1d \sum_{k=1}^\ell |C_k | |F_k | (h_\topol (\cU) - \theta ) \\
&\geq (1-\delta' )(h_\topol (\cU) - \theta ) \\
&\geq h_\topol (\cU ) - 2\theta ,
\end{align*}
as desired.
\end{proof}

Combining Lemmas~\ref{L-ineq1} and \ref{L-ineq2} we obtain the desired equality of entropies:

\begin{theorem}\label{T-amenable}
Let $G$ be an amenable countable discrete group acting continuously on a compact metrizable space $X$.
Let $\Sigma$ be a sofic approximation sequence for $G$. Then
\[
h_\Sigma (X,G) = h_\topol (X,G) .
\]
\end{theorem}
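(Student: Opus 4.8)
The plan is to derive the equality by squeezing $h_\Sigma (X,G)$ between $h_\topol (X,G)$ from above and from below, with each bound supplied by one of the two preceding lemmas. Lemma~\ref{L-ineq1} gives $h_\Sigma (X,G) \le h_\topol (X,G)$ and Lemma~\ref{L-ineq2} gives the reverse inequality $h_\Sigma (X,G) \ge h_\topol (X,G)$, so I would simply combine them to conclude $h_\Sigma (X,G) = h_\topol (X,G)$. The deduction is immediate, and since neither lemma imposes any restriction on the sofic approximation sequence beyond its being one, the common value is automatically independent of $\Sigma$, which is one of the features the paper advertises.

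Because the two lemmas already contain all of the estimates, nothing further needs to be computed at this stage; I would record only that the two opposite inequalities together force the equality, for every amenable countable discrete $G$ and every sofic approximation sequence $\Sigma$.

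It is worth recalling how the two lemmas realize this squeeze, since that is where all of the work lives. Both directions pass through the spatial description of sofic topological entropy (Proposition~\ref{P-topological entropy}), which permits one to work with approximately equivariant maps $\varphi : \{ 1,\dots ,d\} \to X$ and with $\rho_\infty$ in place of $\rho_2$. The common engine is the Rokhlin-type decomposition (Lemmas~\ref{L-Rokhlin} and \ref{L-Rokhlin2}): a sufficiently good sofic approximation can be tiled, up to a small uncovered remainder, by copies $\sigma (F_k )c$ of F{\o}lner sets $F_1 ,\dots ,F_\ell$. On each such tile an approximately equivariant map behaves like a genuine partial orbit $s \mapsto s(h_k (c))$ over $F_k$, which is exactly the object counted by the classical metric formula for $h_\topol (X,G)$. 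For the upper bound one then bounds the number of $\varepsilon$-separated maps by a product over tiles of $N_{\varepsilon /4}(X,\rho_{F_k})$ plus a negligible contribution from the uncovered part, and invokes the classical entropy estimate on each F{\o}lner set; for the lower bound one runs the construction in reverse, building distinct maps $\varphi_h$ from independent choices of $(\rho_{F_k},\varepsilon )$-separated partial orbits on disjoint tiles and checking that distinct choices yield $\rho_\infty$-separated maps.

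The hard part is therefore not the combination carried out in this theorem but the Rokhlin lemma that underlies both estimates; once it is in hand, controlling the error terms coming from $\eta$-disjointness and from the uncovered remainder is routine. Accordingly, I expect the final step here to be essentially a one-line citation of the two lemmas, with the substantive obstacle having already been surmounted upstream.
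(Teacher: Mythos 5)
Your proposal is correct and matches the paper exactly: the paper's proof of this theorem is precisely the one-line combination of Lemmas~\ref{L-ineq1} and \ref{L-ineq2}, and your summary of how those lemmas use Proposition~\ref{P-topological entropy} and the Rokhlin-type tiling accurately reflects where the real work is done.
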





\section{Measure entropy in the amenable case}\label{S-measure}

Let $G$ be an amenable countable discrete group acting on a standard probability space $(X,\mu )$
by measure-preserving transformations. The entropy of a measurable partition $\cQ$ of $X$ is defined by
\[
H_\mu (\cQ) = - \sum_{Q\in\cQ} \mu (Q) \log \mu (Q) .
\]
For a nonempty finite set $F\subseteq G$ we abbreviate $\bigvee_{s\in F} s^{-1} \cQ$ to $\cQ^F$.
By the subadditivity result in Section~6 of \cite{LinWei00}, for a finite measurable partition $\cQ$
of $X$ the quantities
\[
\frac{1}{|F|} \log H_\mu (\cQ^F )
\]
converge to a limit as the nonempty finite set $F\subseteq G$ becomes more and more left invariant in the sense
that for every $\varepsilon > 0$ there are a nonempty finite set $K\subseteq G$ and a $\delta > 0$
such that the displayed quantity is within $\varepsilon$ of the limiting value whenever $|KF\Delta F| \leq \delta |F|$.
We write this limit as $h_\mu (\cQ )$.
The classical Kolmogorov-Sinai measure entropy $h_\mu (X,G)$ is defined as the supremum of the quantities $h_\mu (\cQ )$
over all finite measurable partitions $\cQ$ of $X$.

Throughout this section $\Sigma = \{ \sigma_i : G\to\Sym (d_i ) \}_{i=1}^\infty$ is a fixed but arbitrary
sofic approximation sequence for $G$.
Our objective in this section is to show that
the sofic entropy $h_{\Sigma ,\mu} (X,G)$ agrees with the classical measure entropy $h_\mu (X,G)$.
The proof of the topological analogue of this equality
in Section~\ref{S-topological}
provides a basis for the argument, but the measure-preserving condition requires us in addition to keep
track of statistical distributions along orbits. For this we will need a particular form of the
Shannon-McMillan theorem which asserts, for infinite $G$, the $L^1$-convergence
of the mean information functions to the entropy function (Lemma~\ref{L-SM}). In the proof of Lemma~\ref{L-SM} and elsewhere
we also require the ergodic decomposition of entropy, which relies on the affineness of the entropy function \cite{Mou85}
(see \cite[Thm.\ 8.4]{Walters} for the $\Zb$-action case) and hence requires
$G$ to be infinite. The proof of Lemma~\ref{L-lower bound} also requires $G$ to be infinite
for different reasons.
We will therefore need to handle the case of finite $G$ separately, which we do
in Lemmas~\ref{L-finite upper bound} and \ref{L-finite lower bound}.

Given that the sofic measure entropy $h_{\Sigma ,\mu} (X,G)$ essentially amounts to counting unions of partial orbits
over F{\o}lner sets in the case that $G$ is amenable, our arguments will pass through some of the ideas in
the proof of Theorem~1.1 of \cite{Kat80}, which gives a formula for the entropy of an ergodic measure-preserving transformation
in terms of orbit growth in the spirit of Rufus Bowen's definition of topological entropy.
Note however that we do not assume our actions to be ergodic.

Consider a Borel action of a countable group $G$ on a standard Borel space $(X, \cB_X)$.
We consider the $\sigma$-algebra
\[
\cB_{X, G}=\{ A\in \mathcal{B}_X: sA=A \text{ for all } s \in G\}.
\]
Denote by $\cM(X, G)$ the set of $G$-invariant probability measures on $(X, \cB_X)$ and by $\cM^\erg (X, G)$ the set of $G$-invariant
ergodic probability measures on $(X, \cB_X)$. Assume that $\cM(X, G)$ is nonempty.
Endow $\cM^\erg (X, G)$ with the smallest $\sigma$-algebra making the functions
$\mu\mapsto \mu(A)$ on $\cM^\erg (X, G)$ measurable for all $A\in \cB_X$.
Then $\cM^\erg (X, G)$ is a standard Borel space (in particular, $\cM^\erg (X, G)$ is nonempty) and there is a surjective Borel map $X\rightarrow \cM^\erg (X, G)$ sending $x$ to $\mu_x$  \cite[Thm.\ 4.2 and p.\ 204]{Var63} satisfying the following conditions:
\begin{enumerate}
\item $\mu_{sx}=\mu_x$ for all $x\in X$ and $s\in G$,

\item for each $\nu\in \cM^\erg (X, G)$ if we set $X_{\nu}=\{x\in X: \mu_x=\nu\}$ then $\nu$ is the unique $\mu$ in
$\cM(X, G)$ satisfying $\mu(X_{\nu})=1$,

\item for every $\mu\in \cM(X, G)$ and $A\in \cB_{X}$ we have $\mu(A)=\int_X \mu_x(A)\, d\mu(x)$.
\end{enumerate}
Furthermore, this map is essentially unique in the sense that if $x\mapsto \mu'_x$ is another map satisfying the above conditions
then there exists an $A\in \cB_{X, G}$ such that $\mu(A)=0$ for every $\mu\in \cM(X, G)$ and $\mu_x=\mu'_x$ for all $x\in X\setminus A$.
It follows that $\mu=\int_X \mu_x\, d\mu(x)$ is the ergodic decomposition of $\mu$ for every $\mu\in \cM(X, G)$, and that for each $\mu\in \cM(X, G)$ and each $\Cb$-valued bounded Borel function $f$ on $X$ one has
$\Eb_{\mu}(f|\cB_{X, G})(x)=\int_X f\, d\mu_x$
for $\mu$-a.e.\ $x$, where $\Eb_{\mu}(f|\cB_{X, G})$ denotes the conditional expectation of $f$ in $L^{\infty}(f, \cB_{X, G}, \mu)$.

When $G$ is an amenable countably infinite discrete group, for any finite measurable partition $\cQ$ of $X$ and any $\mu\in \cM(X, G)$,
one has $h_{\mu}(\cQ)=\int_Xh_{\mu_x}(\cQ)\, d\mu(x)$, as one can deduce from \cite[Propositions 5.3.2 and 5.3.5]{Mou85} and the proof
in the case $G=\Zb$ in \cite[Theorem 8.4.(i)]{Walters}.

For a finite measurable partition $\cQ$ of $X$ and a $\mu\in \cM(X, G)$, the information function $I_{\mu}(\cQ)$ is defined by
\[
I_{\mu}(\cQ)(x)=-\sum_{Q\in \cQ}1_Q(x)\log \mu(Q)
\]
for all $x\in X$.

\begin{lemma} \label{L-SM}
Consider a Borel action of an amenable countably infinite discrete group $G$ on a standard Borel space $(X, \cB_X)$.
Let $\cQ$ be a finite measurable
partition of $X$ and $\mu\in \cM(X, \cB_X)$. Then the functions $\frac{1}{|F|}I_{\mu}(\cQ^F)$ converge to the function $x\mapsto h_{\mu_x}(\cQ)$
in $L^1(X, \cB_X, \mu)$ as the nonempty finite set $F\subseteq G$ becomes more and more left invariant in the sense
that for every $\varepsilon > 0$ there are a nonempty finite set $K\subseteq G$ and a $\delta > 0$
such that $\frac{1}{|F|}I_{\mu}(\cQ^F)$ is within $\varepsilon$ of the function $x\mapsto h_{\mu_x}(\cQ)$ in the $L^1$-norm
whenever $|KF\Delta F| \leq \delta |F|$.
\end{lemma}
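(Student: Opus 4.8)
The plan is to prove the $L^1$-convergence along an arbitrary left F{\o}lner sequence $\{F_n\}$; the stated net formulation then follows by a routine contradiction argument, since any family of sets that is eventually as left invariant as prescribed contains such a sequence. Throughout I use that $G$ is infinite, so that $|F_n|\to\infty$. Writing $\cP(x)$ for the atom of a finite partition $\cP$ containing $x$, and letting $I_{\mu_\cdot}(\cQ^{F_n})$ denote the function $x\mapsto I_{\mu_x}(\cQ^{F_n})(x)=-\log\mu_x(\cQ^{F_n}(x))$, I would split
\[
\frac{1}{|F_n|}I_\mu(\cQ^{F_n}) - h_{\mu_\cdot}(\cQ)
= \Big(\frac{1}{|F_n|}I_\mu(\cQ^{F_n}) - \frac{1}{|F_n|}I_{\mu_\cdot}(\cQ^{F_n})\Big)
+ \Big(\frac{1}{|F_n|}I_{\mu_\cdot}(\cQ^{F_n}) - h_{\mu_\cdot}(\cQ)\Big)
\]
and estimate the two bracketed terms in $L^1(X,\mu)$ separately.

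For the second bracket I would pass to the ergodic decomposition. Using properties (i)--(iii) of the map $x\mapsto\mu_x$ together with the fact that $\mu_x=\nu$ for $\nu$-a.e.\ $x$ whenever $\nu\in\cM^\erg(X,G)$, the $L^1(\mu)$-norm of this term equals
\[
\int_{\cM^\erg(X,G)}\Big(\int_X \big|\tfrac{1}{|F_n|}I_\nu(\cQ^{F_n})(x)-h_\nu(\cQ)\big|\,d\nu(x)\Big)\,dm(\nu),
\]
where $m$ is the pushforward of $\mu$ under $x\mapsto\mu_x$. For each fixed ergodic $\nu$ the inner integral tends to $0$ by the mean (i.e.\ $L^1$) Shannon--McMillan theorem for ergodic actions of amenable groups, which I would invoke from \cite{OrnWei87}. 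Since $\tfrac{1}{|F_n|}I_\nu(\cQ^{F_n})\ge 0$ and $h_\nu(\cQ)\ge 0$, the inner integral is bounded above by $\tfrac{1}{|F_n|}H_\nu(\cQ^{F_n})+h_\nu(\cQ)$, which by subadditivity $H_\nu(\cQ^{F_n})\le|F_n|H_\nu(\cQ)$ is at most $2\log|\cQ|$; dominated convergence over $\cM^\erg(X,G)$ then shows that the second bracket vanishes in $L^1(\mu)$.

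For the first bracket, set $g_n(x)=\log\frac{\mu_x(\cQ^{F_n}(x))}{\mu(\cQ^{F_n}(x))}$, so the term is $\tfrac{1}{|F_n|}g_n$ and its $L^1$-norm is $\tfrac{1}{|F_n|}\int_X|g_n|\,d\mu$. A direct computation, using that $x\mapsto\mu_x(P)$ is $\cB_{X,G}$-measurable and that $\Eb_\mu(\mathbf{1}_P|\cB_{X,G})=\mu_\cdot(P)$, gives both $\int_X e^{-g_n}\,d\mu=\sum_{P}\mu(P)=1$ and $\int_X g_n\,d\mu = H_\mu(\cQ^{F_n})-\int_X H_{\mu_x}(\cQ^{F_n})\,d\mu(x)$, the latter being nonnegative since entropy is concave in the measure. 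From $\int_X e^{-g_n}\,d\mu=1$ and the elementary inequality $\log^+ t\le t$ I obtain, for the negative part $g_n^-=\max(-g_n,0)$, that $\int_X g_n^-\,d\mu=\int_X\log^+(e^{-g_n})\,d\mu\le 1$, whence $\int_X|g_n|\,d\mu=\int_X g_n\,d\mu+2\int_X g_n^-\,d\mu\le\int_X g_n\,d\mu+2$. Finally $\tfrac{1}{|F_n|}\int_X g_n\,d\mu\to 0$: the summand $\tfrac{1}{|F_n|}H_\mu(\cQ^{F_n})$ converges to $h_\mu(\cQ)$ by definition of the entropy rate, while $\tfrac{1}{|F_n|}\int_X H_{\mu_x}(\cQ^{F_n})\,d\mu\to\int_X h_{\mu_x}(\cQ)\,d\mu=h_\mu(\cQ)$ by dominated convergence (the integrand is bounded by $\log|\cQ|$) together with the ergodic decomposition of entropy recalled before the lemma. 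Dividing the bound on $\int_X|g_n|\,d\mu$ by $|F_n|\to\infty$ shows the first bracket vanishes in $L^1(\mu)$ as well, completing the proof.

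The main obstacle is the mean Shannon--McMillan theorem for ergodic amenable actions used in the second step; everything else is a reduction. The nonnegativity of $\int_X g_n\,d\mu$, i.e.\ positivity of relative entropy, is what makes the sign control in the first step work, and the identity $\int_X e^{-g_n}\,d\mu=1$ is the device that converts one-sided control of the mean into two-sided $L^1$ control. The remaining delicate points are purely measure-theoretic: Borel measurability and $\mu$-integrability of the information functions $I_{\mu_\cdot}(\cQ^{F_n})$ and of $\nu\mapsto\int_X|\cdots|\,d\nu$, and the a.e.\ identification $\mu_x=\nu$ on the fibres $X_\nu$ needed to justify the ergodic-decomposition integrals; these I would dispatch using the standard properties of $x\mapsto\mu_x$ listed in the excerpt.
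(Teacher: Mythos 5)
Your argument is correct, but it reaches the conclusion by a genuinely different route than the paper. The paper invokes the Shannon--McMillan theorem in its non-ergodic form \cite[Thm.\ 4.4.2]{Mou85}, which already supplies an $L^1$-limit $f\in L^1(X,\cB_{X,G},\mu)$ of $\frac{1}{|F|}I_\mu(\cQ^F)$; the entire proof then consists of identifying $f$ with $x\mapsto h_{\mu_x}(\cQ)$. This is done by a one-sided estimate: for each $G$-invariant set $A$ of positive measure one compares $H_{\nu}(\cQ^F)$ for the normalized restriction $\nu=\mu(\,\cdot\cap A)/\mu(A)$ with $\frac{1}{\mu(A)}\int_A I_\mu(\cQ^F)\,d\mu$ via concavity of $t\mapsto -t\log t$, passes to the limit, and uses the ergodic decomposition of entropy to get $\int_A f\,d\mu\ge\int_A h_{\mu_\cdot}(\cQ)\,d\mu$, with equality for $A=X$ forcing a.e.\ equality. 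You instead assume only the ergodic $L^1$ Shannon--McMillan theorem and carry out the reduction to the ergodic case yourself, splitting off the relative information $g_n=\log\big(\mu_\cdot(\cQ^{F_n}(\cdot))/\mu(\cQ^{F_n}(\cdot))\big)$ and controlling it through the identity $\int_X e^{-g_n}\,d\mu=1$ (which converts the one-sided bound $\int_X g_n\,d\mu\ge 0$ into the two-sided bound $\int_X|g_n|\,d\mu\le\int_X g_n\,d\mu+2$) together with $\frac{1}{|F_n|}\int_X g_n\,d\mu\to 0$. Both proofs ultimately rest on the same two external inputs --- a mean Shannon--McMillan theorem and the ergodic decomposition of entropy --- but yours requires the weaker (ergodic) form of the former at the cost of the extra relative-entropy bookkeeping, and it exhibits explicitly why the fibrewise information governs the global one; the paper's is shorter because the reference already handles the non-ergodic case, and its identification step avoids any disintegration of the information function. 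Your reduction of the net statement to F{\o}lner sequences and your deferral of the measurability checks to the stated properties of $x\mapsto\mu_x$ are both sound (note that the $L^1$ Shannon--McMillan theorem, unlike the pointwise one, indeed holds along arbitrary left F{\o}lner sequences, so no temperedness hypothesis is needed).
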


\begin{proof}
By the Shannon-McMillian theorem \cite[Thm.\ 4.4.2]{Mou85}, there exists an $f\in L^1(X, \cB_{X, G}, \mu)$
such that the function $\frac{1}{|F|}I_{\mu}(\cQ^F)$ converges to $f$
in $L^1(X, \cB_X, \mu)$ as the nonempty finite set $F\subseteq G$ becomes more and more left invariant.
Set $g(x)= h_{\mu_x}(\cQ)$ for all $x\in X$. Then $g$ is a bounded $\cB_{X, G}$-measurable function on $X$. We just need to show that
$f(x)=g(x)$ for $\mu$-a.e. $x$.

We claim that $\int_Af\, d\mu\ge \int_Ag\, d\mu$ for all $A\in \cB_{X,G}$.
Let $A\in \cB_{X, G}$. We may assume that $\mu(A)>0$.
Define $\nu\in \cM(X, G)$ by $\nu(B)=\frac{1}{\mu(A)}\mu(B\cap A)$ for all $B\in \cB_X$.
Let $F$ be a nonempty finite subset of $G$.
Set $\xi(t)=-t\log t$ for $t\ge 0$. Then $\xi$ is concave on $[0, +\infty)$. Thus
\begin{align*}
H_{\nu}(\cQ^F)-\frac{1}{\mu(A)}\int_A I_{\mu}(\cQ^F)\, d\mu
&=\sum_{B \in \cQ^F}-\nu(B)\log \nu(B)-\sum_{B\in \cQ^F}-\nu(B)\log \mu(B)\\
&= \sum_{B\in \cQ^F}-\nu(B)\log \frac{\nu(B)}{\mu(B)}\\
&= \sum_{B\in \cQ^F}\mu(B)\xi \bigg(\frac{\nu(B)}{\mu(B)}\bigg)\\
&\le \xi\bigg(\sum_{B\in \cQ^F}\mu(B)\frac{\nu(B)}{\mu(B)}\bigg)\\
&=\xi(1)=0,
\end{align*}
where the inequality comes from the concavity of $\xi$. Dividing the above inequality
by $|F|$ and taking limits with $F$ becoming more and more left invariant, we get
$h_{\nu}(\cQ)-\frac{1}{\mu(A)}\int_Af\, d\mu\le 0$.
Thus
$$\int_A g\, d\mu=\mu(A)\int_X h_{\mu_x}(\cQ)\, d\nu(x)=\mu(A) h_{\nu}(\cQ)\le \int_Af\, d\mu.$$
This proves our claim. It follows that $f(x)-g(x)\ge 0$ for $\mu$-a.e.\ $x$.

For $A=X$ the argument in the above paragraph shows that $\int_Xf\, d\mu=\int_Xg\, d\mu$. Thus $f(x)-g(x)=0$ for $\mu$-a.e.\ $x$.
\end{proof}

%

\begin{lemma}\label{L-fixed}
Let $\kappa > 0$. Then there are $\delta_0 > 0$, $M\in\Nb$, and $\omega : \Nb\to (0,1)$ such that if
$F$ is a finite subset of a group $G$  with $|F|\geq M$, $\delta \in (0,\delta_0 )$,
$d\in\Nb$, and $\sigma : G\to\Sym (d)$ is a map with $\big| \bigcup_{s, t\in F, s\neq t} \{ k\in \{ 1,\dots ,d \} : \sigma_s (k) = \sigma_t (k) \} \big| \leq \delta d$, then the number of subsets $A\subseteq \{ 1,\dots ,d\}$ such that
$\max_{s\in F}| A\Delta\sigma_s (A)| \le \omega (|F|)d$ is at most $\exp(\kappa d)$.
\end{lemma}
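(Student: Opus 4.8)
The plan is to bound the number of such $A$ by showing that each one is cheaply encodable: a small amount of ``seed'' data plus a sparse correction, with the sparsity controlled by the joint near-invariance. The essential quantitative idea is that the tolerance $\omega$ must scale like $1/|F|$. Writing $n=|F|$ and fixing a small $\epsilon>0$ to be chosen, I would set $\omega(n)=\epsilon/n$, so that the hypothesis $\max_{s\in F}|A\Delta\sigma_s(A)|\le\omega(n)d$ forces the \emph{total} boundary $\sum_{s\in F}|A\Delta\sigma_s(A)|\le\epsilon d$. Using the elementary identity $|A\Delta\sigma_s(A)|=|\{j:\mathbf{1}_A(\sigma_s(j))\neq\mathbf{1}_A(j)\}|$ (obtained from $\mathbf{1}_{\sigma_s(A)}(k)=\mathbf{1}_A(\sigma_s^{-1}(k))$ and a change of variable), this says precisely that $\mathbf{1}_A$ changes value across very few of the ``orbit edges'' $\{j,\sigma_s(j)\}$.

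Concretely, I would form the orbit graph on the vertex set $\{1,\dots,d\}$ with an edge joining $j$ and $\sigma_s(j)$ for each $s\in F$ and each $j$ with $\sigma_s(j)\neq j$. Let $B_0=\bigcup_{s\neq t}\{k:\sigma_s(k)=\sigma_t(k)\}$, so $|B_0|\le\delta d$ by hypothesis. For any good vertex $k\notin B_0$ the images $\sigma_s(k)$, $s\in F$, are $n$ distinct vertices all lying in the connected component of $k$; hence every component meeting the good set has at least $n$ vertices. Consequently the number of components is at most $d/n+\delta d$: at most $d/n$ of them contain a good vertex (being disjoint sets of size $\ge n$ inside $\{1,\dots,d\}$), plus at most $|B_0|$ components consisting entirely of bad vertices. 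Fix, once and for all and depending only on $\sigma$, a spanning forest $\cT$ of the orbit graph (one spanning tree per component), and let $R\le d/n+\delta d$ be its number of roots.

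The encoding then records, for a valid $A$, (a) the values of $\mathbf{1}_A$ at the $R$ roots, and (b) the set $D\subseteq\cT$ of forest edges across which $\mathbf{1}_A$ changes value. From this one recovers $\mathbf{1}_A$ everywhere by propagating outward from the roots along each tree, flipping the bit exactly across the edges of $D$; thus $A\mapsto(\text{root values},D)$ is injective. Each bichromatic forest edge $\{u,\sigma_s(u)\}$ injects into $\{(j,s):\mathbf{1}_A(\sigma_s(j))\neq\mathbf{1}_A(j)\}$ via $\{u,\sigma_s(u)\}\mapsto(u,s)$, so $|D|\le\sum_{s\in F}|A\Delta\sigma_s(A)|\le\epsilon d$. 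Hence the number of valid $A$ is at most $2^R\sum_{j\le\epsilon d}\binom{d}{j}\le\exp(R\log 2)\exp(H(\epsilon)d)$, where $H$ denotes the binary entropy function and $\epsilon\le 1/2$ is assumed.

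It then remains only to choose parameters. I would first pick $\epsilon\in(0,1/2]$ with $H(\epsilon)\le\kappa/2$ (possible since $H(\epsilon)\to 0$ as $\epsilon\to 0$) and set $\omega(n)=\epsilon/n$, which lies in $(0,1)$ for all $n$; then choose $M\in\Nb$ and $\delta_0>0$ so that $(1/M+\delta_0)\log 2\le\kappa/2$. For $|F|\ge M$ and $\delta\in(0,\delta_0)$ this yields $2^R\le\exp((\kappa/2)d)$, and multiplying the two estimates gives the desired bound $\exp(\kappa d)$. The crux of the whole argument — and the main obstacle — is arranging that the seed cost $2^R$ and the correction cost $\sum_{j\le\epsilon d}\binom{d}{j}$ are \emph{simultaneously} sub-$\exp(\kappa d)$. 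This is exactly what forces $\omega$ to depend on $|F|$, so that the total boundary stays a fixed small fraction of $d$ no matter how large $n$ is, and what forces $|F|$ to be large, so that the near-freeness guarantees components of size at least $n$ and hence only $O(d/n)$ roots.
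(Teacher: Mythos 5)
Your proof is correct, and it follows the same skeleton as the paper's: decompose $\{1,\dots,d\}$ into the orbits of $\langle\sigma(F)\rangle$, observe that orbits meeting the ``free'' part have size at least $|F|$ (so there are at most $d/|F|$ of them, plus at most $\delta d$ orbits made entirely of bad points), and encode $A$ by one seed bit per orbit together with the sparse set of generator-edges across which membership changes. Where you genuinely diverge is in how that sparse change-data is counted. The paper records, for each of the $2|F|$ maps $\sigma_s^{\pm 1}$ separately, the set of points where membership flips; each such set has size at most $\omega(|F|)d$, and the union bound costs $\big(\sum_{k\le \omega(|F|)d}\binom{d}{k}\big)^{2|F|}\approx\exp(\beta|F|d)$, so the paper must take $\omega(|F|)$ small enough (implicitly, decaying roughly like $1/(|F|\log|F|)$ or faster) to kill the factor of $|F|$ in the exponent. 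You instead fix a spanning forest of the orbit graph and record only the bichromatic \emph{forest} edges as a single set $D$ with $|D|\le\sum_{s\in F}|A\Delta\sigma_s(A)|\le\epsilon d$, paying a single entropy term $\exp(H(\epsilon)d)$ with $\epsilon$ independent of $|F|$. This buys you the explicit and slower-decaying tolerance $\omega(n)=\epsilon/n$, which is quantitatively sharper than what the paper's argument yields (though the paper only needs existence of some $\omega$, so nothing is lost there). The one cosmetic slip is that you call $H$ the binary entropy while writing the bound as $\exp(H(\epsilon)d)$; either use natural-log entropy or write $2^{H(\epsilon)d}$, but since all that matters is $H(\epsilon)\to 0$ as $\epsilon\to 0$, the argument is unaffected.
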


\begin{proof}
Partition $\{ 1,\dots ,d\}$ into sets $Q_1 , \dots ,Q_n$ each of which is invariant under the
subgroup $\langle \sigma (F) \rangle$ of $\Sym (d)$ generated by $\sigma (F)$ and has no nonempty
proper subset with this property. Write $I$ for the set of all $i\in \{ 1,\dots ,n \}$ such that $| Q_i |\geq |F|$
and set $I' =\{ 1,\dots ,n \}\setminus I$. Then $|I|\le d/|F|$.
For each $i\in I$ fix an element $a_i$ of $Q_i$.

Set $R=\bigcup_{i\in I} Q_i$ and $R'=\bigcup_{i\in I'}Q_i$.
For every $a\in R'$ we can find distinct $s , t \in F$ such that
$\sigma_s (a) = \sigma_t (a)$.
Since $\big| \bigcup_{s, t\in F, s\neq t} \{ k\in \{ 1,\dots ,d \} : \sigma_s (k) = \sigma_t (k) \} \big| \leq \delta d$,
it follows that $|R'| \le \delta d$.

Now let us estimate the number of sets $A\subseteq \{ 1,\dots ,d\}$ such that
$\max_{s\in F}| A\Delta \sigma_s (A)| \le \eta d$. Let $A$ be an arbitrary such set.
For each $s\in F$
define the function $\gamma_s : R \to \{ 0,1 \}$ by $\gamma_s (a) = 1$ if either (i)
$a\in A$ and $\sigma_s (a)\notin A$ or (ii) $a\notin A$ and $\sigma_s (a)\in A$, and $\gamma_s (a) = 0$ otherwise.
For each $s\in F$ define the function $\tilde{\gamma}_s : R \to \{ 0,1 \}$ by $\tilde{\gamma}_s (a) = 1$ if either (i)
$a\in A$ and $\sigma_s^{-1} (a)\notin A$ or (ii) $a\notin A$ and $\sigma_s^{-1} (a)\in A$,
and $\tilde{\gamma}_s (a) = 0$ otherwise. Also define a function $\beta: I\rightarrow \{0, 1\}$ by $\beta(i)=1$ if $a_i\in A$, and
$\beta(i)=0$ otherwise.

Let $s\in F$. Since $| A\Delta \sigma_s (A)| \le \eta d$, the number of $a\in R$ such that
$\gamma_s (a) = 1$ can be at most $\eta d$. Similarly, the number of $a\in R$ such that
$\tilde{\gamma}_s (a) = 1$ can be at most $\eta d$.

Note that the collection of functions $\{\gamma_s: s\in F\}\cup \{\tilde{\gamma}_s: s\in F\}\cup \{\beta\}$
uniquely specifies $A\cap R$, for if $i\in I$ and $a\in Q_i$ then for some $t_1 , \dots , t_k \in F$
and $e_1 , \dots ,e_k \in \{ 0,-1 \}$ the permutation $\omega = \sigma_{t_1}^{e_1} \cdots \sigma_{t_k}^{e_k}$
will send $a_i $ to $a$, which enables us to determine whether or not $a$ belongs to $A$ by using
the functions from $\{\gamma_s: s\in F\}\cup \{\tilde{\gamma}_s: s\in F\}\cup \{\beta\}$.
Thus the number of possibilities for $A\cap R$ is at most the number of possible collections
$\{\gamma_s: s\in F\}\cup \{\tilde{\gamma}_s: s\in F\}\cup \{\beta\}$ and hence is bounded above by
$\big(\sum_{k=0}^{\lfloor \eta d \rfloor} \binom{d}{k} \big)^{2|F|}2^{d/|F|}$.
By Stirling's approximation this
is bounded above by $\exp{(\beta |F|d)}2^{d/|F|}$ for some $\beta > 0$ not depending on $d$ or $|F|$
with $\beta\to 0$ as $\eta \to 0$.

For the number of possibilities for the intersection $A\cap R'$ we have the crude upper bound of
$2^{|R' |}$, which by the second paragraph is at most $2^{\delta d}$.
We deduce that the number of possibilities for $A$
is at most $\exp{(\beta |F|d)} 2^{d/|F| + \delta d}$, yielding the lemma.
\end{proof}


\begin{lemma}\label{L-supremum upper bound}
Let $G$ be an amenable countably infinite discrete group acting continuously on a compact metrizable space $X$
and $\mu$ a $G$-invariant Borel probability measure on $X$.
Let $\rho$ be a compatible metric on $X$.
Let $\varepsilon>0$.
Let $\cQ$ be a finite Borel partition of $X$ with $\max_{Q\in \cQ} \diam_\rho(Q)<\varepsilon/16$.
Then
$h^{\varepsilon}_{\Sigma, \mu ,\infty}(\rho)\le h_{\mu}(\cQ)$.
\end{lemma}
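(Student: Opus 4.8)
The plan is to run the same scheme as in the proof of Lemma~\ref{L-ineq1}, replacing the topological count of partial orbits by a measure-theoretic one governed by $h_\mu(\cQ)$. Fix $\kappa>0$; by the definition of $h^{\varepsilon}_{\Sigma,\mu,\infty}(\rho)$ as an infimum over $F$, $L$ and $\delta$, it suffices to produce a nonempty finite $F\subseteq G$, a finite $L\subseteq C(X)$ and a $\delta>0$ such that $\frac1d\log N_\varepsilon(\Map_\mu(\rho,F,L,\delta,\sigma),\rho_\infty)\le h_\mu(\cQ)+c\kappa$ for every good enough sofic approximation $\sigma\colon G\to\Sym(d)$, where $c$ is an absolute constant. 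As in Lemma~\ref{L-ineq1} I would take a small $\eta>0$ with $2\eta\log|\cQ|\le\kappa$, pass from a maximal $(\rho_\infty,\varepsilon)$-separated set $D$ to a subset $W$ with $|W|\ge\exp(-\kappa d)|D|$ on which the pointwise good set (where $\rho(\varphi(sc),s\varphi(c))$ is small for $s\in F$) is a constant set $\Theta$ with $|\Theta|>(1-\eta)d$, and then apply Lemma~\ref{L-Rokhlin2} to $\Theta$ (as $V$, with $\tau=\eta$) to obtain F{\o}lner sets $F_1,\dots,F_\ell$ and centres $C_1,\dots,C_\ell\subseteq\Theta$ with $(s,c)\mapsto\sigma_s(c)$ bijective and the tiles $\sigma(F_1)C_1,\dots,\sigma(F_\ell)C_\ell$ disjoint and $(1-2\eta)$-covering. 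By choosing the invariance parameter in Lemma~\ref{L-Rokhlin2} small enough I may also assume each $F_k$ is left invariant enough that $H_\mu(\cQ^{F_k})\le(h_\mu(\cQ)+\kappa)|F_k|$ and that the Shannon--McMillan and mean ergodic approximations below hold to within $\kappa$.

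The new counting ingredient rests on the hypothesis $\max_{Q\in\cQ}\diam_\rho(Q)<\varepsilon/16$. For $\varphi\in W$ consider its $\cQ$-itinerary $a\mapsto Q(\varphi(a))$, where $Q(x)$ is the atom containing $x$. Two elements of $W$ with the same itinerary lie everywhere in a common atom, hence within $\rho_\infty$-distance $\varepsilon/16<\varepsilon$, so they coincide; thus the itinerary map is injective on the $\varepsilon$-separated set $W$. Since the tiles together with the leftover set $H=\{1,\dots,d\}\setminus\bigcup_k\sigma(F_k)C_k$ (with $|H|\le 2\eta d$) exhaust $\{1,\dots,d\}$, an itinerary is determined by the tile names $\widetilde B_{k,c}=(Q(\varphi(sc)))_{s\in F_k}$ and the restriction to $H$, the latter contributing a factor at most $|\cQ|^{|H|}\le\exp(\kappa d)$. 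Using approximate equivariance on $\Theta$ I interpret $\widetilde B_{k,c}$ as the atom of $\cQ^{F_k}$ containing $\varphi(c)$, and call a configuration \emph{good} if $\sum_{(k,c)}\bigl(-\log\mu(\widetilde B_{k,c})\bigr)\le(h_\mu(\cQ)+\kappa)d$. Under the product measure $\bigotimes_{(k,c)}\mu$ on $\prod_{(k,c)}\cQ^{F_k}$ distinct configurations are disjoint events, and each good one has mass at least $\exp(-(h_\mu(\cQ)+\kappa)d)$, so there are at most $\exp((h_\mu(\cQ)+\kappa)d)$ good configurations.

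The heart of the matter, and the \emph{main obstacle}, is to show that every configuration coming from some $\varphi\in\Map_\mu(\rho,F,L,\delta,\sigma)$ is good, that is, $\sum_{(k,c)}I_\mu(\cQ^{F_k})(\varphi(c))\le(h_\mu(\cQ)+\kappa)d$. The key structural fact is that the entropy density $g(x)=h_{\mu_x}(\cQ)$ is $G$-invariant, i.e.\ $\cB_{X,G}$-measurable, by property~(i) of the ergodic decomposition, so $g(\varphi(c))=g(s\varphi(c))$ identically. I would combine three approximations: Shannon--McMillan (Lemma~\ref{L-SM}) to replace $I_\mu(\cQ^{F_k})$ by $|F_k|g$ in $L^1(\mu)$; the mean ergodic theorem along the $F_k$ to identify the invariant $g$ with the Birkhoff averages $\frac1{|F_k|}\sum_{s\in F_k}\bar g(s\,\cdot\,)$ of a continuous $\bar g\ge 0$ chosen with $\|g-\bar g\|_{L^1(\mu)}$ small and placed in $L$; and approximate equivariance on $\Theta$ to pass from $\bar g(s\varphi(c))$ to $\bar g(\varphi(sc))$. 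The telescoping $\sum_{(k,c)}|F_k|g(\varphi(c))=\sum_{(k,c)}\sum_{s\in F_k}g(s\varphi(c))\approx\sum_{(k,c)}\sum_{s\in F_k}\bar g(\varphi(sc))=\sum_{a\in\bigcup_k\sigma(F_k)C_k}\bar g(\varphi(a))$ then converts a sum over the \emph{centres} into a sum over \emph{all} tiled coordinates, which the constraint $|(\varphi_*\zeta)(\bar g)-\mu(\bar g)|<\delta$ pins to $d\int\bar g\,d\mu\approx d\,h_\mu(\cQ)$ up to an $O(\eta)d$ error.

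The delicate point is that the $L^1$-errors above are controlled only in $\mu$-average, so the sums over the centres $\bigsqcup_k C_k\subseteq\{1,\dots,d\}$ must be estimated by first discarding centres $c$ with $\varphi(c)$ in a small open set $Z$ (where the Shannon--McMillan or ergodic averages have not converged, or where $\varphi(c)$ lies near some boundary $\partial Q$ so that $\widetilde B_{k,c}$ fails to record the true atom of $\varphi(c)$); the number of such centres is at most $|\{a:\varphi(a)\in Z\}|\approx d\,\mu(Z)$ by applying the measure constraint to a continuous majorant of $\mathbf 1_Z$ placed in $L$. Since each discarded tile contributes at most $\max_k|F_k|\cdot\log|\cQ|$ to the information sum, choosing $\mu(Z)$ small enough \emph{before} fixing $L$ and $\delta$ keeps their total contribution below $\kappa d$. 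Assembling the pieces gives
\[
N_\varepsilon(\Map_\mu(\rho,F,L,\delta,\sigma),\rho_\infty)=|D|\le\exp(\kappa d)\,|W|\le\exp(\kappa d)\,|\cQ|^{|H|}\exp((h_\mu(\cQ)+\kappa)d)\le\exp((h_\mu(\cQ)+c\kappa)d),
\]
and letting $\kappa\to0$ yields $h^{\varepsilon}_{\Sigma,\mu,\infty}(\rho)\le h_\mu(\cQ)$.
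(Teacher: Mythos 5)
Your overall architecture (the Stirling reduction to a subset $W$ of the $\varepsilon$-separated set on which the good set $\Theta$ is constant, a Rokhlin quasitiling of $\Theta$, a per-tile count times a $|\cQ|^{|H|}$ factor for the leftover set) matches the paper's, but your central counting mechanism --- reading off $\cQ$-itineraries and bounding the number of ``good'' configurations via the information function --- is genuinely different, and it is exactly where the argument breaks. The step you flag as the main obstacle, namely that every configuration arising from some $\varphi\in\Map_\mu(\rho,F,L,\delta,\sigma)$ satisfies $\sum_{(k,c)}I_\mu(\cQ^{F_k})(\varphi(c))\le(h_\mu(\cQ)+\kappa)d$, cannot be extracted from the data defining $\Map_\mu$. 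The only tools available are approximate equivariance and $|(\varphi_*\zeta)(f)-\mu(f)|<\delta$ for finitely many continuous $f$, and the most these yield is a bound of the form $|\varphi^{-1}(Z)|\lesssim\mu(\overline{Z})\,d$ for the bad sets $Z$ where the Shannon--McMillan or mean ergodic approximations fail. But the tile centres number at most $\sum_k|C_k|\le d/\min_k|F_k|$, a vanishing fraction of the coordinates, so nothing prevents \emph{all} of them from landing in a bad set of measure $1/\min_k|F_k|$; and since Lemma~\ref{L-SM} is an $L^1$-convergence statement with no rate, you cannot arrange $\mu(Z)\ll 1/\max_k|F_k|$ without a circular choice of parameters ($Z$ shrinks only as the $F_k$ become more invariant, which forces them to grow). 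If all centres are atypical, each tile contributes up to $|F_k|\log|\cQ|$ to the information sum and your bound collapses to $|\cQ|^d$. The same defect afflicts the telescoping step (all centres may sit where $g$ differs from the Birkhoff average of $\bar g$) and, separately, the identification of $(Q(\varphi(sc)))_{s\in F_k}$ with the $\cQ^{F_k}$-atom of $\varphi(c)$, which for a general Borel partition fails on a set of positive measure no matter how small $\delta$ is, since no null-boundary hypothesis is assumed in the statement.

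The paper circumvents all of this by never asking whether $\varphi(c)$ is itself a typical point. It fixes in advance, for each piece $R$ of a finite invariant partition on which $x\mapsto h_{\mu_x}(\cQ)$ is nearly constant, a closed set $Z_R$ of Shannon--McMillan-typical points of large $\mu_R$-measure together with an open neighbourhood $B_R$ that is still $(\varepsilon/8)$-spanned with respect to $\rho_{F_k}$ by one representative per typical atom (this is where the hypothesis $\max_{Q\in\cQ}\diam_\rho(Q)<\varepsilon/16$ enters); Urysohn functions supported in these neighbourhoods are placed in $L$, which forces a $(\mu(R)-\eta)$-fraction of the coordinates --- hence enough tile centres --- into $B_R$. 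The per-tile count is then of $(\rho_{F_k},\varepsilon/4)$-separated positions of $\varphi(c)$ inside $B_R$, bounded by $N_{\varepsilon/4}(B_R,\rho_{F_k})\le\exp((\xi_R+\kappa)|F_k|)$: closeness of $\varphi(c)$ to the typical set suffices, whereas your atom count would require typicality of $\varphi(c)$ itself, which closeness does not supply (an atom of tiny measure can sit arbitrarily near $Z_R$). The invariant partition $\cR$, rather than your mean-ergodic telescoping, is what handles non-ergodicity. To repair your proof you would essentially have to replace the itinerary count by this spanning-set count, at which point you recover the paper's argument.
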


\begin{proof}
Let $\kappa>0$. Take a finite $G$-invariant Borel partition $\cR'$ of $X$ such that
$\sup_{x\in R} h_{\mu_x}(\cQ)-\inf_{x\in R} h_{\mu_x}(\cQ)<\kappa$ for
every $R\in \cR'$, where $x\mapsto \mu_x$ is the Borel map from $X$ to $\cM^\erg (X,G)$ described at the beginning of the section.
Denote by $\cR$ the set of atoms in $\cR'$ with positive $\mu$-measure.
For each $R\in \cR$, set $\xi_R=\sup_{x\in R} h_{\mu_x}(\cQ)$. We will show that
$h^{\varepsilon}_{\Sigma, \mu ,\infty}(\rho)\le \sum_{R\in \cR}\xi_R\mu(R)+5\kappa$.
Since
\begin{align*}
h_{\mu}(\cQ)&=\int_X h_{\mu_x}(\cQ)\, d\mu(x)\ge \sum_{R\in \cR}\mu(R)\hspace*{0.3mm}\inf_{x\in R} h_{\mu_x}(\cQ) \\
&\ge \sum_{R\in \cR}\mu(R)(\xi_R-\kappa)=\sum_{R\in \cR}\xi_R\mu(R)-\kappa,
\end{align*}
this will imply $h^{\varepsilon}_{\Sigma, \mu,\infty}(\rho)\le h_{\mu}(\cQ)+6\kappa$. As $\kappa$
is an arbitrary positive number, the latter  will imply that
$h^{\varepsilon}_{\Sigma, \mu ,\infty}(\rho)\le  h_{\mu}(\cQ)$.

By Lemma~\ref{L-fixed}, there exist an $M'\in \Nb$ and  a function $\omega: \Nb\rightarrow (0, 1)$ such
that, for any finite subset $F'$ of $G$ with $|F'|\ge M'$,
whenever a map $\sigma : G\to\Sym(d)$ for some $d\in \Nb$
is a good enough sofic approximation for $G$ the number of sets
$A\subseteq \{1, \dots, d\}$ satisfying $\max_{s\in F'}|A\Delta \sigma_s(A)|/d\le \omega(|F'|)$ is at most
$\exp(|\cR|^{-1}\kappa d)$.


Take an $\eta>0$ such that $(N_{\varepsilon/4}(X, \rho))^{3\eta |\cR|}<\exp(\kappa)$, $\eta<2^{-1}\min_{R\in \cR}\mu(R)$,
\begin{align*}
\frac{1}{1-\eta}\bigg(\sum_{R\in \cR}\xi_R \mu(R)+\kappa+ 2\kappa |\cR|^2\eta +2|\cR|\eta\sum_{R\in \cR}\xi_R \bigg)<
\sum_{R\in \cR}\xi_R \mu(R)+2\kappa,
\end{align*}
and for every $R\in \cR$ and finite set $\Upsilon$ the number of sets $\Upsilon'\subseteq \Upsilon$
satisfying $|\Upsilon'|\ge |\Upsilon|(\mu(R)-\eta)/(\mu(R)+\eta)$ is at most $\exp(\kappa |\Upsilon|)$,
as is possible by Stirling's approximation.

By Lemma~\ref{L-Rokhlin},
there are an $\ell\in \Nb$ and an $\eta'>0$
such that, whenever $e\in F_1\subseteq F_2\subseteq \cdots \subseteq F_\ell$ are finite subsets of $G$ with
$|(F_{k-1}^{-1}F_k)/F_k|\le \eta'|F_k|$
for $k=2, \dots, \ell$, for every map $\sigma : G\to\Sym(d)$ for some $d\in \Nb$ which is a good
enough sofic approximation for $G$ and every $Y_R\subseteq \{1, \dots, d\}$ with $|Y_R|/d\ge \mu(R)-\eta$ for all $R\in \cR$,
there exist, for every $R\in \cR$, sets $C_{R, 1}, \dots, C_{R, \ell}\subseteq Y_R$ such that
\begin{enumerate}
\item for all $k=1, \dots, \ell$ and $c\in C_{R, k}$
the map $s\mapsto \sigma_s(c)$ from $F_k$ to $\sigma(F_k)c$ is bijective,

\item the family $\bigcup_{k=1}^\ell\{\sigma(F_k)c: c\in C_{R, k}\}$ is $\eta$-disjoint
and $(\mu(R)-2\eta)$-covers $\{1, \dots, d\}$.
\end{enumerate}

Take $0<\tau<\eta/4$.
Let $R\in \cR$. Note that $\mu(\cdot \cap R)/\mu(R)$ is a Borel probability measure on $X$, which we denote by $\mu_R$.
One has $ h_{\mu_x}(\cQ)\le \xi_R$ for $\mu_R$-almost every $x$.
By Lemma~\ref{L-SM}, there exist a nonempty finite subset $K_R$ of $G$ and a $\delta_R>0$
such that for every nonempty finite subset $F'$ of $G$ satisfying $|K_RF'\setminus F'|<\delta_R |F'|$ there exists
an $\cA_{R, F'}\subseteq \cQ^{F'}$ such that $\mu_R(\bigcup \cA_{R, F'})>1-\tau/\ell$,
and for every $A\in \cA_{R, F'}$  we have $\mu_R(A)>0$ and $-|F'|^{-1}\log \mu_R(A)\le \xi_R+\kappa$, that is
\begin{align} \label{E-measure upper}
\mu_R(A)\ge \exp(-(\xi_R+\kappa)|F'|).
\end{align}
For each $A\in\cA_{R, F'}$ pick a point $x_A \in A\cap R$ and set $E_{R, F'} = \{ x_A : A\in \cA_{R, F'} \}$. Since $\max_{Q\in \cQ} \diam_\rho(Q)<\varepsilon/16$, the set
$E_{R, F'}$ is an $(\varepsilon/16)$-spanning subset of $\bigcup\cA_{R, F'}$ with respect to $\rho_{F'}$.

Now we fix finite subsets $F_1, \dots, F_\ell$ of $G$ such that
$e\in F_1\subseteq F_2\subseteq \cdots \subseteq F_\ell$,  $|F_\ell|\ge M'$, $|(F_{k-1}^{-1}F_k)/F_k|\le \eta'|F_k|$
for $k=2, \dots, \ell$, and $|K_RF_k\setminus F_k|<\delta_R|F_k|$ for every $R\in \cR$ and $k=1, \dots, \ell$.
Then we have $\cA_{R, F_k}$ and $E_{R, F_k}$ for every $R\in \cR$ and $k=1, \dots, \ell$.

Let $\lambda>0$ be a small number to be determined in a moment.
%
Let $R\in \cR$. Then $\mu_R \big(\bigcap_{k=1}^\ell\bigcup \cA_{R, F_k}\big)>1-\tau$.
By the regularity of $\mu_R$ \cite[Thm.\ 6.1]{Walters}, we can find a closed subset $Z_R$ of
$R\cap \bigcap_{k=1}^\ell \bigcup \cA_{R, F_k}$ such that $\mu_R(Z_R)>1-\tau-\lambda$
and a closed subset $Z'_R$ of $R$ such that $Z_R\subseteq Z'_R$ and $\mu_R(Z'_R)>1-\lambda$.
Then $F_\ell Z'_R$ is a closed subset of the $G$-invariant set $R$.

Since the closed sets $F_\ell Z'_R$ for $R\in \cR$ are pairwise disjoint, we can find an open neighborhood
$U_R$ of $F_\ell Z'_R$ for every $R\in \cR$ such that the sets $U_R$ for $R\in \cR$ are pairwise disjoint.

Let $R\in \cR$.
By the continuity of the action of $G$ on $X$,
we can find open neighborhoods $B_R$ and $B'_R$ of $Z_R$ and $Z'_R$ respectively, such that
$B_R\subseteq B'_R$, $ F_\ell B'_R\subseteq U_R$ and $E_{R, F_k}$ is a $(\rho_{F_k}, \varepsilon/8)$-spanning subset
of $B_R\cup \bigcup \cA_{R, F_k}$ for every $k=1, \dots, \ell$.
For each $k=1, \dots, \ell$ we have
\begin{align} \label{E-measure upper2}
N_{\varepsilon/4}(B_R, \rho_{F_k} )&\le N_{\varepsilon/4}\Big(B_R\cup \bigcup \cA_{R, F_k}, \rho_{F_k} \Big)
\le |E_{R, F_k}| = | \cA_{R, F_k} | \\
\nonumber &\leq \mu_R \Big(\bigcup\cA_{R, F_k}\Big) /\exp(-(\xi_R+\kappa )|F_k |)\\
\nonumber &\leq \exp((\xi_R+\kappa )|F_k |).
\end{align}
Take an $h_R\in C(X)$ such that $0\le h_R\le 1$, $h_R=1$ on $Z'_R$, and $h_R=0$ outside of $B'_R$.
Also, take a $g_R\in C(X)$ such that $0\le g_R\le 1$, $g_R=1$ on $Z_R$, and $g_R=0$ outside of $B_R$.
Replacing $g_R$ by $\min(g_R, h_R)$ if necessary, we may assume that $g_R\le h_R$.

Set $L=\bigcup_{R\in\cR} \{ h_R , g_R \}$.
Let $\delta>0$ be a small number which we will determine in a moment.
Let $\sigma$ be a map from $G$ to $\Sym(d)$ for some $d\in \Nb$ which is a good enough
sofic approximation for $G$. We will show that
$N_{\varepsilon}(\Map_\mu(\rho, F_\ell, L, \delta, \sigma), \rho_\infty)\le  \exp((\sum_{R\in \cR}\xi_R\mu(R)+5\kappa)d)$, which
will complete the proof since we can then conclude that
$h_{\Sigma, \mu ,\infty}^\varepsilon(\rho, F_\ell, L, \delta)\le \sum_{R\in \cR}\xi_R\mu(R)+5\kappa$
and hence $h^{\varepsilon}_{\Sigma, \mu ,\infty}(\rho)\le \sum_{R\in \cR}\xi_R\mu(R)+5\kappa$.

Denote by $\Lambda$ the set of all $a\in \{1, \dots, d\}$ satisfying
$\sigma_e(a)=a$.
Let $\varphi\in \Map_\mu(\rho, F_\ell, L, \delta, \sigma)$.
Denote by $\Lambda_{\varphi}$ the set of all $a\in\{ 1,\dots ,d \}$ such that
\begin{enumerate}
\item $|h_R(\varphi(a))- h_R(s^{-1}\varphi(s a))| <1/2$
for all $R\in\cR$ and $s\in F_\ell$, and

\item $\rho(\varphi(s a), s\varphi(a)) < \sqrt{\delta}$ for all $s\in F_\ell$.
\end{enumerate}
Set
\begin{align*}
\Omega'_{R, \varphi} &=\{a\in \{1, \dots, d\}: h_R(\varphi(a))>0\}, \\
\Omega''_{R, \varphi} &=\{a\in \{1, \dots, d\}: h_R(\varphi(a))>1/2\}, \\
\Omega_{R, \varphi}&=\Omega''_{R, \varphi}\cap \Lambda_{\varphi}\cap \Lambda,
\end{align*}
and
\begin{align*}
\Theta'_{R, \varphi} &=\{a\in \{1, \dots, d\}: g_R(\varphi(a))>0\}, \\
\Theta''_{R, \varphi} &=\{a\in \{1, \dots, d\}: g_R(\varphi(a))>1/2\}, \\
\Theta_{R, \varphi}&=\Theta''_{R, \varphi}\cap \Lambda_{\varphi}\cap \Lambda.
\end{align*}

{\bf Claim I:} Assuming $\lambda, \delta$ are small enough
and $\sigma$ is a good enough sofic approximation for $G$,
for every
$\varphi\in \Map_\mu(\rho, F_\ell, L, \delta, \sigma)$ we have that
$|\Omega_{R, \varphi}|/d\le \mu(R)+\eta$ for every $R\in \cR$, the sets $\sigma(F_\ell)\Omega_{R, \varphi}$ for $R\in \cR$ are pairwise disjoint, and
\[
\frac{1}{d}\max_{s\in F_{\ell}}\bigg|\Omega_{R, \varphi}\Delta \sigma_s (\Omega_{R, \varphi})\bigg| \le \omega(|F_\ell|).
\]

To verify Claim I,
note first that if $\sigma$ is a good enough sofic approximation for $G$ then $|\Lambda|/d\ge 1-\lambda$.
Consider the continuous pseudometric $\rho'$ on $X$ defined by
\[
\rho'(x, y)=\max_{s\in F_\ell}\max_{R\in \cR}|h_R(s^{-1}x)-h_R(s^{-1}y)|.
\]
When $\delta$ is small enough, for any $x, y\in X$ with $\rho(x, y)< \sqrt{\delta}$, one has $\rho'(x, y)< 1/2$.
It follows that for any $a\in \{1, \dots, d\}$  and $s\in F_\ell$ with $\rho(\varphi(sa), s\varphi(a))<\sqrt{\delta}$, one
has $|h_R(\varphi(a))- h_R(s^{-1}\varphi(s a))|< 1/2$
for all $R\in\cR$.
Since $\varphi \in \Map_\mu(\rho, F_\ell, L, \delta, \sigma)$, for each $s\in F_\ell$ one has $\rho_2(\alpha_s\circ \varphi, \varphi\circ \sigma_s)<\delta$ and hence
\[
\big|\big\{a\in \{1, \dots, d\}: \rho(\varphi(sa), s\varphi(a))<\sqrt{\delta}\big\}\big|\ge (1-\delta) d.
\]
Therefore $|\Lambda_{\varphi}|/d\ge 1-|F_\ell|\delta$.

Now let $R\in \cR$, $a\in \Omega_{R, \varphi}$ and $s\in F_\ell$.
One has
$$ h_R(s^{-1}\varphi(sa))\ge h_R(\varphi(a))-|h_R(\varphi(a))-h_R(s^{-1}\varphi(sa))|>1/2-1/2=0.$$
Therefore $s^{-1}\varphi(s a)\in B'_R$, and hence $\varphi(s a)\in F_{\ell}B'_R\subseteq U_R$. Since the sets $U_R$ for $R\in \cR$ are pairwise
disjoint, the sets $\sigma(F_\ell)\Omega_{R, \varphi}$ for $R\in \cR$ are pairwise disjoint.

Let $R\in \cR$.
We have
\begin{align} \label{E-Claim I.1}
(\varphi_*\zeta)(h_R)\ge \mu(h_R)-\delta\ge \mu(Z'_R)-\delta\ge \mu(R)(1-\lambda)-\delta.
\end{align}
Since $h_R\le 1$,
we have
\begin{align*}
\frac{|\Omega'_{R, \varphi}|}{d}\ge (\varphi_*\zeta)(h_R)
\ge \mu(R)(1-\lambda)-\delta.
\end{align*}
Since $h_Rh_{R'}=0$ for all distinct $R, R'\in \cR$,
the sets $\{\Omega'_{R, \varphi}\}_{R\in \cR}$ are pairwise disjoint.
Therefore
\begin{align} \label{E-Claim I.2}
\frac{|\Omega'_{R, \varphi}|}{d} &\le 1-\sum_{R'\in \cR\setminus \{R\}}\frac{|\Omega'_{R', \varphi}|}{d}\\
\nonumber &\le  1-\sum_{R'\in \cR\setminus \{R\}}(\mu(R')(1-\lambda)-\delta)\\
\nonumber &\le \mu(R)(1-\lambda)+\lambda+|\cR|\delta,
\end{align}
and hence
\begin{align*}
\frac{|\Omega_{R, \varphi}|}{d}\le \frac{|\Omega'_{R, \varphi}|}{d} &\le  \mu(R)(1-\lambda)+\lambda+|\cR|\delta\\
&\le \mu(R)+\eta
\end{align*}
when $\lambda, \delta$ are small enough.
We have
\begin{align*}
(\varphi_* \zeta )(h_R)&\le \frac{|\Omega''_{R, \varphi}|}{d}+\frac{|\Omega'_{R,  \varphi}\setminus \Omega''_{R, \varphi}|}{2d}
=\frac{|\Omega'_{R, \varphi}|}{2d}+\frac{|\Omega''_{R,\varphi}|}{2d}\\
&\le \frac{\mu(R)(1-\lambda)+\lambda+|\cR|\delta}{2}+\frac{|\Omega''_{R, \varphi}|}{2d}.
\end{align*}
Thus using \eqref{E-Claim I.1} we get
\begin{align} \label{E-Claim I.3}
\frac{|\Omega''_{R, \varphi}|}{d}\ge \mu(R)(1-\lambda)-\lambda-(2+|\cR|)\delta,
\end{align}
and hence
\begin{align*}
\frac{|\Omega_{R, \varphi}|}{d}&\ge
\frac{|\Omega''_{R, \varphi}|}{d}-\bigg(1-\frac{|\Lambda_{\varphi}|}{d}\bigg)-\bigg(1-\frac{|\Lambda|}{d}\bigg)\\
&\ge \mu(R)(1-\lambda)-2\lambda-(2+|\cR|+|F_\ell|)\delta.
\end{align*}

Since the sets $\sigma(F_\ell)\Omega_{R, \varphi}$ for $R\in \cR$ are pairwise disjoint, for every $R\in \cR$ we have
\begin{align*}
\frac{|\sigma(F_\ell)\Omega_{R, \varphi}|}{d} - \frac{|\Omega_{R, \varphi}|}{d}
&\le 1-\sum_{R'\in \cR} \frac{|\Omega_{R', \varphi}|}{d}\\
&\le (1+2|\cR|)\lambda+|\cR|(2+|\cR|+|F_\ell|)\delta,
\end{align*}
and hence, using the fact that
$\sigma(F_{\ell})\Omega_{R, \varphi}\supseteq \sigma_e\Omega_{R, \varphi}=\Omega_{R, \varphi}$,
\begin{align*}
\max_{s\in F_{\ell}}\frac{|\Omega_{R, \varphi}\Delta \sigma_s\Omega_{R, \varphi}|}{d}
&\le
2\bigg(\frac{|\sigma(F_\ell)\Omega_{R, \varphi}|}{d}-\frac{| \Omega_{R, \varphi}|}{d}\bigg)\\
&\le 2(1+2|\cR|)\lambda+2|\cR|(2+|\cR|+|F_\ell|)\delta\\
&\le \omega(|F_\ell|),
\end{align*}
when $\lambda, \delta$ are small enough. This proves Claim I.

{\bf Claim II:}
Assuming $\lambda, \delta$ are small enough and $\sigma$ is a good enough sofic approximation for $G$,
for every $\varphi\in \Map_\mu(\rho, F_\ell, L, \delta, \sigma)$ and $R\in \cR$
we have $\Theta_{R, \varphi}\subseteq \Omega_{R, \varphi}$, $|\Theta_{R,\varphi}|/d\ge \mu(R)-\eta$,
and for every $a\in \Theta_{R,\varphi}$ one has $\varphi(a)\in B_R$.

To verify Claim II, first observe that, for every $R\in \cR$, since $h_R\ge g_R$ we have
$\Theta'_{R, \varphi}\subseteq \Omega'_{R, \varphi}$ and $\Theta_{R, \varphi}\subseteq \Omega_{R, \varphi}$.
Also, for $R\in \cR$ and $a\in \Theta_{R, \varphi}$,
 since $g_R(\varphi(a))>0$ we see that $\varphi(a)$ lies in $B_R$.

Now let $R\in \cR$. One has
\begin{align*}
(\varphi_*\zeta)(g_R)\ge \mu(g_R)-\delta\ge \mu(Z_R)-\delta\ge \mu(R)(1-\tau-\lambda)-\delta.
\end{align*}
We also have
\[
\frac{|\Theta'_{R, \varphi}|}{d}\le \frac{|\Omega'_{R, \varphi}|}{d}\overset{\eqref{E-Claim I.2}}\le \mu(R)(1-\lambda)+\lambda+|\cR|\delta.
\]
Similarly to \eqref{E-Claim I.3}, we get
\begin{align*}
\frac{|\Theta''_{R, \varphi}|}{d}\ge \mu(R)(1-2\tau-\lambda)-\lambda-(2+|\cR|)\delta,
\end{align*}
and hence, using $\tau<\eta/4$,
\begin{align*}
\frac{|\Theta_{R, \varphi}|}{d}&\ge \frac{|\Theta''_{R, \varphi}|}{d}-\bigg( 1-\frac{|\Lambda_{\varphi}|}{d}\bigg)-\bigg( 1-\frac{|\Lambda|}{d}\bigg)\\
&\ge \mu(R)(1-2\tau-\lambda)-2\lambda-(2+|\cR|+|F_\ell|)\delta \\
&\ge \mu(R)-\eta,
\end{align*}
when $\lambda, \delta$ are small enough and $\sigma$ is a good enough sofic approximation for $G$. This proves Claim II.

For each $J\subseteq \{1, \dots, d\}$ we
define on the set of maps from $\{1, \dots, d\}$ to $X$ the pseudometric
\[
\rho_{J,\infty} (\varphi , \psi ) = \rho_{\infty} (\varphi|_J,  \psi|_J).
\]

Take a $(\rho_\infty ,\varepsilon )$-separated subset $D$ of $\Map_\mu (\rho ,F_\ell , L, \delta , \sigma )$ of maximal cardinality.

By Claim I and the second paragraph of the proof of this lemma, when $\sigma$ is a good enough sofic approximation for $G$
there is a subset $D'$ of $D$ with $\exp(\kappa d)|D'|\ge |D|$ such that,
for each $R\in \cR$, the set $\Omega_{R, \varphi}$ is the same, say $\Omega_R$, for every $\varphi\in D'$.
By Claims I and II and the third paragraph of the proof, when $\sigma$ is a good enough sofic approximation for $G$, there is a subset $W$ of $D'$ with
$|W|\exp(\kappa d)\ge |W|\prod_{R\in \cR}\exp(\kappa|\Omega_R|)\ge |D'|$
such that, for each $R\in \cR$, the set $\Theta_{R, \varphi}$ is the same, say $\Theta_R$, for every $\varphi\in W$.

Now take $Y_R$ to be $\Theta_R$ in our application of Lemma~\ref{L-Rokhlin} in the fourth paragraph of the proof
in order to obtain sets $C_{R, 1}, \dots, C_{R, \ell} \subseteq \Theta_R$ satisfying the two conditions listed there.
Denote by $\sL_R$ the set of all pairs $(k,c)$ such that $k\in \{ 1,\dots ,\ell\}$ and $c\in C_{R, k}$. By $\eta$-disjointness,
for every $(k, c)\in \sL_R$ we can find an $F_{k, c}\subseteq F_k$ with
$|F_{k, c}|\ge (1-\eta)|F_k|$ such that the sets $\sigma(F_{k, c})c$ for $(k, c)\in \sL_R$ are pairwise disjoint.

Let $(k, c)\in \sL_R$. Take an $(\varepsilon/2)$-spanning subset $V_{k,c}$ of $W$ with respect to
$\rho_{\sigma(F_{k, c}) c,\infty}$ of minimal cardinality. We record here the relation between these sets:
$$ V_{k, c}\subseteq W\subseteq D'\subseteq D\subseteq \Map_\mu (\rho ,F_\ell , L, \delta , \sigma ).$$

{\bf Claim III:} Assuming $\delta$ is small enough, one has
\[
|V_{k, c}|\le \exp((\xi_R+\kappa)|F_k|).
\]

To verify Claim III,
let $V$ be an $(\varepsilon/2)$-separated subset of $W$ with respect to $\rho_{\sigma(F_{k, c})c ,\infty}$.
For each $\varphi\in V$,
since $c\in C_{R, k}\subseteq \Theta_R$
the point $\varphi(c)$ lies in $B_R$.
Let $\varphi$ and $\psi$ be distinct elements of $V$. Then for every  $s\in F_{k, c}$, since $c\in \Lambda_{\varphi}\cap \Lambda_{\psi}$ we have
\begin{align*}
\rho(s\varphi(c), s\psi(c))&\ge \rho(\varphi(sc), \psi(sc))-\rho(s\varphi(c), \varphi(sc))-\rho(s\psi(c), \psi(sc))\\
&\ge  \rho(\varphi(sc), \psi(sc))-2\sqrt{\delta},
\end{align*}
and hence
\begin{align*}
\rho_{F_{k, c}}(\varphi(c), \psi(c))&=\max_{s\in F_{k, c}}\rho(s\varphi(c), s\psi(c))\ge \max_{s\in F_{k, c}}\rho(\varphi(sc), \psi(sc))-2\sqrt{\delta}>\varepsilon/2-\varepsilon/4=\varepsilon/4,
\end{align*}
granted that $\delta$ is taken small enough.
Thus $\{\varphi(c) : \varphi\in V \}$ is a $(\rho_{F_{k, c}}, \varepsilon/4)$-separated subset of $B_R$ of cardinality $|V|$,
so that
\begin{align*}
|V|\le N_{\varepsilon/4}(B_R, \rho_{F_{k, c}})\le N_{\varepsilon/4}(B_R, \rho_{F_k})\overset{\eqref{E-measure upper2}}\le \exp((\xi_R+\kappa)|F_k|).
\end{align*}
Therefore
\[|V_{k, c}|
\le N_{\varepsilon/2}(W, \rho_{\sigma(F_{k, c})c ,\infty})\le \exp((\xi_R+\kappa)|F_k|).\]
This proves Claim III.

Set
\[
H = \{ 1,\dots ,d\} \setminus \bigcup_{R\in \cR}\bigcup \{ \sigma (F_{k, c})c : (k,c)\in \sL_R\},
\]
and take an
$(\varepsilon/2)$-spanning subset $V_H$ of $W$ with respect to $\rho_{H,\infty}$ of minimal cardinality.

{\bf Claim IV:}
\[
|V_H | \leq  (N_{\varepsilon/4}(X, \rho))^{3|\cR|\eta d} .
\]

To verify Claim IV, first note that
for each $R\in \cR$ we have
\[
\bigg|\bigcup \{ \sigma (F_{k, c})c : (k,c)\in \sL_R\}\bigg|
\ge (1-\eta)\bigg|\bigcup_{k=1}^\ell \sigma (F_k)C_{R, k}\bigg|\ge (1-\eta)(\mu(R)-2\eta)d.
\]
Since the sets $\bigcup_{k=1}^\ell \sigma (F_k)C_{R, k}$ for $R\in \cR$ are pairwise disjoint, we get
\[
\bigg|\bigcup_{R\in \cR}\bigcup \{ \sigma (F_{k, c})c : (k,c)\in \sL_R\}\bigg|\ge \sum_{R\in \cR}(1-\eta)(\mu(R)-2\eta)d= (1-\eta)(1-2|\cR|\eta)d.
\]
Therefore
\[
|H| \leq  (\eta+2(1-\eta)|\cR|\eta)d\le (1+2|\cR|)\eta d\le 3|\cR|\eta d,
\]
and thus
\[
|V_H | \leq (N_{\varepsilon/4}(X, \rho))^{|H|} \leq (N_{\varepsilon/4}(X, \rho))^{3|\cR|\eta d}.
\]
This proves Claim IV.

Write $U$ for the set of all maps
$\varphi : \{1, \dots, d\}\to X$ such that $\varphi|_H\in V_H|_H$ and
$\varphi|_{\sigma(F_{k, c})c}\in V_{k,c}|_{\sigma(F_{k, c})c}$
for all $R\in \cR$ and $(k,c)\in\sL_R$.

{\bf Claim V:}
\[
|U|\le \exp\bigg(\bigg(\sum_{R\in \cR}\xi_R \mu(R)+3\kappa\bigg) d\bigg).
\]

To verify Claim V, observe that,
since the sets $\bigcup_{k=1}^\ell \sigma(F_k)C_{R, k}$ for $R\in \cR$ are pairwise disjoint, for each $R\in \cR$ we have
\begin{align*}
\bigg|\bigcup_{k=1}^\ell \sigma(F_k)C_{R, k}\bigg|&\le d
-\sum_{R'\in \cR \setminus \{ R \}}\bigg|\bigcup_{k=1}^\ell \sigma(F_k)C_{R', k}\bigg|\\
&\le d-\sum_{R'\in \cR \setminus \{ R \}}(\mu(R')-2\eta)d\\
&\le \mu(R)d+2|\cR|\eta d.
\end{align*}
Therefore, by our choice of $\eta$ in the third paragraph of the proof,
\begin{align*}
|U | &=|V_H | \prod_{R\in \cR}\prod_{(k,c)\in\sL_R} |V_{k,c} |
\le (N_{\varepsilon/4}(X, \rho))^{3|\cR|\eta d} \exp\bigg(\sum_{R\in \cR}\sum_{(k, c)\in \sL_R}(\xi_R+\kappa)|F_k|\bigg)\\
&= (N_{\varepsilon/4}(X, \rho))^{3|\cR|\eta d} \exp\bigg(\sum_{R\in \cR}(\xi_R+\kappa)\sum_{k=1}^\ell|F_k|\cdot |C_{R, k}|\bigg)\\
&\le \exp(\kappa d) \exp\bigg(\frac{1}{1-\eta}\sum_{R\in \cR}(\xi_R+\kappa)\bigg|\bigcup_{k=1}^\ell \sigma(F_k)C_{R, k}\bigg|\bigg)\\
&\le \exp(\kappa d) \exp\bigg(\frac{1}{1-\eta}\sum_{R\in \cR}(\xi_R+\kappa)(\mu(R)d+2|\cR|\eta d)\bigg)\\
&\le \exp(\kappa d) \exp\bigg(\frac{1}{1-\eta}\bigg(\sum_{R\in \cR}\xi_R \mu(R)+\kappa+2\kappa |\cR|^2\eta +2|\cR|\eta\sum_{R\in \cR}\xi_R\bigg)
d\bigg)\\
&\le \exp(\kappa d)\exp\bigg(\bigg(\sum_{R\in \cR}\xi_R \mu(R)+2\kappa\bigg)d\bigg)
=\exp\bigg(\bigg(\sum_{R\in \cR}\xi_R \mu(R)+3\kappa)d\bigg).
\end{align*}
This proves Claim V.

Note that every element of $W$ lies within $\rho_{\infty}$-distance $\varepsilon/2$ to an element
of $U$, and since $W$ is $\varepsilon$-separated with respect to $\rho_{\infty}$ this means that
the cardinality of $W$ is at most that of $U$.
Therefore
\begin{align*}
N_{\varepsilon}(\Map_\mu(\rho, F_\ell, L, \delta, \sigma), \rho_\infty)&=
|D|\le \exp(\kappa d)|D'|\le \exp(2\kappa d)|W|\le  \exp(2\kappa d)|U|\\
&\le \exp(2\kappa  d)\exp\bigg(\bigg(\sum_{R\in \cR}\xi_R \mu(R)+3\kappa\bigg) d\bigg)\\
&=\exp\bigg(\bigg(\sum_{R\in \cR}\xi_R \mu(R)+5\kappa\bigg)d\bigg) ,
\end{align*}
as we aimed to show.
\end{proof}

\begin{lemma}\label{L-lower bound}
Let $G$ be an amenable countably infinite discrete group acting continuously on a compact metrizable space $X$
and $\mu$ a $G$-invariant Borel probability measure on $X$.
Let $\rho$ be a compatible metric on $X$. Then
\[
h_{\Sigma, \mu ,\infty}(\rho)\ge h_{\mu}(X, G).
\]
\end{lemma}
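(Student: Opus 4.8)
The plan is to mirror the structure of the topological lower bound (Lemma~\ref{L-ineq2}) and of the measure upper bound (Lemma~\ref{L-supremum upper bound}), replacing the open-cover input by a finite Borel partition and the spanning-set count by a separated-set count driven by the Shannon--McMillan theorem. Since $h_\mu(X,G)=\sup_\cQ h_\mu(\cQ)$, it suffices to fix a finite Borel partition $\cQ$ with atoms of small diameter together with a $\kappa>0$ and to produce an $\varepsilon>0$ for which $h^\varepsilon_{\Sigma,\mu,\infty}(\rho)\ge h_\mu(\cQ)-O(\kappa)$; taking the supremum over $\varepsilon$ and then over $\cQ$, and letting $\kappa\to 0$, will give the claim. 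As in Lemma~\ref{L-supremum upper bound} I would first pass to a finite $G$-invariant Borel partition $\cR$ on whose atoms the function $x\mapsto h_{\mu_x}(\cQ)$ varies by less than $\kappa$, set $b_R=\inf_{x\in R}h_{\mu_x}(\cQ)$, and note that $\sum_{R\in\cR}\mu(R)b_R\ge h_\mu(\cQ)-\kappa$ by the ergodic decomposition of entropy. Each $R$ is $G$-invariant, so $\mu_R=\mu(\,\cdot\,\cap R)/\mu(R)$ is a $G$-invariant measure with the same ergodic components, which is what makes the statistics on each region behave uniformly.

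The entropy is produced by the following engine. Applying the Shannon--McMillan theorem (Lemma~\ref{L-SM}) to each $\mu_R$ and to a sufficiently left invariant F{\o}lner set $F_k$, I would extract a family $\cA_{R,k}$ of $\cQ^{F_k}$-atoms that $\mu_R$-covers all but a $\tau$-fraction of $R$ and each of whose members has $\mu_R$-measure at most $\exp(-(b_R-\kappa)|F_k|)$, so that $|\cA_{R,k}|\ge (1-\tau)\exp((b_R-\kappa)|F_k|)$. To pass from atoms to metrically separated points I would use the regularity of $\mu$ to replace each atom $Q\in\cQ$ by a compact core $Q^\circ\subseteq Q$ with $\mu(Q\setminus Q^\circ)$ small; the finitely many cores are then pairwise disjoint compact sets, and I set $\varepsilon$ to be a fraction of $\min_{Q\neq Q'}\rho(Q^\circ,(Q')^\circ)>0$. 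Then two points whose $\cQ^{F_k}$-codes disagree at a coordinate $s$ where both orbit points $sx$ and $sx'$ land in cores are automatically $\rho_{F_k}$-separated at scale $\varepsilon$.

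With these ingredients fixed I would take, for arbitrary given $F$, $L$ and $\delta$, F{\o}lner sets $e\in F_1\subseteq\cdots\subseteq F_\ell$ far more invariant than $F$ and than the data coming from $L$, apply the Rokhlin lemma (Lemma~\ref{L-Rokhlin}/\ref{L-Rokhlin2}) to tile a $(1-o(1))$-fraction of $\{1,\dots,d\}$ by disjoint copies $\sigma(F_k)c$ of the $F_k$, and allocate these tiles among the regions so that region $R$ receives tiles of total size $\approx\mu(R)d$. On each tile whose centre is assigned to $R$ I would plant the partial orbit $\varphi(sc)=s\,x$ of a representative point $x$ chosen from the cores of the typical atoms in $\cA_{R,k}$, letting the representative vary over the tiles. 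Distinct such assignments yield maps that differ, at some tile, in their $\cQ^{F_k}$-code, and---after restricting to a subfamily of codes that are pairwise far apart in Hamming distance so as to force a core-versus-core disagreement---yield maps that are $\rho_\infty$-separated at scale $\varepsilon$. Approximate equivariance of these maps ($\varphi\in\Map_\mu$, condition (i)) comes from the tile structure together with the left invariance of the $F_k$ and goodness of $\sigma$, exactly as in Lemma~\ref{L-ineq2}; the pushforward condition (ii), $|\varphi_*\zeta(f)-\mu(f)|<\delta$ for $f\in L$, is arranged by choosing the representatives with frequencies matching the $\mu_R$-masses of the atoms, so that the empirical distribution of representatives approximates $\mu_R$ and the F{\o}lner average $\frac1{|F_k|}\sum_{s\in F_k}s_*(\cdot)$ of the planted distribution reproduces $\mu_R$ by $G$-invariance of $\mu_R$; summing over $R$ with weights $\mu(R)$ reproduces $\mu$. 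Counting, the number of admissible assignments is a product of multinomial coefficients which, because the typical atoms have nearly equal mass by Shannon--McMillan, is at least $\exp\big((\sum_R\mu(R)b_R-O(\kappa))d\big)\ge\exp\big((h_\mu(\cQ)-O(\kappa))d\big)$.

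The main obstacle is to carry out the separation and the pushforward condition simultaneously without eroding this count. The difficulty is that distinct $\cQ^{F_k}$-atoms need not be metrically separated, because of the partition boundaries, while the measure constraint forces the representatives to be distributed according to $\mu$; reconciling the two requires the compact cores together with the fact---a F{\o}lner/Brin--Katok type equidistribution statement, in the spirit of the proof of Theorem~1.1 of \cite{Kat80}---that a $\mu_R$-typical orbit segment spends all but an $o(1)$-fraction of its time in the cores, after which codes differing in a definite fraction of coordinates give genuinely $\rho_{F_k}$-separated representatives and the Hamming restriction costs only a subexponential factor. I also note that the argument uses $G$ infinite throughout (the ergodic decomposition of entropy and the equidistribution step), so the finite-$G$ case must, as elsewhere in this section, be treated separately.
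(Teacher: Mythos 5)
Your overall architecture is the one the paper uses: reduce to a finite partition $\cQ$, pass to a $G$-invariant partition on which the ergodic fibre entropy $x\mapsto h_{\mu_x}(\cQ)$ is nearly constant, extract exponentially many nearly-equal-mass atoms of $\cQ^{F_k}$ via the Shannon--McMillan theorem, tile the sofic approximation by Lemma~\ref{L-Rokhlin2}, plant partial orbits of representative points on the tiles, and count. Your separation device (compact cores of the atoms, plus the observation that a typical orbit segment spends all but an $O(\kappa)$-fraction of its time in the cores) is an acceptable substitute for the paper's device (restricting to partitions with $\mu(\partial Q)=0$ and deleting the $2\varepsilon$-collars $Q^{2\varepsilon}\setminus Q$); both rest on the same Markov-inequality estimate for the set $D_{F'}$ of points whose $F'$-orbit mostly avoids the bad region, so no Brin--Katok-type input is actually needed there. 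Your Hamming-distance pruning costs the same subexponential factor as the paper's step of fixing the bad coordinate set $J_{B,F'}$ and losing $|\cQ|^{\kappa|F'|}\binom{|F'|}{\kappa|F'|}$.

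The genuine gap is in your treatment of the pushforward condition $|(\varphi_h)_*\zeta(f)-\mu(f)|<\delta$ for $f\in L$. You propose to deduce it from ``frequencies of representatives matching the $\mu_R$-masses of the atoms'' plus $G$-invariance of $\mu_R$. But the planted measure is $\frac{1}{|F_k|}\sum_{s\in F_k}s_*\nu$ where $\nu$ is supported on one representative per $\cQ^{F_k}$-atom, and to conclude $\int f\,d(s_*\nu)\approx\int f\,d(s_*\mu_R)=\int f\,d\mu_R$ you need $\nu$ to approximate $\mu_R$ against the functions $f\circ\alpha_s$, $s\in F_k$. Frequency matching only gives approximation against functions that are nearly constant on the atoms of $\cQ^{F_k}$, i.e.\ against $f$ that oscillate little on each atom of $\cQ$ --- and in the definition of $h^\varepsilon_{\Sigma,\mu,\infty}(\rho)$ the finite set $L$ is quantified \emph{after} $\cQ$ and $\varepsilon$ are fixed, so you cannot arrange $\cQ$ to be fine relative to $L$. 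The paper closes exactly this gap with the mean ergodic theorem: it chooses a $G$-invariant partition $\cB$ on whose atoms the functions $x\mapsto\mu_x(f)$, $f\in L$, are nearly constant, and then selects the representatives inside a set $W_{F_k}$ of measure close to $1$ on which $\bigl|\,|F_k|^{-1}\sum_{s\in F_k}f(sx)-\mu_x(f)\bigr|$ is small for all $f\in L$; this is legitimate because $F_k$ (unlike $\cQ$) may be chosen after $L$. You should replace the invariance argument by this ergodic-theorem selection of representatives; with that change the rest of your outline goes through.
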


\begin{proof}
By \cite[Lemmas 5.3.6 and 5.3.4]{Mou85}, the entropy $h_{\mu}(X, G)$ is equal to the supremum of $h_{\mu}(\cQ)$ for $\cQ$ ranging over finite Borel partitions
of $X$ with $\max_{Q\in \cQ}\mu(\partial Q)=0$, where $\partial Q$ denotes the boundary of $Q$.
Thus it suffices to show that $h_{\Sigma, \mu ,\infty}(\rho)\ge h_{\mu}(\cQ)$ for every such $\cQ$.
Let $\cQ$ be such a partition.

Since $h_{\mu}(\cQ)=\int_X h_{\mu_x}(\cQ)\, d\mu(x)$ and the function $x\mapsto h_{\mu_x}(\cQ)$ is $X_{\cB, G}$-measurable,
where $\cB_{X, G}$ denotes
the $\sigma$-algebra of $G$-invariant Borel subsets of $X$ and $x\mapsto \mu_x$ is the Borel map from $X$ to $\cM^\erg (X,G)$ described at the beginning of the section, it suffices to show that,
for every nonnegative simple $\cB_{X, G}$-measurable function $g$ on $X$
with $g(x)\le h_{\mu_x}(\cQ)$ for every $x\in X$, one has $h_{\Sigma, \mu ,\infty}(\rho)\ge \int_Xg\, d\mu$.
Let $g$ be such a function.

It is enough to show that for every $\theta>0$ there is an $\varepsilon>0$ such that
$h^{\varepsilon}_{\Sigma, \mu ,\infty}(\rho)\ge \int_Xg \,d\mu-2\theta$.
So let $\theta > 0$.

Let $0<\eta<\theta/8$. Also let $\kappa>0$, which we will determine in a moment.
For every
$\varepsilon > 0$ and $Q\in \cQ$ we write
$Q^{\varepsilon}$ for the open $\varepsilon$-neighbourhood of $Q$ with respect to $\rho$.
Thus $\bigcap_{\varepsilon > 0} Q^{2\varepsilon} = \overline{Q}$ for each $Q\in \cQ$
where $\overline{Q}$ denotes the closure of $Q$.
As $\max_{Q\in \cQ}\mu(\partial Q)=0$, we can find
a particular $\varepsilon$,
which we now fix, such that $\sum_{Q\in \cQ} \mu (Q^{2\varepsilon} \setminus Q ) < \kappa^2$.
Set $D = X\setminus \bigcup_{Q\in \cQ} (Q^{2\varepsilon} \setminus Q)$. Now it suffices to show
$h^{\varepsilon}_{\Sigma, \mu ,\infty}(\rho)\ge \int_Xg \,d\mu-2\theta$.

Let $F$ be a nonempty finite subset of $G$, $L$ a finite subset of $C(X)$, and $\delta>0$.
Let $\sigma$ be a map from $G$ to $\Sym (d)$ for some $d\in\Nb$. It suffices to show that if $\sigma$
is a good enough sofic approximation then
\begin{align}\label{E-infty lower}
\frac{1}{d} \log N_{\varepsilon} (\Map_\mu (\rho ,F, L,\delta ,\sigma ),\rho_{\infty} )
\ge \int_Xg\, d\mu-2\theta.
\end{align}

Given a nonempty finite subset $F'$ of $G$, set
\[
D_{F'} = \bigg\{ x\in X : \sum_{s\in F'} 1_D (sx) \geq (1-\kappa )|F' | \bigg\} .
\]
Then
\begin{align*}
\kappa |F'| \mu (X\setminus D_{F'}) &\leq \int_{X\setminus D_{F'}} \sum_{s\in F'} 1_{X\setminus D} (sx)\, d\mu\\
&\leq \int_X \sum_{s\in F'} 1_{X\setminus D} (sx)\, d\mu = \mu(X\setminus D)|F' | < \kappa^2 |F' |,
\end{align*}
so that $\mu (D_{F'}) >1-\kappa$.
By Lemma~\ref{L-SM},
when $F'$ is sufficiently left invariant there is
a Borel subset $V_{F'}$ of $X$ with $\mu(V_{F'})\ge 1-\kappa$ such that
if $x\in V_{F'}$ and the atom of $\cQ^{F'}$ containing $x$ is $A$ then
\[
\mu (A) \leq \exp(-(h_{\mu_x}(\cQ)-\eta)|F'|)\leq \exp(-(g(x)-\eta)|F'|).
\]

Since $g$ and the functions $x\mapsto \mu_x(f)$ on $X$ for $f\in L$ are all $\cB_{X, G}$-measurable, we can take a finite $\cB_{X, G}$-measurable partition $\tilde{\cB}$ of $X$ such that $g$ is a constant function on each atom of $\tilde{\cB}$ and for every $B\in \tilde{\cB}$ one has
\[
\max_{f\in L}\Big(\sup_{x\in B}\mu_x(f)-\inf_{y\in B}\mu_y(f)\Big)<\frac{\delta}{8} .
\]
Denote by $\cB$ the set of atoms of $\tilde{\cB}$ with positive $\mu$-measure.
Set $\tau=\min_{B\in \cB}\mu(B)$.
The mean ergodic theorem \cite[page 44]{Mou85} states that, as the nonempty finite set $F'\subseteq G$
becomes more and more left invariant,
$|F'|^{-1}\sum_{s\in F'}\alpha_s(f)$ converges to $\Eb_{\mu}(f|\cB_{X, G})$ in $L^2(X, \cB_X, \mu)$ for every $f\in L^2(X, \cB_X, \mu)$,
where $\cB_X$ denotes the $\sigma$-algebra of Borel subsets of $X$.
Thus,  when $F'$ is sufficiently right invariant, in other words when $(F')^{-1}$ is sufficiently left invariant, we
can find a Borel subset $W_{F'}$ of $X$ such that $\mu(W_{F'})>1-\min(\kappa, \tau/2)$
and $\big||F'|^{-1}\sum_{s\in F'}f(sx)-\mu_x(f)\big|<\delta/8$ for all
$f\in L$ and $x\in W_{F'}$. In particular,
when $F'$ is sufficiently right invariant we can find, for each $B\in \cB$,
a point $x_{B, F'}\in B$ such that $\big||F'|^{-1}\sum_{s\in F'}f(sx_{B, F'})-\mu_{x_{B, F'}}(f)\big|<\delta/8$
for all $f\in L$.

Now consider a nonempty finite subset $F'$ of $G$ which is sufficiently two-sided invariant so that both
$V_{F'}$ and $W_{F'}$ exist. For each $B\in \cB$, write $\cA_{B, F'}$ for the collection
of all $A\in \cQ^{F'}$ such that $\mu(A\cap B\cap D_{F'}\cap V_{F'}\cap W_{F'})>0$.
For each $A\in\cA_{B, F'}$ pick a point $x_A \in A\cap B\cap D_{F'}\cap V_{F'}\cap W_{F'}$ and set $E'_{B, F'} = \{ x_A : A\in\cA_{B, F'} \}$.
Denote by $g_B$ the constant value of $g$ on $B$.

{\bf Claim I:} Assuming $\kappa$ is small enough there is,
for each $B\in \cB$, a $(\rho_{F'}, \varepsilon)$-separated subset $E_{B, F'}$ of $E'_{B, F'}$ such that
\begin{align} \label{E-lower Claim I}
|E_{B, F'}|\ge \mu(B\cap D_{F'}\cap V_{F'}\cap W_{F'})\exp(\max(g_B-2\eta ,0)|F'|).
\end{align}

To verify Claim I, note first that
\[
\mu\Big(\bigcup\cA_{B, F'}\Big)\ge \mu(B\cap D_{F'}\cap V_{F'}\cap W_{F'}),
\]
and since $\mu(A)\le \exp(-(g(x_A)-\eta)|F'|)=\exp(-(g_B-\eta)|F'|)$ for every $A\in \cA_{B, F'}$, one has
\begin{align*}
|\cA_{B, F'}|&\ge \mu\Big(\bigcup\cA_{B, F'}\Big)/\exp(-(g_B-\eta)|F'|)\\
&\ge \mu(B\cap D_{F'}\cap V_{F'}\cap W_{F'})\exp((g_B-\eta)|F'|).
\end{align*}

For each $x\in E'_{B, F'}$, since $x\in D_{F'}$ there exists a $J_x\subseteq F'$ with
$|J_x|=|F'|-\lfloor \kappa |F'|\rfloor$ such that $sx\in D$ for every $s\in J_x$, where $\lfloor t\rfloor$ denotes the largest integer no bigger than $t$.
Then there exists an $E''_{B, F'}\subseteq E'_{B, F'}$ with $\binom{|F'|}{\kappa |F'|}|E''_{B, F'}|\ge |E'_{B, F'}|$ such
that $J_x$ is the same, say $J_{B, F'}$,  for all $x\in E''_{B, F'}$.


Let $x\in E''_{B, F'}$. Let $y\in E''_{B, F'}$ be such that $\rho_{F'}(x, y)\le \varepsilon$.
Then $sx$ and $sy$ lie in the same atom of $\cQ$ for each $s\in J_{B, F'}$, for if
$sx$ and $sy$ were contained in different atoms of $\cQ$ for some $s\in J_{B, F'}$ then since $sx, sy\in D$ we
would have $\rho(sx, sy)\ge 2\varepsilon$,
which is impossible since $s\in F'$.
It follows that there are at most
$|\cQ|^{|F'|-|J_{B, F'}|}$ many $y\in E''_{B, F'}$ satisfying $\rho_{F'}(x, y)\le \varepsilon$. Hence there must exist a
$(\rho_{F'}, \varepsilon)$-separated subset
$E_{B, F'}$ of $E''_{B, F'}$ such that $|\cQ|^{|F'|-|J_{B, F'}|}|E_{B, F'}|\ge |E''_{B, F'}|$. We then have
\begin{align*}
|E_{B, F'}|&\ge |\cQ|^{|J_{B, F'}|-|F'|}|E''_{B, F'}|
\ge |\cQ|^{-\kappa |F'|}|E'_{B, F'}| \binom{|F'|}{\kappa |F'|}^{-1}\\
&=|\cQ|^{-\kappa |F'|}|\cA_{B, F'}|\binom{|F'|}{\kappa |F'|}^{-1}\\
&\ge \mu(B\cap D_{F'}\cap V_{F'}\cap W_{F'})\exp((g_B-\eta)|F'|)|\cQ|^{-\kappa |F'|}\binom{|F'|}{\kappa |F'|}^{-1} .
\end{align*}
Stirling's approximation then implies that when $\kappa$ is small enough we have
the inequality \eqref{E-lower Claim I} for all sufficiently right invariant $F'$. This proves Claim I.

Let $\delta'>0$ be such that $\delta'<\tau$, $4\delta'|\cB|\max_{f\in L} \| f \|_\infty <\delta$ and
$\delta'\sum_{B\in \cB}g_B<\theta/4$.
Let $M$ be a large positive integer to be specified below.

Let $\delta'' > 0$, which we will determine in a moment.
It follows from Lemma~\ref{L-Rokhlin2} that there are an $\ell\in \Nb$
and
sufficiently two-sided invariant nonempty finite subsets
$F_1, \dots, F_\ell$ of $G$ such that for every map $\sigma : G\to\Sym(d)$ for some $d\in \Nb$ which is a good
enough sofic approximation for $G$ there exist  $C_1, \dots, C_\ell\subseteq \{1, \dots, d\}$ such that
\begin{enumerate}
\item for every $k=1, \dots, \ell$, the map $(s, c)\mapsto \sigma_s(c)$ from $F_k\times C_k$ to $\sigma(F_k)C_k$ is bijective,

\item the family $\{ \sigma(F_1)C_1, \dots, \sigma(F_\ell)C_\ell \}$ is disjoint and $(1-\delta'')$-covers $\{1, \dots, d\}$.
\end{enumerate}
Write $\Lambda$ for the set of all $k\in\{1, \dots, \ell\}$ such that $|C_k|\ge M$. Taking $M$ to be large enough,
for every $k\in \Lambda$ we can find a partition $\{C_{k, B}\}_{B\in \cB}$ of $C_k$ such that
$||C_{k, B}|/|C_k|-\mu(B)|<\delta'$ for every $B\in \cB$.

For each $k\in \Lambda$, set $\cB'_k=\{B\in \cB: \mu(B\cap D_{F_k}\cap V_{F_k}\cap W_{F_k})\ge \tau/2\}$.
If $B\in \cB\setminus \cB'_k$, then $\mu(B\setminus (D_{F_k}\cap V_{F_k}\cap W_{F_k}))>\tau/2>\mu(B\cap D_{F_k}\cap V_{F_k}\cap W_{F_k})$,
and hence $\mu(B)<2\mu(B\setminus (D_{F_k}\cap V_{F_k}\cap W_{F_k}))$.
Since $\mu( X\setminus (D_{F_k}\cap V_{F_k}\cap W_{F_k}))<3\kappa$, we have
\[
\mu\Big(\bigcup (\cB\setminus \cB'_k)\Big)\le 2\mu(X\setminus (D_{F_k}\cap V_{F_k}\cap W_{F_k}))<6\kappa.
\]
Taking $\kappa$ to be small enough, we may require that $\int_Yg\, d\mu<\theta/4$ for every Borel
set $Y\subseteq X$ with $\mu(Y)<6\kappa$. Then
\begin{align} \label{E-lower}
 \int_{\bigcup (\cB\setminus \cB'_k)}g\, d\mu<\theta/4.
\end{align}

For each $h = (h_{k,B} )_{k,B} \in \prod_{k\in \Lambda}\prod_{B\in \cB'_k}(E_{B, F_k})^{C_{k, B}}$, take
a map $\varphi_h$ from $\{1, \dots, d\}$ to $X$ such that
for every $k\in \Lambda$, $B\in \cB$, $c\in C_{k, B}$, and $s\in F_k$ the point
$\varphi_h(s c)$  is equal to $s(h_{k,B} (c))$ or $s(x_{B, F_k})$
depending on whether $B\in \cB'_k$ or $B\in \cB\setminus \cB'_k$.

{\bf Claim II}:
Assuming $F_1, \dots, F_\ell$ are sufficiently left invariant, $\delta''$ is small enough, and
$\sigma$ is a good enough sofic approximation for $G$, one has $|\bigcup_{k\in \Lambda}\sigma(F_k)C_k|>(1-2\delta'')d$ and the map $\varphi_h$ lies in
$\Map_\mu(\rho, F, L, \delta, \sigma)$ for every $h\in \prod_{k\in \Lambda}\prod_{B\in \cB'_k}(E_{B, F_k})^{C_{k, B}}$.

To verify Claim II, suppose we are given $k\in \Lambda$, $B\in \cB'_k$ and $f\in L$.
Since $B\in \cB_{X, G}$, we have
\begin{align*}
\int_Bf\, d\mu=\int_B\Eb_{\mu}(f|\cB_{X, G})\, d\mu=\int_B\mu_x(f)\, d\mu(x).
\end{align*}
For each $c\in C_{k, B}$, since $h_{k,B} (c)\in E_{B, F_k}\subseteq W_{F_k}\cap B$ one has
\begin{align*}
\lefteqn{\bigg|\frac{1}{|F_k|}\sum_{s\in F_k}f(\varphi_h(s c))-\frac{1}{\mu(B)}\int_Bf\, d\mu\bigg| } \hspace*{10mm}\\
\hspace*{10mm} &\le \bigg|\frac{1}{|F_k|}\sum_{s\in F_k}f(s(h_{k,B}(c)))-\mu_{h_{k,B}(c)}(f)\bigg|
+\bigg|\mu_{h_{k,B}(c)}(f)-\frac{1}{\mu(B)}\int_B\mu_x(f)\, d\mu(x)\bigg|\\
&<\frac{\delta}{8}+\frac{\delta}{8}=\frac{\delta}{4}.
\end{align*}
As $|\sigma(F_k)C_{k, B}|^{-1}\sum_{a\in \sigma(F_k)C_{k, B}}f(\varphi_h(a))$ is a convex combination of the quantities
$|F_k|^{-1}\sum_{s\in F_k}f(\varphi_h(s c))$ for $c\in C_{k, B}$, we get
\begin{align} \label{E-lower bound}
\bigg|\frac{1}{|\sigma(F_k)C_{k, B}|}\sum_{a\in \sigma(F_k)C_{k, B}}f(\varphi_h(a))-\frac{1}{\mu(B)}\int_Bf\, d\mu\bigg|
<\delta/4.
\end{align}
Inequality~\eqref{E-lower bound} also holds similarly for all
$k\in \Lambda$, $B\in \cB\setminus \cB'_k$, and $f\in L$.
Thus, for all $k\in \Lambda$ and $f\in L$ we have
\begin{align*}
\lefteqn{\bigg|\frac{1}{|\sigma(F_k)C_k|}\sum_{a\in \sigma(F_k)C_k}f(\varphi_h(a))-\int_Xf\, d\mu\bigg|} \hspace*{10mm}\\
\hspace*{10mm} &\le \bigg|\sum_{B\in \cB}\frac{|\sigma(F_k)C_{k, B}|}{|\sigma(F_k)C_k|}\cdot
\frac{1}{|\sigma(F_k)C_{k, B}|}\sum_{a\in \sigma(F_k)C_{k, B}}f(\varphi_h(a)) \\
&\hspace*{50mm} \ - \sum_{B\in \cB}\frac{|\sigma(F_k)C_{k, B}|}{|\sigma(F_k)C_k|}\cdot \frac{1}{\mu(B)}\int_Bf\, d\mu\bigg|\\
&\hspace*{10mm} \ + \bigg|\sum_{B\in \cB}\frac{|\sigma(F_k)C_{k, B}|}{|\sigma(F_k)C_k|}\cdot
\frac{1}{\mu(B)}\int_Bf\, d\mu-\sum_{B\in \cB}\mu(B)\cdot \frac{1}{\mu(B)}\int_Bf\, d\mu\bigg|\\
&< \frac{\delta}{4}+\delta'|\cB|\max_{f\in L} \| f \|_\infty \le \frac{\delta}{4}+\frac{\delta}{4}=\frac{\delta}{2}.
\end{align*}
For all $f\in L$, as $|\bigcup_{k\in \Lambda}\sigma(F_k)C_k|^{-1}\sum_{a\in \bigcup_{k\in \Lambda}\sigma(F_k)C_k}f(\varphi_h(a))$
is a convex combination of the quantities $|\sigma(F_k)C_k|^{-1}\sum_{a\in \sigma(F_k)C_k}f(\varphi_h(a))$ for $k\in \Lambda$, we get
\[
\bigg|\frac{1}{|\bigcup_{k\in \Lambda}\sigma(F_k)C_k|}\sum_{a\in \bigcup_{k\in \Lambda}\sigma(F_k)C_k}f(\varphi_h(a))-\int_Xf\, d\mu\bigg|<\frac{\delta}{2}.
\]
Note that if $\sigma$ is a good enough sofic approximation for $G$ then $d$ will be large enough so that
the family $\{\sigma(F_k)C_k: k\in \Lambda\}$ is a
$(1-2\delta'')$-covering of $\{1, \dots, d\}$.
It follows that,
when $F_1, \dots, F_\ell$ are sufficiently left invariant, $\delta''$ is small enough,
and $\sigma$ is a good enough sofic approximation for $G$, the map $\varphi_h$ lies in
$\Map_\mu(\rho, F, L, \delta, \sigma)$.
This proves Claim II.

{\bf Claim III:}
\begin{align} \label{E-lower Claim III}
\lefteqn{\frac{1}{d}\log N_{\varepsilon}(\Map_\mu(\rho, F, L, \delta, \sigma), \rho_{\infty})}\hspace*{30mm}\\
\nonumber \hspace*{30mm} &\ge \frac{1}{\min_{1\le k\le \ell}|F_k|}\log \frac{\tau}{2}+(1-2\delta'')\Big(\int_Xg\, d\mu-\theta\Big).
\end{align}

To verify Claim III, note first that
if $h=(h_{k, B})_{k, B}$ and $h'=(h'_{k, B})_{k, B}$ are distinct elements of $\prod_{k\in \Lambda}\prod_{B\in \cB'_k}(E_{B, F_k})^{C_{k, B}}$,
then $h_{k, B}(c)\neq h'_{k, B}(c)$ for some $k\in \Lambda$, $B\in \cB'_k$, and $c\in C_{k, B}$. Since $h_{k, B}(c)$ and $h'_{k, B}(c)$ are
$(\rho_{F_k}, \varepsilon)$-separated, we see that $\rho_{\infty}(\varphi_h, \varphi_{h'})\ge \varepsilon$.
Therefore
\begin{align*}
\lefteqn{N_{\varepsilon}(\Map_\mu(\rho, F, L, \delta, \sigma), \rho_{\infty})}\hspace*{10mm}\\
\hspace*{10mm} &\ge \bigg|\prod_{k\in \Lambda}\prod_{B\in \cB'_k}(E_{B, F_k})^{C_{k, B}}\bigg|\\
&\overset{\eqref{E-lower Claim I}}\ge \prod_{k\in \Lambda}\prod_{B\in \cB'_k}(\mu(B\cap D_{F_k}\cap V_{F_k}\cap W_{F_k}))^{|C_{k, B}|}
\exp(\max(g_B-2\eta ,0)|F_k|\cdot |C_{k, B}|)\\
&\ge \prod_{k\in \Lambda}\prod_{B\in \cB'_k}\Big(\frac{\tau}{2}\Big)^{|C_{k, B}|}\exp(\max(g_B-2\eta ,0)|F_k|\cdot |C_{k, B}|)\\
&\ge \prod_{k\in \Lambda}\Big(\frac{\tau}{2}\Big)^{\sum_{B\in \cB'_k}|C_{k, B}|}\exp\bigg(|F_k|\cdot |C_k|
\sum_{B\in \cB'_k}\max(g_B-2\eta ,0)(\mu(B)-\delta')\bigg)\\
&\ge \prod_{k\in \Lambda}\Big(\frac{\tau}{2}\Big)^{|C_k|}\exp\bigg(|F_k|\cdot |C_k|\sum_{B\in \cB'_k}(g_B-2\eta)(\mu(B)-\delta')\bigg)\\
&\ge \Big(\Big(\frac{\tau}{2}\Big)^{1/\min_{1\le k\le \ell}|F_k|}\Big)^d
\prod_{k\in \Lambda}\exp\bigg(|F_k|\cdot |C_k|\sum_{B\in \cB'_k}(g_B-2\eta)(\mu(B)-\delta')\bigg).
\end{align*}
For $k\in \Lambda$, one has
\begin{align*}
\sum_{B\in \cB'_k}(g_B-2\eta)(\mu(B)-\delta')&\ge \sum_{B\in \cB'_k}g_B\mu(B)- \delta'\sum_{B\in\cB'_k}g_B- 2\eta\sum_{B\in \cB'_k}(\mu(B)-\delta')\\
&\ge \int_{\bigcup \cB'_k}g\, d\mu-\delta'\sum_{B\in \cB}g_B-2\eta\\
&> \Big(\int_Xg\, d\mu-\theta/4\Big)-\theta/4-\theta/4>\int_Xg\, d\mu-\theta,
\end{align*}
where \eqref{E-lower} has been used to obtain the second last inequality.
Thus
\begin{align*}
\lefteqn{N_{\varepsilon}(\Map_\mu(\rho, F, L, \delta, \sigma), \rho_{\infty})}\hspace*{20mm}\\
\hspace*{20mm}&\ge \Big(\Big(\frac{\tau}{2}\Big)^{1/\min_{1\le k\le \ell}|F_k|}\Big)^d\prod_{k\in \Lambda}
\exp\Big(|F_k|\cdot |C_k|\Big(\int_Xg\, d\mu-\theta\Big)\Big)\\
&\ge \Big(\Big(\frac{\tau}{2}\Big)^{1/\min_{1\le k\le \ell}|F_k|}\Big)^d\exp\Big(d(1-2\delta'')\Big(\int_Xg\, d\mu-\theta\Big)\Big),
\end{align*}
and hence the inequality \eqref{E-lower Claim III} holds. This proves Claim III.

Now by taking $\delta''$ to be small enough and $F_1, \dots, F_\ell$ to be sufficiently left invariant, we obtain
\begin{align*}
\frac{1}{\min_{1\le k\le \ell}|F_k|} \log \frac{\tau}{2} + (1-2\delta'' ) \bigg(\int_X g\, d\mu - \theta \bigg)
\ge \int_X g\, d\mu - 2\theta ,
\end{align*}
yielding \eqref{E-infty lower}.
\end{proof}

When a finite group $G$ acts on a standard probability space $(X,\mu )$
by measure-preserving transformations, from the definition of $h_\mu(X,G)$
given at the beginning of this section we have
$h_\mu(X,G) =+\infty$ when $\sum_{x\in X}\mu(\{x\})<1$, and $h_\mu(X,G) =|G|^{-1}\sum_{x\in X}\xi(\mu(\{x\}))$ when $\sum_{x\in X}\mu(\{x\})=1$, where $\xi(t)=-t\log t$ for $t\in [0, 1]$.

\begin{lemma} \label{L-finite upper bound}
Let $G$ be a finite group acting on a standard probability space $(X,\mu )$
by measure-preserving transformations.
Then
\[
h_{\Sigma ,\mu} (X,G) \le h_\mu (X,G) .
\]
\end{lemma}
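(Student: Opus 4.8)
The plan is to reduce to a topological model and to count approximately equivariant maps through the free-orbit structure of the sofic approximations, converting the measure condition into an entropy estimate by way of the concavity of the Shannon entropy. First I would dispose of the case in which $\mu$ is not purely atomic: here $h_\mu (X,G)=+\infty$ and there is nothing to prove. So assume $\mu$ is purely atomic. Since $h_{\Sigma ,\mu}(X,G)$ is a measure-conjugacy invariant I may pass to a topological model, i.e.\ a compact metrizable space on which $G$ acts continuously with invariant Borel measure $\mu$, and then apply Proposition~\ref{P-measure entropy} to work with $h_{\Sigma ,\mu ,\infty}(\rho )$ for a compatible metric $\rho$. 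Fix $\varepsilon>0$ and a small $\kappa>0$. Because $\mu$ is atomic and $G$-invariant its atoms fall into finite $G$-orbits; I choose a $G$-invariant finite set $A$ of atoms with $\mu (A)>1-\kappa$ and let $L$ consist of continuous functions approximating the indicators of the points of $A$, so that condition~(ii) in the definition of $\Map_\mu$ pins down the empirical distribution of a map on the atoms of $A$.

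Next I would extract the orbit structure. Applying Lemma~\ref{L-Rokhlin2} with $K=F_1=G$ (so $\ell =1$) produces, for every good enough $\sigma$, a set $C$ with $(s,c)\mapsto\sigma_s(c)$ injective on $G\times C$ and with the free $G$-orbits $\{\sigma (G)c:c\in C\}$ pairwise disjoint and covering all but an $\eta$-fraction of $\{1,\dots ,d\}$, so that $|C|$ is close to $d/|G|$. For $\varphi\in\Map_\mu (\rho ,G,L,\delta ,\sigma )$ the equivariance condition forces $\rho (\varphi (\sigma_s c),s\varphi (c))<\sqrt{\delta}$ for all $s\in G$ on a set of $c$ of nearly full density, so on most orbits $\varphi$ is within $\sqrt{\delta}$ of the partial orbit $s\mapsto s\varphi (c)$ determined by the single value $\varphi (c)$. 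As $\rho$ need not be $G$-invariant I would discretize these representative values in the dynamical metric $\rho_G$: fixing a finite $(\rho_G ,\varepsilon /4)$-spanning set of $X$, I assign to each $c$ the nearest spanning point, with one extra symbol reserved for the rare $c$ whose value is far from $A$. Two maps with the same discretized representative values then agree to within $\rho_\infty$-distance $<\varepsilon$ on the union of the good orbits, so exactly as in the proof of Lemma~\ref{L-ineq1} an $\varepsilon$-separated family is controlled by the number of discretized configurations times a remainder factor $N_{\varepsilon /4}(X,\rho )^{\eta d}$ accounting for the uncovered points; the latter contributes only $\eta\log N_{\varepsilon /4}(X,\rho )$ to the exponential rate and is absorbed by letting $\eta\to 0$.

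The crux is to bound the number of admissible discretized configurations. Writing $\nu =|C|^{-1}\sum_{c\in C}\delta_{\varphi (c)}$ for the empirical distribution of the representative values, the measure condition translates, using that $\{1,\dots ,d\}$ is essentially the disjoint union of the orbits and that $\varphi$ is approximately equivariant, into $\bar\nu\approx\mu$ on $A$, where $\bar\nu =|G|^{-1}\sum_{s\in G}s_*\nu$ is the $G$-average of $\nu$. By the method of types the number of configurations of a given type $\nu$ is at most $\exp (|C|\,H(\nu )+o(d))$, with $H$ the Shannon entropy, and there are only polynomially many types. The key point is that, since $A$ is $G$-invariant, each $s_*$ merely permutes the atoms and so preserves $H$; concavity of $H$ then yields $H(\nu )\le H(\bar\nu )$, and since $\bar\nu$ is close to the coarsening of $\mu$ onto $A$ (whose entropy is at most $H(\mu )=\sum_x\xi (\mu (\{x\}))$, because grouping atoms only decreases entropy), continuity of $H$ gives $H(\nu )\le H(\mu )+o(1)$ as $\kappa\to 0$. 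Hence the number of configurations is at most $\exp ((H(\mu )+o(1))|C|)$, and with $|C|\le d/|G|$ this gives $\tfrac1d\log N_\varepsilon (\Map_\mu (\rho ,G,L,\delta ,\sigma ),\rho_\infty )\le |G|^{-1}H(\mu )+o(1)=h_\mu (X,G)+o(1)$.

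Taking the limit superior over the sofic approximation sequence, then the infimum over $L$ and $\delta$, and finally letting $\kappa$ and $\eta$ tend to $0$ and $\varepsilon\to 0$ yields $h_{\Sigma ,\mu ,\infty}(\rho )\le h_\mu (X,G)$, which is the claim. I expect the main difficulty to lie precisely in the third paragraph: correctly relating the empirical distribution $\nu$ of the representative values to $\mu$ through the approximate equivariance, and then invoking concavity to turn the averaging constraint $\bar\nu\approx\mu$ into the entropy bound $H(\nu )\le H(\mu )$. The remaining steps are bookkeeping parallel to Lemmas~\ref{L-ineq1} and~\ref{L-supremum upper bound}.
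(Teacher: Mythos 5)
Your proposal is correct, and while it shares the paper's overall skeleton, the key counting step is organized in a genuinely different way. Both arguments pass to a topological model in which the atoms are isolated, use Lemma~\ref{L-Rokhlin2} to decompose the sofic approximation into disjoint free $G$-orbits $\sigma (G)c$, $c\in C$, with $|C|\le d/|G|$, and observe that a map in $\Map_\mu$ is essentially determined by its values on the orbit representatives. The paper then counts these assignments explicitly in two stages: the number of ways to distribute $C$ among the finitely many $G$-orbits $Gz_k$ of atoms with frequencies prescribed by $\mu$ (the factor $M_2$, a Stirling estimate of size $\exp (|C|\sum_k\xi (c_k))$), times the number of positions within each orbit (the factor $M_1=\prod_k|G/G_k|^{|C_k|}$, which forces explicit bookkeeping with the isotropy groups $G_k$); the identity $\sum_k(c_k\log |G/G_k|+\xi (c_k))=\sum_{z\in GZ'}\xi (\mu (\{z\}))$ then recovers $|G|\,h_\mu (X,G)$. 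You bound the whole count at once by the method of types: the constraint $\overline{\nu}\approx\mu$ together with $H(\nu )\le H(\overline{\nu})$ (concavity plus the fact that each $s_*$ permutes the alphabet) replaces both $M_1$ and $M_2$, since the maximum of $H(\nu )$ subject to $\overline{\nu}=\mu$ is exactly $\sum_z\xi (\mu (\{z\}))$, attained at $\nu =\mu$, and the isotropy groups never appear. That is a real simplification of the combinatorics. Two points of care in writing it up: first, the entropy inequality must be applied to the distribution on your \emph{finite} discretized alphabet, with the mass off $A$ lumped into the reserved symbol — the empirical distribution of the actual values can have entropy as large as $\log |C|$ — and this works because coarsening only decreases $H$ and commutes with the $G$-averaging since $A$ is $G$-invariant; second, working with $\rho_\infty$ you genuinely need the spanning-set factor $N_{\varepsilon /4}(X,\rho )^{\eta d}$ on the uncovered and exceptional points as in Lemma~\ref{L-ineq1}, whereas the paper sidesteps this by using $\rho_2$ with $\diam_\rho (X)\le 1$: maps sharing the same configuration are then literally equal on a set of density $>1-\varepsilon^2$ (the values being atoms), hence within $\rho_2$-distance $\varepsilon$, so each configuration contributes at most one element to a separated set. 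Both devices are legitimate, and your argument closes.
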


\begin{proof}
Set $\xi(t)=-t\log t$ for $t\in [0, 1]$. We may assume that $h_{\mu}(X, G)<+\infty$. Then there is a $G$-invariant countable subset $Z$ of $X$ with $\sum_{z\in Z}\mu(\{z\})=1$, $\mu(\{z\})>0$ for every $z\in Z$, and
$h_\mu(X, G)=|G|^{-1}\sum_{z\in Z}\xi(\mu(\{z\}))$. Let $\kappa>0$. It suffices to show
that $h_{\Sigma, \mu}(X, G)\le h_\mu(X, G)+3\kappa$.

Up to measure conjugacy, we may assume that $X$ is a compact metrizable space such that each point of $Z$ is isolated in $X$, $G$ acts on $X$ continuously, and $\mu$ is a $G$-invariant Borel probability measure on $X$. Let $\rho$ be a compatible metric on $X$ with $\diam_{\rho}(X)\le 1$.
By Proposition~\ref{P-measure entropy} it suffices to show that
$h_{\Sigma, \mu ,2}^{\varepsilon}(\rho)\le h_{\mu}(X, G)+3\kappa$ for every $\varepsilon>0$.

Let $\varepsilon>0$.
Take a finite subset $Z'$ of $Z$ such that the orbits $Gz$ for $z\in Z'$ are pairwise disjoint,  $1-\mu(GZ')<\varepsilon^2/2$, and $\xi(1-\mu(GZ'))<\kappa$.
Say $Z'=\{z_1, \dots, z_n\}$.
For each $k=1,\dots ,n$ write $p_k$ for the characteristic function of $\{z_k\}$ in $C(X)$,
and write $c_k$ and $G_k$ for $\mu(Gz_k)$ and $\{s\in G: sz_k=z_k\}$, respectively.
Let $\tau$ be a strictly positive number
to be specified in a moment.
Let $\delta>0$ be such that the $\rho$-distance of each point in $GZ'$ from any other point in $X$ is bigger than $\sqrt{\delta}$,
$(c_k+\delta|G/G_k|)/(1-\delta)<c_k+\tau$ for every $k=1, \dots, n$,
$(2+|G|)\delta|G|<\tau$, and $n|G|(2+|G|)\delta<\varepsilon^2/2$.
Set $L=\{p_1, \dots, p_n\}$. Now it suffices to show that
$h^{\varepsilon}_{\Sigma, \mu ,2}(\rho, G, L, \delta)\le h_{\mu}(X, G)+3\kappa$.

By Lemma~\ref{L-Rokhlin2}, when a map $\sigma$ from $G$ to $\Sym(d)$ for some $d\in \Nb$ is a good enough sofic approximation for $G$, there exists a subset $C$ of $\{1, \dots, d\}$ such
that the map $(s, c)\mapsto \sigma_s(c)$ from $G\times C$ to $\sigma(G)C$ is bijective, $|\sigma(G)C|/d\ge 1-\delta$, and $\sigma_e(c)=c$ and
$\sigma_s\sigma_t(c)=\sigma_{st}(c)$ for all $c\in C$ and $s, t\in G$.

Let $\varphi\in \Map_\mu(\rho, G, L, \delta, \sigma)$. Let $1\le k\le n$ and $s\in G$.
Set $Y_{k, s, \varphi}=\varphi^{-1}(sz_k)$. When $s=e$, we write $Y_{k, \varphi}$ for $Y_{k, e, \varphi}$.
If $a\in Y_{k, \varphi}$ and $sa\not\in Y_{k, s, \varphi}$, then $\rho(\varphi(sa), s\varphi(a))>\sqrt{\delta}$.
It follows that
$$\frac{1}{d}|(sY_{k, \varphi})\setminus Y_{k, s,\varphi}|\delta\le (\rho_2(\alpha_s\circ \varphi, \varphi\circ \sigma_s))^2<\delta^2,$$
and hence $|(sY_{k, \varphi})\setminus Y_{k, s,\varphi}|/d<\delta$.
Set $Y'_{k, \varphi}=Y_{k, \varphi}\cap \sigma(G)C\cap \bigcap_{s\in G}\sigma_s^{-1}(Y_{k, s, \varphi})$. Then
$|Y_{k, \varphi}\setminus Y'_{k, \varphi}|/d\le (1+|G|)\delta\le \tau$.

For each $k=1, \dots, n$ set $C_{k, \varphi}=\{c\in C: \sigma(G)c\subseteq \sigma(G)Y'_{k, \varphi}\}$.
Then $\sigma(G)C_{k, \varphi}=\sigma(G)Y_{k', \varphi}$.
Note that if $s\in G$ and $t\in sG_k$, then $tz_k=sz_k$ and hence $Y_{k, t, \varphi}=Y_{k, s, \varphi}$. If
$t\in G\setminus sG_k$, then $tz_k\neq sz_k$ and hence $Y_{k,t, \varphi}\cap Y_{k, s, \varphi}=\emptyset$.
Let $a\in Y'_{k, \varphi}$. Then $\sigma(sG_k)a\subseteq \bigcup_{t\in sG_k}Y_{k, t, \varphi}=Y_{k, s, \varphi}$ for every $s\in G$.
It follows that $Y_{k, s, \varphi}\cap \sigma(G)a=\sigma(sG_k)a$ for every $s\in G$. In particular, for each $c\in C_{k, \varphi}$,
the set $Y'_{k, \varphi}\cap \sigma(G)c$ is of the form $\sigma(G_ks)c$ for some $s\in G$. Thus,
given $(C_{1, \varphi}, \dots, C_{n, \varphi})$, the number of possibilities for $(Y'_{1, \varphi}, \dots, Y'_{n, \varphi})$ is at most
\[
M_1:=\prod_{k=1}^n|G/G_k|^{|C_{k, \varphi}|}.
\]

Let $k=1, \dots, n$. We have
\[
\big| |Y_{k,\varphi}|/d-c_k|G/G_k|^{-1} \big| = |(\varphi_*\zeta)(p_k)-\mu(p_k)|<\delta,
\]
and hence
\[
|Y'_{k, \varphi}|/d\le |Y_{k, \varphi}|/d<c_k|G/G_k|^{-1}+\delta
\]
and
\begin{align} \label{E-finite upper bound}
|Y'_{k, \varphi}|/d\ge |Y_{k, \varphi}|/d-(1+|G|)\delta\ge c_k|G/G_k|^{-1}-(2+|G|)\delta.
\end{align}
Since $|Y'_{k, \varphi}|=|C_{k, \varphi}|\cdot |G_k|$, we get
\begin{align*}
|C_{k, \varphi}|/|C|=|G/G_k|\cdot |Y'_{k, \varphi}|/|\sigma(G)C|< (c_k+\delta|G/G_k|)/(1-\delta)<c_k+\tau,
\end{align*}
and
\[
|C_{k, \varphi}|/|C|\ge |G/G_k|\cdot |Y'_{k, \varphi}|/d>c_k-(2+|G|)\delta|G|>c_k-\tau.
\]
Thus
\begin{align*}
M_1\le \prod_{k=1}^n|G/G_k|^{(c_k+\tau)|C|}\le \exp\bigg(\bigg(\kappa+\sum_{k=1}^nc_k\log |G/G_k|\bigg)\frac{d}{|G|}\bigg)
\end{align*}
granted that $\tau$ is small enough.

For each $k=1, \dots, n$, one has $\varphi(C_{k, \varphi})\subseteq \varphi(\sigma(G)Y'_{k, \varphi})\subseteq Gz_k$. Thus the sets $C_{1, \varphi}, \dots, C_{n, \varphi}$ are pairwise disjoint.
Therefore the number of possibilities for the collection $(C_{1, \varphi}, \dots, C_{n, \varphi})$ is at most
\begin{align*}
M_2:=\sum_{j_1, \dots, j_n}\binom{|C|}{j_1}\binom{|C|-j_1}{j_2}\dots \binom{|C|-\sum_{k=1}^{n-1}j_k}{j_n},
\end{align*}
where the sum ranges over all nonnegative integers $j_1, \dots, j_n$ such that
$|j_k/|C|-c_k|<\tau$ for all $1\le k\le n$ and $\sum_{k=1}^nj_k\le |C|$.
By Stirling's approximation, for such $j_1, \dots, j_n$ one has
\begin{align*}
\lefteqn{\binom{|C|}{j_1}\binom{|C|-j_1}{j_2}\cdots \binom{|C|-\sum_{k=1}^{n-1}j_k}{j_n}} \hspace*{30mm}\\
\hspace*{30mm} &\le b \exp\bigg(\bigg(\sum_{k=1}^n\xi(j_k/|C|)+\xi\bigg(1-\sum_{k=1}^nj_k/|C|\bigg)\bigg)|C|\bigg)
\end{align*}
for some $b>0$ independent of $|C|$ and $j_1, \dots, j_n$.
Since the function $\xi$ is continuous,
when $\tau$ is small enough one has
\begin{align*}
\sum_{k=1}^n\xi(t_k)+\xi\bigg(1-\sum_{k=1}^nt_k\bigg)&<\sum_{k=1}^n\xi(c_k)+\xi\bigg(1-\sum_{k=1}^nc_k\bigg)+\kappa\\
&= \sum_{k=1}^n\xi(c_k)+\xi\bigg(1-\mu(GZ')\bigg)+\kappa\\
&<\sum_{k=1}^n\xi(c_k)+2\kappa
\end{align*}
whenever $t_k\ge 0$,
$|t_k-c_k|<\tau$ for all $k=1,\dots ,n$ and $\sum_{k=1}^nt_k\le 1$. Therefore
\begin{align*}
M_2&\le b(2\tau d)^n\exp\bigg(\bigg(\sum_{k=1}^n\xi(c_k)
+2\kappa\bigg)\frac{d}{|G|}\bigg).
\end{align*}

Let $D$ be a $(\rho_2, \varepsilon)$-separated subset of $\Map_\mu(\rho, G, L, \delta, \sigma)$ of maximal cardinality.
Then there is a $(\rho_2, \varepsilon)$-separated subset $W$ of $D$ with $M_1M_2|W|\ge |D|$ such that the collection $(Y'_{1, \varphi}, \dots, Y'_{n, \varphi})$ is the same, say $(Y'_1, \dots, Y'_n)$, for every $\varphi \in W$.
Note that the elements of $W$ are all equal on $\bigcup_{1\le k\le n}\sigma(G)Y'_k$. By our choice of $\delta$,
\begin{align*}
\frac{1}{d}\bigg|\bigcup_{1\le k\le n}\sigma(G)Y'_k \bigg|&=\frac{1}{d}\sum_{1\le k\le n}|G/G_k|\cdot |Y'_k|
\overset{(\ref{E-finite upper bound})}\ge \sum_{1\le k\le n}(c_k-(2+|G|)|G/G_k|\delta )\\
&\ge  \sum_{1\le k\le n}(c_k-|G|(2+|G|)\delta )=\mu(GZ')-n|G|(2+|G|)\delta > 1-\varepsilon^2.
\end{align*}
It follows that any two elements of $W$ have $\rho_2$-distance less than $\varepsilon$.
Thus $|W|\le 1$. Therefore
\begin{align*}
N_{\varepsilon}(\Map_\mu(\rho, G, L, \delta, \sigma), \rho_2)&=|D|\le M_1M_2\\
&\le b\exp\bigg(\bigg(\kappa+\sum_{k=1}^nc_k\log |G/G_k|\bigg) \frac{d}{|G|}\bigg)\\
&\hspace*{25mm}\ \times (2\tau d)^n\exp\bigg(\bigg(\sum_{k=1}^n\xi(c_k)
+2\kappa\bigg)\frac{d}{|G|}\bigg)\\
&\le b(2\tau d)^n \exp\bigg(\bigg(3\kappa+|G|^{-1}\sum_{z\in GZ'}\xi(\mu(\{z\}))\bigg)d\bigg)\\
&\le b(2\tau d)^n\exp((3\kappa+h_{\mu}(X, G))d).
\end{align*}
It follows that
\[
h^{\varepsilon}_{\Sigma, \mu ,2}(\rho, G, L, \delta)\le 3\kappa+h_{\mu}(X, G).
\]
\end{proof}

\begin{lemma} \label{L-finite lower bound}
Let $G$ be a finite group acting on a standard probability space $(X,\mu )$
by measure-preserving transformations.
Then
\[
h_{\Sigma ,\mu} (X,G) \ge h_\mu (X,G) .
\]
\end{lemma}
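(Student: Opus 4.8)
The plan is to prove the lower bound $h_{\Sigma,\mu}(X,G)\ge h_\mu(X,G)$ for finite $G$ by an explicit counting construction that mirrors the structure of Lemma~\ref{L-finite upper bound}, but now building many maps rather than bounding their number. As in the upper bound, I may assume $h_\mu(X,G)<+\infty$, so there is a $G$-invariant countable set $Z$ with $\sum_{z\in Z}\mu(\{z\})=1$ and $h_\mu(X,G)=|G|^{-1}\sum_{z\in Z}\xi(\mu(\{z\}))$ where $\xi(t)=-t\log t$. I would realize $X$ as a compact metrizable space in which every point of $Z$ is isolated, with $G$ acting continuously and $\mu$ invariant, and fix a compatible metric $\rho$ with $\diam_\rho(X)\le 1$. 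By Proposition~\ref{P-measure entropy} it then suffices to produce, for each small $\kappa>0$ and each $\varepsilon>0$, a lower bound $h^\varepsilon_{\Sigma,\mu,2}(\rho,G,L,\delta)\ge h_\mu(X,G)-c\kappa$ for a suitable finite $L\subseteq C(X)$ and $\delta>0$.

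\textbf{The construction.} First I would pick a finite $G$-invariant subset $GZ'$ with $Z'=\{z_1,\dots,z_n\}$ carrying almost all the mass, set $c_k=\mu(Gz_k)$ and $G_k=\{s\in G:sz_k=z_k\}$, and take $L=\{p_1,\dots,p_n\}$ to be the characteristic functions of the points $z_k$. Applying Lemma~\ref{L-Rokhlin2} (with $\ell=1$, $F_1=G$, $\tau$ and $\eta$ small), for a good enough sofic approximation $\sigma:G\to\Sym(d)$ I obtain a set $C\subseteq\{1,\dots,d\}$ on which $(s,c)\mapsto\sigma_s(c)$ is bijective from $G\times C$ onto $\sigma(G)C$, with $|\sigma(G)C|/d\ge 1-\delta$ and the multiplicativity relations $\sigma_e(c)=c$, $\sigma_{st}(c)=\sigma_s\sigma_t(c)$ holding on $C$. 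The idea is to distribute the orbit-clusters $C$ among the points of $Z'$: choose a partition $\{C_k\}_{k=1}^n$ of (most of) $C$ with $|C_k|/|C|\approx c_k$, and for each assignment define a map $\varphi$ by sending the orbit $\sigma(G)c$ of each $c\in C_k$ onto the $G$-orbit of $z_k$ in the equivariant way dictated by the stabilizer $G_k$. Each such $\varphi$ will lie in $\Map_\mu(\rho,G,L,\delta,\sigma)$: approximate equivariance holds because the $\sigma$-orbits are genuine $G$-orbits on $C$, and the pushforward condition $|(\varphi_*\zeta)(p_k)-\mu(p_k)|<\delta$ holds because $|C_k|/|C|\approx c_k$ and $C$ nearly exhausts $\{1,\dots,d\}$.

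\textbf{Counting and separation.} The number of distinct partitions $\{C_k\}$ with the prescribed approximate proportions is, by Stirling, roughly $\exp\big(|C|\sum_k\xi(c_k)\big)$, and since $|C|\approx d/|G|$ this is $\exp\big((h_\mu(X,G)-O(\kappa))d\big)$. The remaining point is that distinct assignments give $(\rho_2,\varepsilon)$-separated maps: if two partitions differ, then some $c$ is sent to different points of $GZ'$ under the two maps, and because the points of $GZ'$ are $\sqrt{\delta}$-separated in $\rho$ (by the choice of $\delta$) and such $c$ occupy a definite fraction of $\{1,\dots,d\}$, the resulting $\rho_2$-distance exceeds $\varepsilon$. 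To make the separation robust I would only count assignments that already differ in the coarser data of which points of $\{1,\dots,d\}$ map to which orbit, so that the $\rho_2$-distance is controlled from below by the measure of the symmetric difference. This yields $N_\varepsilon(\Map_\mu(\rho,G,L,\delta,\sigma),\rho_2)\ge\exp\big((h_\mu(X,G)-O(\kappa))d\big)$, giving the desired bound after letting $\kappa\to0$. The main obstacle I anticipate is the separation estimate: one must verify carefully that differing assignments remain $\varepsilon$-separated in $\rho_2$ rather than merely in $\rho_\infty$, which forces the counting to track orbit-level differences of positive density rather than pointwise differences, and to reconcile the approximate proportions $|C_k|/|C|\approx c_k$ with the exact combinatorial count so that no entropy is lost in passing from $\sum_k\xi(c_k)$ to the true value of $h_\mu(X,G)$.
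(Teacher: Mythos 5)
Your proposal has two genuine gaps, one of which removes an entire case of the lemma. First, the opening reduction ``I may assume $h_\mu(X,G)<+\infty$'' is not available here: that reduction is legitimate for the \emph{upper} bound (where an infinite right-hand side makes the inequality trivial), but for the lower bound the case $h_\mu(X,G)=+\infty$ is exactly the case that must be worked for. Since $G$ is finite, $h_\mu(X,G)=+\infty$ precisely when $\sum_{x\in X}\mu(\{x\})<1$, i.e.\ when $\mu$ has a non-atomic part, and then there is no $G$-invariant countable $Z$ carrying full mass, so your entire construction (isolated atoms $z_k$, characteristic functions $p_k$, a finite $Z'$ with almost all the mass) does not get off the ground. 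The paper spends roughly half of its proof on this case: it models the non-atomic part of a fundamental domain as an interval $[2k,t_k]$, places $|C_{k,0}|$ nearly equally spaced points $x_{k,j}$ in it, and shows that the maps obtained by permuting these points within blocks give at least $(N!)^{|C_{1,0}|/N}$ elements of $\Map_\mu(\rho,G,L,\delta,\sigma)$ that are $(\rho_\infty,\varepsilon)$-separated, whence $h_{\Sigma,\mu,\infty}(\rho)=+\infty$ after letting $N\to\infty$. Some argument of this kind is indispensable.

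Second, in the purely atomic case your count undershoots the entropy whenever some atom has a proper stabilizer. With $c_k=\mu(Gz_k)$ and $G_k$ the stabilizer of $z_k$, one has
\[
h_\mu(X,G)=\frac{1}{|G|}\sum_{z\in Z}\xi(\mu(\{z\}))=\frac{1}{|G|}\sum_k\big(\xi(c_k)+c_k\log|G/G_k|\big)
\]
(up to the tail of $Z\setminus Z'$), whereas counting only the partitions $\{C_k\}$ of $C$ with $|C_k|/|C|\approx c_k$ yields $\exp\big(|C|\sum_k\xi(c_k)\big)\approx\exp\big(\tfrac{d}{|G|}\sum_k\xi(c_k)\big)$, which misses the term $\tfrac{1}{|G|}\sum_k c_k\log|G/G_k|$. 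There is no single ``equivariant way dictated by the stabilizer'': for each $c\in C_k$ there are $|G/G_k|$ distinct equivariant surjections $\sigma(G)c\to Gz_k$ (one for each choice of which $G_k$-coset of $\sigma(G)c$ lands on $z_k$), contributing the factor $\prod_k|G/G_k|^{|C_k|}$ — the quantity $M_1$ from the upper-bound proof, which the paper's outline explicitly says must be recovered alongside $M_2$. Moreover, your plan to ``only count assignments that already differ in the coarser data of which points map to which orbit'' discards precisely these coset choices, so as written your lower bound is $|G|^{-1}\sum_k\xi(c_k)$, which is strictly smaller than $h_\mu(X,G)$ unless every atom is a fixed point. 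The separation of maps differing in coset data can be handled the same way as for partition data (extract a subfamily in which any two maps differ on a positive density of indices), so the fix is to include $M_1$ in the count, not to avoid it. Your restriction to positive-density differences to pass from mere distinctness to $\rho_2$-separation is otherwise the right idea.
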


\begin{proof}
List the subgroups of $G$ as $H_1, \dots, H_{\ell}$. For $x\in X$ we write $G_x$ for $\{g\in G: gx=x\}$.

Since $G$ is finite, we can find a measurable subset $Y$ of $X$ such that $|Y\cap Gx|=1$ for every $x\in X$
\cite[Ex.\ 6.1 and Prop.\ 6.4]{KM}. For every $k=1,\dots ,\ell$ set $Y_k=\{x\in Y: G_x=H_k\}$.
We may add or remove a measure zero subset of $Y_k$ without changing either $h_{\Sigma, \mu}(X, G)$ or $h_\mu(X, G)$.
Then we may identify $Y_k$ with a closed subset of the interval $[2k, 2k+1]$
in such a way that there exist $2k\le t_k\le 2k+1$
and $2k+1\ge a_{k, 1}>a_{k, 2}>\dots\ge  t_k$ with $Y_k=[2k, t_k]\cup\{a_{k, 1}, a_{k, 2}, \dots\}$, $\mu(E)$ is the Lebesgue measure of $E$ for every Borel $E\subseteq [2k, t_k]$, and $\mu(a_{k, n})>0$ for every $n$ \cite[Thm.\ 17.41]{Kechris}.
Here we allow the set $\{a_{k,1}, a_{k, 2}, \dots\}$ to be finite or even empty. Reordering $H_1, \dots, H_\ell$ if necessary, we may assume that there is some $0\le \ell'\le \ell$ such that
$t_k>2k$ for all $1\le k\le \ell'$ and $t_k=2k$ for all $\ell'<k\le \ell$.

Now we may identify $X$ with the disjoint union $\bigsqcup_{k=1}^\ell Y_k\times (G/H_k)$ in a natural way. Equip $\bigsqcup_{k=1}^\ell Y_k\times (G/H_k)$
with its natural topology coming from the product topology of $Y_k\times (G/H_k)$. Then $G$ acts continuously
on the compact metrizable space $X$. To simply the notation, we shall also identify $Y_k$ with $Y_k \times \{eH_k\}\subseteq X$, and hence think of $Y$ as a subset of $X$.
Let $\rho$ be a compatible metric on $X$ such that $\rho(x, y)=|x-y|$ for all $x, y\in Y$.
By Proposition~\ref{P-measure entropy} one has  $h_{\Sigma, \mu}(X, G)=h_{\Sigma, \mu ,\infty}(\rho)$.

We consider first the case $t_k>2k$ for some $1\le k\le \ell$, i.e. $\ell'\neq 0$. In this case we will
show that $h_{\Sigma, \mu ,\infty}(\rho)=+\infty$.
Let $N\in \Nb$. Take an $\varepsilon > 0$ such that $\varepsilon <(t_1-2)/N$. Let $L$ be a finite subset of
$C(X)$ and let $\delta>0$.

Let $\eta$ be a strictly positive number satisfying $\eta<\min_{1\le k\le \ell'}\mu(G[2k, t_k])/2$ to
be further specified in a moment.
By Lemma~\ref{L-Rokhlin2}, when a map $\sigma$ from $G$ to $\Sym(d)$ for some $d\in \Nb$ is a
good enough sofic approximation for $G$, there exists a subset $C$ of $\{1, \dots, d\}$ such
that the map $(s, c)\mapsto \sigma_s(c)$ from $G\times C$ to $\sigma(G)C$ is bijective,
$|\sigma(G)C|/d\ge 1-\eta$, and $\sigma_e(c)=c$ and
$\sigma_s\sigma_t(c)=\sigma_{st}(c)$ for all $c\in C$ and $s, t\in G$.

For each $k=1,\dots ,\ell$ take an $n_k\in \Nb\cup \{0\}$ such that the points $a_{k, 1}, \dots, a_{k, n_k}$ are defined and $\sum_{k=1}^\ell \sum_{j>n_k}\mu(Ga_{k, j})<\eta$. Denote by $\Lambda$ the set of $(k, j)$ such that either
$1\le k\le \ell'$ and $j=0$ or $1\le k\le \ell$ and $1\le j\le n_k$.
Note that $|C|\to +\infty$ as $d\to +\infty$. Thus when $d$ is large enough we can find a partition
$\{C_{k, j}\}_{(k, j)\in \Lambda}$ of $C$
with $\sum_{k=1}^{\ell'}||C_{k, 0}|/|C|-\mu(G[2k, t_k])|<\eta$ and $\sum_{k=1}^\ell\sum_{j=1}^{n_k}||C_{k, j}|/|C|-\mu(Ga_{k, j})|<\eta$.
Set $x_{k, j}=2k+j(t_k-2k)/|C_{k, 0}|$ for $1\le k\le \ell'$ and $1\le j\le |C_{k, 0}|$.
For each $h=(h_k)_{k=1}^{\ell'}$ consisting of a bijection from $C_{k, 0}$ to
$\{x_{k, j}: 1\le j\le |C_{k, 0}|\}$ for each $1\le k\le \ell'$,
we take a map $\varphi_h: \{1, \dots, d\}\rightarrow X$ sending $sc$ to $s(h_k(c))$
for $1\le k\le \ell'$, $c\in C_{k, 0}$, and $s\in G$,
and sending $sc$ to $sa_{k, j}$ for $1\le k\le \ell$, $c\in C_{k, j}$, and $s\in G$.
It is readily checked that, when $\eta$ is small enough and $d$ is large enough, every such $\varphi_h$ belongs to
$\Map_\mu(\rho, G, L, \delta, \sigma)$.

Note that $\rho(x_{1, j}, x_{1, j'})>\varepsilon$ for any $1\le j, j'\le |C_{1,0}|$ with $|j-j'|\ge |C_{1, 0}|/N$.
When $d$ is large enough, we may require that $N$  divides $|C_{1, 0}|$.
Denote by $\Gamma$ the set of permutations of $\{x_{1, j}: 1\le j\le |C_{1, 0}|\}$ preserving the subset $\{x_{1, j+k|C_{1, 0}|/N}: 0\le k<N\}$ for each $1\le j\le |C_{1, 0}|/N$. Fix one $h$ as above. For each $\gamma \in \Gamma$, set $h_{\gamma}=(\gamma \circ h_1, h_2, \dots, h_{\ell'})$.
Then the set $\{\varphi_{h_{\gamma}}: \gamma \in \Gamma\}$ is $(\rho_\infty, \varepsilon)$-separated.
Therefore
\[
N_{\varepsilon}(\Map_\mu(\rho, G, L, \delta, \sigma), \rho_\infty)\ge |\Gamma|=(N!)^{|C_{1, 0}|/N}.
\]
It follows that
\begin{align*}
\liminf_{i\to\infty} \frac{1}{d_i} \log N_{\varepsilon} (\Map_\mu (\rho, G, L, \delta, \sigma_i ),\rho_\infty )
&\ge \frac{(\mu(G[2, t_1])-\eta)(1-\eta)}{|G|N}\log (N!)\\
&\ge \frac{\mu(G[2, t_1])(1-\mu(G[2, t_1])/2)}{2|G|N}\log (N!).
\end{align*}
Since $N$ can be taken to be arbitrarily large, we conclude that
$h_{\Sigma, \mu ,\infty}(\rho)=\sup_{\varepsilon>0} h_{\Sigma, \mu ,\infty}^{\varepsilon}(\rho)=+\infty$.

In the case that $X$ is atomic, i.e. $\ell'=0$, one can see how to proceed by rewinding through the proof
of Lemma~\ref{L-finite upper bound}. We will simply outline the argument and leave the details to the reader.
The goal is to construct sufficiently many
approximately equivariant maps from a given sofic approximation space into $X$ in order to get the desired lower
bound for the sofic measure entropy.
Such a map is constructed as follows. Fixing a partition of $X$ into orbits, if the action is free
then we can pair off each base point from a finite collection of orbits
with sets of base points of the decomposition of a fixed sofic approximation as given by Lemma~\ref{L-Rokhlin},
subject to the requirement that the measures approximately match up.
If the action is not free then the components of the sofic approximation decomposition must be
further partitioned as necessary by means of cosets in order to enable the pairing off with base points in $X$
having nontrivial isotropy subgroup. The choices involved in this pairing procedure are controlled, up to some error,
by the product of the quantities $M_1$ and $M_2$ as in the proof of Lemma~\ref{L-finite upper bound}.
From this we obtain the desired lower bound.
\end{proof}

\begin{theorem} \label{T-measure sofic equal classical}
Let $G$ be an amenable countable discrete group acting on a standard probability space $(X,\mu )$
by measure-preserving transformations.
Let $\Sigma$ be a sofic approximation sequence for $G$. Then
\[
h_{\Sigma ,\mu} (X,G) = h_\mu (X,G) .
\]
\end{theorem}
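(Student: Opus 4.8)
The plan is to assemble the theorem from the four main lemmas already established, after first reducing to a convenient topological model. Since $h_{\Sigma ,\mu}(X,G)$ is a measure conjugacy invariant \cite{KerLi10} and every measure-preserving action of a countable group on a standard probability space admits a topological model---a compact metrizable space carrying a continuous action with an invariant Borel probability measure---I may assume throughout that $X$ is such a space, that the action is continuous, and that $\mu$ is $G$-invariant and Borel. Fix a compatible metric $\rho$ on $X$; being a genuine metric, $\rho$ is automatically dynamically generating (take $s=e$ in the defining condition), so Proposition~\ref{P-measure entropy} applies and gives $h_{\Sigma ,\mu}(X,G) = h_{\Sigma ,\mu ,\infty}(\rho)$. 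The argument then splits according to whether $G$ is finite or countably infinite, these being handled by disjoint sets of lemmas because the infinite-case arguments rely on the Shannon--McMillan theorem and the ergodic decomposition of entropy, both of which presuppose $G$ infinite.

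For $G$ countably infinite I would combine Lemmas~\ref{L-supremum upper bound} and \ref{L-lower bound}. The lower bound $h_{\Sigma ,\mu ,\infty}(\rho)\ge h_\mu(X,G)$ is exactly Lemma~\ref{L-lower bound}. For the matching upper bound, fix $\varepsilon>0$; by compactness of $X$ there is a finite Borel partition $\cQ$ of $X$ with $\max_{Q\in\cQ}\diam_\rho(Q)<\varepsilon/16$, and Lemma~\ref{L-supremum upper bound} then yields $h^{\varepsilon}_{\Sigma ,\mu ,\infty}(\rho)\le h_\mu(\cQ)\le h_\mu(X,G)$, the last inequality being immediate from the definition of $h_\mu(X,G)$ as a supremum over finite partitions. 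Taking the supremum over $\varepsilon>0$ gives $h_{\Sigma ,\mu ,\infty}(\rho)\le h_\mu(X,G)$, and combining with the lower bound completes this case.

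For $G$ finite the situation is simpler: Lemmas~\ref{L-finite upper bound} and \ref{L-finite lower bound} are stated directly for arbitrary standard probability spaces and together give $h_{\Sigma ,\mu}(X,G)=h_\mu(X,G)$ without any further reduction. Since an amenable countable discrete group is either finite or countably infinite, the two cases exhaust all possibilities and the theorem follows.

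The genuinely hard work lies entirely inside the four lemmas, so at the level of the theorem the only point requiring real care is the passage from the partition-indexed estimate of Lemma~\ref{L-supremum upper bound} to the global bound $h_{\Sigma ,\mu ,\infty}(\rho)\le h_\mu(X,G)$: one must choose, for each fixed scale $\varepsilon$, a partition fine enough (diameters below $\varepsilon/16$) to feed into the lemma, and only then let $\varepsilon\to 0$. I expect no obstacle here beyond the observation that $h_\mu(\cQ)$ is dominated by $h_\mu(X,G)$ \emph{uniformly} in $\cQ$, which is precisely what decouples the choice of partition from the limit in $\varepsilon$.
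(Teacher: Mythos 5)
Your proposal is correct and follows essentially the same route as the paper's own proof: reduce to a topological model, dispose of the finite case via Lemmas~\ref{L-finite upper bound} and \ref{L-finite lower bound}, and for infinite $G$ combine Lemma~\ref{L-lower bound} with Lemma~\ref{L-supremum upper bound} through Proposition~\ref{P-measure entropy}, choosing for each $\varepsilon$ a partition of $\rho$-diameter below $\varepsilon/16$. The paper leaves the $\varepsilon$-by-$\varepsilon$ bookkeeping for the upper bound implicit, but your spelled-out version is exactly what is intended.
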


\begin{proof}
By Lemmas~\ref{L-finite upper bound} and \ref{L-finite lower bound}, we may assume that $G$ is infinite.
Since $(X,\mu )$ is standard, up to measure conjugacy
we may assume that $X$ is a compact metrizable space on which $G$ acts
continuously and $\mu$ is a $G$-invariant Borel probability measure on $X$.
Then $h_{\Sigma ,\mu} (X,G) \geq h_\mu (X,G)$ by Lemma~\ref{L-lower bound} and Proposition~\ref{P-measure entropy},
while the reverse inequality follows from
Lemma~\ref{L-supremum upper bound} and
Proposition~\ref{P-measure entropy}.
\end{proof}

We remark that, when the action is ergodic, Theorem~\ref{T-measure sofic equal classical} follows from
Theorem~\ref{T-amenable}, the Jewett-Krieger theorem for actions of amenable groups \cite{Rosenthal}, and the variational
principle \cite[Thm.\ 5.2.7]{Mou85}\cite[Thm.\ 6.1]{KerLi10}. 
Note however that much of the complication of the proof of Theorem~\ref{T-measure sofic equal classical} 
is due to the fact that we are not assuming the action to be ergodic.

%

\end{document}